\numberwithin{equation}{section}
\newcommand{\N}{\mathbb{N}}
\newcommand{\R}{\mathbb{R}}
\newcommand{\E}{\mathbb{E}}
\newcommand{\Prob}{\mathbb{P}}
\newcommand{\G}{\mathbb{G}}
	\def\MR#1{}
\newcommand{\pnorm}[2]{\lVert#1\rVert_{#2}}
\newcommand{\bigpnorm}[2]{\big\lVert#1\big\rVert_{#2}}
\newcommand{\biggpnorm}[2]{\bigg\lVert#1\bigg\rVert_{#2}}
\newcommand{\abs}[1]{\lvert#1\rvert}
\newcommand{\bigabs}[1]{\big\lvert#1\big\rvert}
\newcommand{\biggabs}[1]{\bigg\lvert#1\bigg\rvert}
\renewcommand{\epsilon}{\varepsilon}
\renewcommand{\d}[1]{\mathrm{d}#1}
\renewcommand{\hat}{\widehat}
\renewcommand{\tilde}{\widetilde}
\newcommand{\beq}{\begin{equation}}
\newcommand{\eeq}{\end{equation}}
\newcommand{\beqa}{\begin{equation} \begin{aligned}}
\newcommand{\eeqa}{\end{aligned} \end{equation}}
\newcommand{\beqas}{\begin{equation*} \begin{aligned}}
\newcommand{\eeqas}{\end{aligned} \end{equation*}}
\newcommand{\bit}{\begin{itemize}}
	\newcommand{\eit}{\end{itemize}}
\newcommand{\bmat}{\begin{bmatrix}}
	\newcommand{\emat}{\end{bmatrix}}
\theoremstyle{definition}\newtheorem{problem}{Problem}[section]
\theoremstyle{definition}\newtheorem{definition}[problem]{Definition}
\theoremstyle{remark}\newtheorem{assumption}{Assumption}
\theoremstyle{remark}\newtheorem{remark}[problem]{Remark}
\theoremstyle{definition}
\theoremstyle{plain}\newtheorem{theorem}[problem]{Theorem}
\theoremstyle{plain}
\theoremstyle{plain}\newtheorem{lemma}[problem]{Lemma}
\theoremstyle{plain}\newtheorem{proposition}[problem]{Proposition}
\theoremstyle{plain}\newtheorem{corollary}[problem]{Corollary}
\theoremstyle{plain}
\begin{document}

\title[Multiplier U-Processes]{Multiplier U-processes: sharp bounds and applications}
\thanks{The research of Q. Han is partially supported by NSF Grant DMS-1916221. }

\author[Q. Han]{Qiyang Han}

\address[Q. Han]{
	Department of Statistics, Rutgers University, Piscataway, NJ 08854, USA.
}
\email{qh85@stat.rutgers.edu}

\date{\today}

\keywords{$U$-process, multiplier inequality, bootstrap central limit theorem, bootstrap $M$-estimators, complex sampling design}
\subjclass[2000]{60F17, 62E17}
\maketitle

\begin{abstract}
The theory for multiplier empirical processes has been one of the central topics in the development of the classical theory of empirical processes, due to its wide applicability to various statistical problems. In this paper, we develop theory and tools for studying multiplier $U$-processes, a natural higher-order generalization of the multiplier empirical processes. To this end, we develop a multiplier inequality that quantifies the moduli of continuity of the multiplier $U$-process in terms of that of the (decoupled) symmetrized $U$-process. The new inequality finds a variety of applications including (i) multiplier and bootstrap central limit theorems for $U$-processes, (ii) general theory for bootstrap $M$-estimators based on $U$-statistics, and (iii) theory for $M$-estimation under general complex sampling designs, again based on $U$-statistics.
\end{abstract}

\section{Introduction}
\subsection{Overview}
Let $X_1,\ldots,X_n$ be i.i.d. random variables with law $P$ on $(\mathcal{X},\mathcal{A})$, and $\xi_1,\ldots,\xi_n$ be random variables independent of $X_1,\ldots,X_n$. Multiplier empirical processes of the form
\begin{align}\label{def:multiplier_empirical_process}
f \mapsto \sum_{i=1}^n \xi_i f(X_i),
\end{align}
where $f \in \mathcal{F}$ for some function class $\mathcal{F}$, have a long history in the development of the classical empirical process theory \cite{van1996weak,ledoux2013probability}. See also \cite{mendelson2016upper,mendelson2016multiplier} for some recent developments. Apart from being of theoretical interest in its own right, the multiplier empirical process has also found numerous important applications in the statistical theory. Here is a partial list:
\begin{enumerate}
	\item[(P1)] (\emph{Bootstrap theory}). The seminal paper \cite{gine1990bootstrapping} gives sufficient and necessary characterizations for the bootstrap central limit theorem to hold uniformly over a class of functions $\mathcal{F}$. The key idea there is to view the bootstrap empirical process as certain (conditional) multiplier empirical process. This idea is further exploited in \cite{praestgaard1993exchangeably} to general bootstrap schemes with exchangeable weights.
	\item[(P2)] (\emph{Estimation theory}). The bootstrap (multiplier empirical) theory in (P1) can be combined with $M$- or $Z$-estimation theory to study asymptotic properties of bootstrap finite-dimensional parameters: the paper \cite{wellner1996bootstrapping} studied bootstrap $Z$-estimators; the paper \cite{cheng2010bootstrap} further studied bootstrap $M$-estimators in a semiparametric setting. In an infinite-dimensional setting, the multiplier empirical process naturally arises in the theory for regression estimators, where the multipliers play the role of the errors in the regression model, cf. \cite{han2017sharp}.

	\item[(P3)] (\emph{Complex sampling}). \cite{breslow2007weighted} pioneered the study of empirical process theory under two-phase stratified sampling by  using the exchangeably weighted bootstrap empirical process theory developed in \cite{praestgaard1993exchangeably}. Therefore the crux of problem rests in suitable form of the multiplier empirical process theory. 	
\end{enumerate}

As a natural analogue of the empirical process in a higher-order setting, $U$-process (of order $m$) of the form
\begin{align}\label{def:U_process}
f \mapsto \sum_{1\leq i_1<\ldots<i_m\leq n} f(X_{i_1},\ldots,X_{i_m})
\end{align}
received considerable attention during the late 1980s and early 1990s due to its wide applications to the statistical theory, see e.g. \cite{arcones1993limit,arcones1994Uprocesses,arcones1994central,arcones1994estimators,nolan1987Uprocesses,nolan1988functional}. On the other hand, despite notable progress of the theory for the multiplier empirical process (\ref{def:multiplier_empirical_process}) and its applications thereof, corresponding theory for multiplier $U$-processes of the form
\begin{align}\label{def:multiplier_U_process}
f \mapsto \sum_{1\leq i_1<\ldots<i_m\leq n} \xi_{i_1}\cdots \xi_{i_m}f(X_{i_1},\ldots,X_{i_m}),
\end{align}
has been lacking. Not surprisingly, the lack of such a theory has hindered further theoretical understanding for various statistical problems (in particular the above (P1)-(P3)) that involve $U$-statistics. One of the goals of this paper is to fill in this lack of understanding, by developing further theory and tools for understanding multiplier $U$-processes (\ref{def:multiplier_U_process}), along with a particular eye on applications to the aforementioned statistical problems.

It has now been clear from the author's previous work \cite{han2017sharp,han2017robustness,han2018complex} that the key step in getting a precise understanding of the behavior of the multiplier empirical process (\ref{def:multiplier_empirical_process}) is a strong form of the so-called `multiplier inequality' that quantifies the moduli of continuity of the multiplier empirical process in terms of that of the empirical process itself, or its symmetrized equivalent
\begin{align}
f \mapsto \sum_{i=1}^n \epsilon_i f(X_i),
\end{align}
in a non-asymptotic setting. Here $\epsilon_i$'s are i.i.d. Rademacher random variables (i.e. $\Prob(\epsilon_i=\pm 1)=1/2$) independent of $X_1,\ldots,X_n$. Indeed, an improved version of the multiplier inequality is proved in \cite{han2017sharp} that gives sharp non-asymptotic bounds for the moduli of the multiplier empirical process. The benefits of such a sharp multiplier inequality are exploited in a few different problems, including (i) convergence rates of least squares regression estimators in a heavy-tailed regression setting under various models \cite{han2017sharp,han2017robustness}; (ii) empirical process theory under general complex sampling designs \cite{han2018complex}, and more technically, (iii) theory for lower bounds of empirical processes through Gaussianization \cite{han2019global}.

This is the program we will continue in this paper, but now in the setting of multiplier $U$-process (\ref{def:multiplier_U_process}). We prove in Theorem \ref{thm:multiplier_ineq} a sharp multiplier inequality used to study the moduli of continuity of the multiplier $U$-process (\ref{def:multiplier_U_process}), in terms of that of the (decoupled) symmetrized $U$-process
\begin{align}\label{def:symmetrized_U_process}
f \mapsto \sum_{1\leq i_1<\ldots<i_m\leq n} \epsilon_{i_1}^{(1)}\cdots \epsilon_{i_m}^{(m)} f(X_{i_1}^{(1)},\ldots,X_{i_m}^{(m)}),
\end{align}
an object that has been well-studied throughout a series of ground-breaking works in the 1990s, cf. \cite{arcones1993limit,arcones1994Uprocesses,de2012decoupling}.

With the help of the multiplier inequality for the multiplier $U$-process (\ref{def:multiplier_U_process}), we further study the statistical problems in the directions (P1)-(P3) mentioned above, in which various forms of $U$-statistics are involved. More specifically:
\begin{enumerate}
	\item[(Q1)] We prove a multiplier central limit theorem and a bootstrap central limit theorem with general exchangeable weights for $U$-processes, in analogy to the duality between the multiplier central limit theorem for empirical processes developed in \cite{gine1984some,gine1986lectures} and the exchangeably weighted bootstrap theory for empirical processes developed in \cite{praestgaard1993exchangeably}. These uniform central limit theorems are valid under the same conditions for the usual uniform CLTs for $U$-processes as developed in \cite{arcones1993limit,de2012decoupling}, and similar conditions on the exchangeable weights as in \cite{praestgaard1993exchangeably}.
	\item[(Q2)] We develop a general theory for bootstrap $M$-estimators based on $U$-statistics, continuing the line of works pioneered by \cite{wellner1996bootstrapping} in the context of $Z$-estimation based on the empirical measure, and by \cite{cheng2010bootstrap} in the context of $M$-estimation in a semi-parametric setting. See also \cite{bose2001generalised,bose2003generalized,chatterjee2005generalized}. The bootstrap theory is valid under essentially the same conditions as the master asymptotic normality theorem as studied in \cite{arcones1994estimators,de2012decoupling}, and therefore validates the use of bootstrap $M$-estimators based on $U$-statistics, at least asymptotically.
	\item[(Q3)] We develop a general $M$-estimation theory based on $U$-statistics under general complex sampling designs. Our theory generalizes the work of \cite{arcones1994estimators,de2012decoupling} concerning finite-dimensional $M$-estimators based on $U$-statistics, and the work of \cite{clemencon2008ranking} concerning infinite-dimensional $M$-estimators based on $U$-statistics, by going beyond the standard setting of i.i.d. sampling design. The theory here can also be viewed as an extension of the author's previous work \cite{han2018complex}, in which a general empirical process theory for various sampling designs is developed with the help of the improved multiplier inequality for empirical processes proved in \cite{han2017sharp}.
\end{enumerate}

The rest of the paper is organized as follows. We develop the multiplier inequality for $U$-processes in Section \ref{section:multiplier_ineq}. Sections \ref{section:multiplier_CLT_bootstrap}-\ref{section:complex_sampling_M} are devoted to applications (Q1)-(Q3). Most detailed proofs are collected in Sections \ref{section:proof_1}-\ref{section:proof_auxiliary_results}.

\subsection{Notation}\label{section:notation}
For a real-valued random variable $\xi$ and $1\leq p<\infty$, let $\pnorm{\xi}{p} \equiv \big(\E\abs{\xi}^p\big)^{1/p} $ denote the ordinary $p$-norm. The $L_{p,1}$ `norm' for a random variable $\xi$ is defined by 
\begin{align*}
\pnorm{\xi}{p,1}\equiv\int_0^\infty {\Prob(\abs{\xi}>t)}^{1/p}\ \d{t}.
\end{align*}
Strictly speaking $\pnorm{\cdot}{p,1}$ is not a norm, but there exists a norm equivalent to $\pnorm{\cdot}{p,1}$ (cf.  \cite[Theorem 3.21]{stein1971introduction}). Let $L_{p,1}$ be the space of random variables $\xi$'s with $\pnorm{\xi}{p,1}<\infty$. It is well known that $L_{p+\epsilon}\subset L_{p,1}\subset L_{p}$ holds for any underlying probability measure, and hence a finite $L_{p,1}$ condition requires slightly more than a $p$-th moment, but no more than any $p+\epsilon$ moment, see \cite[Chapter 10]{ledoux2013probability}.

For a real-valued measurable function $f$ defined on $(\mathcal{X},\mathcal{A},P)$, $\pnorm{f}{L_p(P)}\equiv \pnorm{f}{P,p}\equiv \big(P\abs{f}^p)^{1/p}$ denotes the usual $L_p$-norm under $P$, and $\pnorm{f}{\infty}\equiv \sup_{x \in \mathcal{X}} \abs{f(x)}$. $f$ is said to be $P$-centered if $Pf=0$, and $\mathcal{F}$ is $P$-centered if all $f \in \mathcal{F}$ are $P$-centered. To avoid unnecessary measurability digressions, we will assume that $\mathcal{F}$ is countable throughout the article. As usual, for any $\phi:\mathcal{F}\to \R$, we write $\pnorm{\phi(f)}{\mathcal{F}}$ for $ \sup_{f \in \mathcal{F}} \abs{\phi(f)}$.

Let $(\mathcal{F},\pnorm{\cdot}{})$ be a subset of the normed space of real functions $f:\mathcal{X}\to \R$. For $\epsilon>0$ let $\mathcal{N}(\epsilon,\mathcal{F},\pnorm{\cdot}{})$ be the $\epsilon$-covering number of $\mathcal{F}$; see \cite[page 83]{van1996weak} for more details.

Throughout the article $\epsilon_1,\ldots,\epsilon_n$ will be i.i.d. Rademacher random variables independent of all other random variables. $C_{x}$ will denote a generic constant that depends only on $x$, whose numeric value may change from line to line unless otherwise specified. $a\lesssim_{x} b$ and $a\gtrsim_x b$ mean $a\leq C_x b$ and $a\geq C_x b$ respectively, and $a\asymp_x b$ means $a\lesssim_{x} b$ and $a\gtrsim_x b$ [$a\lesssim b$ means $a\leq Cb$ for some absolute constant $C$]. For two real numbers $a,b$, $a\vee b\equiv \max\{a,b\}$ and $a\wedge b\equiv\min\{a,b\}$. $\mathcal{O}_{\mathbf{P}}$ and $\mathfrak{o}_{\mathbf{P}}$ denote the usual big and small O notation in probability.

\section{Multiplier inequality for $U$-processes}\label{section:multiplier_ineq}

Recall that a kernel $f:\mathcal{X}^m \to \R$ is (permutation) symmetric if and only if $f(x_1,\ldots,x_m)= f(x_{\pi(1)},\ldots,x_{\pi(m)})$ for any permutation $\pi$ of $\{1,\ldots,m\}$. Further recall the notion of degenerate kernels (cf. \cite[Definition 3.5.1]{de2012decoupling}) as follows.
\begin{definition}
A symmetric and $P^m$-integrable kernel $f:\mathcal{X}^m \to \R$ is $P$-degenerate of order $r-1$, notationally $f \in L_2^r(P^m)$, if and only if
\begin{align*}
\int f(x_1,\ldots,x_m)\ \d{P^{m-r+1}}(x_r,\ldots,x_m)= \int f\ \d{P^m}
\end{align*}
holds for any $x_1,\ldots,x_{r-1} \in \mathcal{X}$, and 
\begin{align*}
(x_1,\ldots,x_r)\mapsto \int f(x_1,\ldots,x_m)\ \d{P}^{m-r}(x_{r+1},\ldots,x_m) 
\end{align*}
is not a constant function.  If $f$ is furthermore $P^m$-centered, i.e. $P^m f=0$, we write $f \in L_2^{c,r}(P^m)$. For notational simplicity, we usually write $L_2^{c,m}(P^m)=L_2^{c,m}(P)$.
\end{definition}

Any $U$-statistic with a symmetric kernel $f$
\begin{align}\label{eqn:U_stat}
U_n^{(m)} (f) \equiv \frac{1}{\binom{n}{m}}\sum_{1\leq i_1<\ldots<i_m\leq n} f(X_{i_1},\ldots,X_{i_m})
\end{align}
can be decomposed into the sum of $U$-statistics with degenerate kernels:
\begin{align}\label{eqn:hoeffding_decomposition}
U_n^{(m)}(f) = \sum_{k=0}^m \binom{m}{k} U_n^{(k)}(\pi_k f).
\end{align}
Here $\pi_k f(x_1,\ldots,x_k)\equiv (\delta_{x_1}-P)\times\ldots\times (\delta_{x_k}-P)\times P^{m-k} f$ is $P$-degenerate of order $k-1$. The equation (\ref{eqn:hoeffding_decomposition}) is also known as Hoeffding decomposition. The limit behavior of (\ref{eqn:U_stat}) then typically reduces to the study of the leading non-trivial term ($k\geq 1$) in (\ref{eqn:hoeffding_decomposition}), so below we proceed without loss of generality by assuming that the kernels $f$ are $P$-degenerate of order $m-1$ unless otherwise specified.

The main result of this section is the following multiplier inequality for $U$-processes with degenerate kernels. 

\begin{theorem}\label{thm:multiplier_ineq}
Let $X_1,\ldots,X_n$ be i.i.d. random variables with law $P$ on $(\mathcal{X},\mathcal{A})$, and $\mathcal{F}$ be a class of measurable real-valued functions defined on $(\mathcal{X}^m,\mathcal{A}^m)$ such that $\mathcal{F}$ is $P$-centered and $P$-degenerate of order $m-1$. Let $(\xi_1,\ldots,\xi_n)$ be a random vector independent of $(X_1,\ldots,X_n)$. Suppose that there exists some measurable function $\psi_n: \R^m_{\geq 0}\to \R_{\geq 0}$ such that the expected supremum of the decoupled \footnote{`Here `decoupled' refers to fact that $\{X_i^{(k)}\}, k \in \N$ are independent copies of $\{X_i\}$, and $\{\epsilon_i^{(k)}\}, k\in \N$ are independent copies of the Rademacher sequence $\{\epsilon_i\}$.} $U$-processes satisfies
\begin{align*}
\E \biggpnorm{\sum_{\substack{1\leq i_k\leq \ell_k, 1\leq k\leq m}} \epsilon_{i_1}^{(1)}\cdots\epsilon_{i_m}^{(m)} f(X_{i_1}^{(1)},\ldots,X_{i_m}^{(m)})}{\mathcal{F}}\leq \psi_n(\ell_1,\ldots,\ell_m)
\end{align*}
for all $1\leq \ell_1,\ldots,\ell_m\leq n$. Then
\begin{align*}
&\E \biggpnorm{\sum_{1\leq i_1,\ldots,i_m\leq n} \xi_{i_1}\cdots\xi_{i_m} f(X_{i_1},\ldots,X_{i_m})}{\mathcal{F}} \\
&\leq K_m \int_{\R_{\geq 0}^m}    \E \psi_n\bigg(\sum_{i=1}^n \bm{1}_{\abs{\xi_i}>t_1},\ldots,\sum_{i=1}^n \bm{1}_{\abs{\xi_i}>t_m}\bigg)\ \d{t_1}\cdots\d{t_m}.
\end{align*}
Furthermore, if there exists a concave and non-decreasing function $\bar{\psi}_n:\R\to \R$ such that $\psi_n(\ell_1,\ldots,\ell_m) = \bar{\psi}_n \big(\prod_{k=1}^m \ell_k\big)$, then
\begin{align*}
&\E \biggpnorm{\sum_{1\leq i_1,\ldots,i_m\leq n} \xi_{i_1}\cdots\xi_{i_m} f(X_{i_1},\ldots,X_{i_m})}{\mathcal{F}} \\
&\leq K_m \int_{\R_{\geq 0}^m }   \bar{\psi}_n \bigg(  \sum_{1\leq i_1,\ldots,i_m\leq n}  \prod_{k=1}^m \Prob\big(\abs{\xi_{i_k}}>t_k\big)^{1/m}\bigg)\ \d{t_1}\cdots\d{t_m}.
\end{align*}
Here $K_m>0$ is a constant depending on $m$ only, and can be taken as $K_m=2^{2m}\prod_{k=2}^m (k^k-1)$ for $m\geq 2$.
\end{theorem}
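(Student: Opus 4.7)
The plan is to reduce the multiplier $U$-process to the decoupled symmetrized $U$-process via a layer-cake representation of the $\xi_i$'s, combined with a conditional symmetrization/decoupling argument tailored to $P$-degenerate kernels. First I would apply the layer-cake decomposition $\xi_i=\int_0^\infty\mathrm{sgn}(\xi_i)\mathbf{1}_{|\xi_i|>t}\d{t}$ in each factor to write
\begin{equation*}
\xi_{i_1}\cdots\xi_{i_m}=\int_{\R_{\geq 0}^m}\prod_{k=1}^m\mathrm{sgn}(\xi_{i_k})\mathbf{1}_{|\xi_{i_k}|>t_k}\d{t_1}\cdots\d{t_m}.
\end{equation*}
Substituting this into the multiplier $U$-process and pulling the $\mathcal{F}$-supremum past the Lebesgue integral via the triangle inequality yields, after taking expectations,
\begin{equation*}
\E\biggpnorm{\sum_{1\leq i_1,\ldots,i_m\leq n}\xi_{i_1}\cdots\xi_{i_m}f(X_{i_1},\ldots,X_{i_m})}{\mathcal{F}}\leq\int_{\R_{\geq 0}^m}\E\biggpnorm{\sum_{i_k\in A_k(t_k)}s_{i_1}\cdots s_{i_m}f(X_{i_1},\ldots,X_{i_m})}{\mathcal{F}}\d{t_1}\cdots\d{t_m},
\end{equation*}
where $s_i=\mathrm{sgn}(\xi_i)$ and $A_k(t_k)=\{i:|\xi_i|>t_k\}$.

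Next I would condition on the $\xi$-sequence, so the signs $s_i$ and the sets $A_k(t_k)$ are deterministic, leaving only the randomness in the $X$'s. The inner $\mathcal{F}$-supremum is then compared with the decoupled symmetrized $U$-process by invoking two tools: coordinate-wise symmetrization, exploiting $P$-degeneracy of $f$ of order $m-1$ to introduce a Rademacher sign in each coordinate at cost $2$ per step; and the de la Peña--Montgomery-Smith decoupling for degenerate symmetric $U$-statistics (cf. \cite[Theorem 3.1.1]{de2012decoupling}), iterated across the $m$ coordinates to replace the shared $X$-sequence by independent copies $X^{(1)},\ldots,X^{(m)}$ at cumulative cost $\prod_{k=2}^m(k^k-1)$. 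Careful bookkeeping delivers
\begin{equation*}
\E_X\biggpnorm{\sum_{i_k\in A_k(t_k)}s_{i_1}\cdots s_{i_m}f(X_{i_1},\ldots,X_{i_m})}{\mathcal{F}}\leq K_m\E\biggpnorm{\sum_{i_k\in A_k(t_k)}\epsilon_{i_1}^{(1)}\cdots\epsilon_{i_m}^{(m)}f(X_{i_1}^{(1)},\ldots,X_{i_m}^{(m)})}{\mathcal{F}}
\end{equation*}
with $K_m=2^{2m}\prod_{k=2}^m(k^k-1)$. By exchangeability of each $\{X_i^{(k)}\}_i$, the indices in $A_k(t_k)$ can be relabeled as $\{1,\ldots,|A_k(t_k)|\}$, which by hypothesis bounds this by $K_m\psi_n(|A_1(t_1)|,\ldots,|A_m(t_m)|)$ with $|A_k(t_k)|=\sum_i\mathbf{1}_{|\xi_i|>t_k}$. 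Taking expectation over $\xi$ then establishes the first inequality.

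For the second claim, assume $\psi_n(\ell_1,\ldots,\ell_m)=\bar\psi_n(\prod_k\ell_k)$ with $\bar\psi_n$ concave and non-decreasing. Jensen's inequality gives
\begin{equation*}
\E\,\bar\psi_n\bigg(\prod_{k=1}^m\sum_i\mathbf{1}_{|\xi_i|>t_k}\bigg)\leq\bar\psi_n\bigg(\sum_{1\leq i_1,\ldots,i_m\leq n}\Prob(|\xi_{i_1}|>t_1,\ldots,|\xi_{i_m}|>t_m)\bigg),
\end{equation*}
and H\"older's inequality applied to the product $\prod_k\mathbf{1}_{|\xi_{i_k}|>t_k}=\prod_k\mathbf{1}_{|\xi_{i_k}|>t_k}^{1/m}$ yields $\Prob(|\xi_{i_1}|>t_1,\ldots,|\xi_{i_m}|>t_m)\leq\prod_k\Prob(|\xi_{i_k}|>t_k)^{1/m}$; monotonicity of $\bar\psi_n$ then closes the argument.

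The hard part is the conditional comparison: the weights $s_{i_k}\mathbf{1}_{|\xi_{i_k}|>t_k}$ are index-dependent (after conditioning on $\xi$), so off-the-shelf decoupling and symmetrization for a fixed symmetric kernel does not apply directly; the weights must be absorbed into a generalized kernel and one must verify that $P$-degeneracy of $f$ is preserved so that the standard $U$-statistic machinery delivers the asserted constant $K_m$. Diagonal tuples in the full sum (where some $i_j=i_k$ coincide) are handled automatically, because the hypothesis on $\psi_n$ already bounds the decoupled symmetrized process over the full (not off-diagonal) index set.
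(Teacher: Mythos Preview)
Your proposal is correct and relies on the same three ingredients as the paper---de la Pe\~na decoupling (in the form allowing index-dependent kernels), coordinate-wise symmetrization via $P$-degeneracy, and a layer-cake representation---but applies them in a different order. The paper first conditions on $\xi$ and decouples/symmetrizes the full multiplier process with weights $\xi_{i_1}\cdots\xi_{i_m}$, then absorbs $\mathrm{sgn}(\xi_i)$ into the Rademachers so that only $|\xi_i|$'s remain, passes to reversed order statistics $|\xi_{(1)}|\ge\cdots\ge|\xi_{(n)}|$ by permutation invariance of the decoupled law, and finally uses the Abel identity $|\xi_{(i)}|=\sum_{\ell\ge i}(|\xi_{(\ell)}|-|\xi_{(\ell+1)}|)$---the discrete layer-cake on ordered values---to obtain a nonnegative combination of decoupled processes over rectangular blocks $\{1,\ldots,\ell_1\}\times\cdots\times\{1,\ldots,\ell_m\}$, each bounded by $\psi_n(\ell_1,\ldots,\ell_m)$; the integral form then follows by writing the Abel weights as $\int_{|\xi_{(\ell+1)}|}^{|\xi_{(\ell)}|}\d{t}$ and recognizing $\ell_k=\sum_i\mathbf{1}_{|\xi_i|>t_k}$ on the resulting domain. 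You instead layer-cake each $\xi_i$ at the outset and then decouple/symmetrize each slice, relabeling the random sets $A_k(t_k)$ afterward via exchangeability of the decoupled sequences. The paper's ordering is marginally cleaner because after symmetrization the only remaining index dependence is through scalar weights $|\xi_{(i)}|$ and the rectangular blocks arise automatically, whereas your route produces asymmetric index sets that must be repackaged post-decoupling; but both arguments go through, and your treatment of the second claim (Jensen on the concave $\bar\psi_n$, then generalized H\"older on $\prod_k\mathbf{1}_{|\xi_{i_k}|>t_k}$ followed by monotonicity) matches the paper exactly.
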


As an immediate consequence of Theorem \ref{thm:multiplier_ineq}, we have the following:
\begin{corollary}\label{cor:multiplier_ineq}
Suppose that the conditions on $(X_1,\ldots,X_n)$ and $(\xi_1,\ldots,\xi_n)$ in Theorem \ref{thm:multiplier_ineq} hold, and that $\xi_i$'s have the same marginal distributions. If there exist some $\gamma>1$ and $\kappa_0>0$ such that 
\begin{align}\label{ineq:cor_multiplier_1}
\E \biggpnorm{\sum_{\substack{1\leq i_k\leq \ell_k, 1\leq k\leq m}} \epsilon_{i_1}^{(1)}\cdots\epsilon_{i_m}^{(m)} f(X_{i_1}^{(1)},\ldots,X_{i_m}^{(m)})}{\mathcal{F}}\leq  \kappa_0 \prod_{k=1}^m \ell_k^{1/\gamma}
\end{align}
holds for all $1\leq \ell_1,\ldots,\ell_m\leq n$, then 
\begin{align}\label{ineq:cor_multiplier_2}
\E \biggpnorm{\sum_{1\leq i_1,\ldots,i_m\leq n} \xi_{i_1}\cdots\xi_{i_m} f(X_{i_1},\ldots,X_{i_m})}{\mathcal{F}} \leq K_m \kappa_0 \pnorm{\xi_1}{m\gamma,1}^m\cdot n^{ m/\gamma}.
\end{align}
\end{corollary}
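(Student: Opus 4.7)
The plan is to apply the second bound of Theorem~\ref{thm:multiplier_ineq} directly, with the natural choice $\bar{\psi}_n(x) = \kappa_0 x^{1/\gamma}$. First I would check that the hypothesis (\ref{ineq:cor_multiplier_1}) rewrites as $\psi_n(\ell_1,\ldots,\ell_m) = \bar{\psi}_n\big(\prod_{k=1}^m \ell_k\big)$, which is valid since $\kappa_0 \prod_k \ell_k^{1/\gamma} = \kappa_0 (\prod_k \ell_k)^{1/\gamma}$. Since $\gamma>1$, the function $\bar{\psi}_n$ is concave and non-decreasing on $\R_{\geq 0}$, so the second statement of Theorem~\ref{thm:multiplier_ineq} applies.

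Next, I would simplify the resulting integrand using the identical marginal assumption: since $\Prob(\abs{\xi_{i_k}}>t_k) = \Prob(\abs{\xi_1}>t_k)$ does not depend on $i_k$, the multiple sum collapses to
\begin{align*}
\sum_{1\leq i_1,\ldots,i_m\leq n}\prod_{k=1}^m \Prob(\abs{\xi_{i_k}}>t_k)^{1/m} = n^m \prod_{k=1}^m \Prob(\abs{\xi_1}>t_k)^{1/m}.
\end{align*}
Applying $\bar{\psi}_n$ to this quantity introduces a factor $n^{m/\gamma}$ and replaces each exponent $1/m$ by $1/(m\gamma)$. Then Fubini factorizes the $m$-fold integral as a product of $m$ identical one-dimensional integrals
\begin{align*}
\int_0^\infty \Prob(\abs{\xi_1}>t)^{1/(m\gamma)}\ \d{t} = \pnorm{\xi_1}{m\gamma,1},
\end{align*}
by the very definition of the $L_{p,1}$ `norm' given in Section~\ref{section:notation}. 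Multiplying everything out gives the claimed bound $K_m \kappa_0 \pnorm{\xi_1}{m\gamma,1}^m n^{m/\gamma}$.

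There is no real obstacle here: the content of the corollary is purely the bookkeeping of exponents, and the only conceptual point is recognizing that the power function $x\mapsto \kappa_0 x^{1/\gamma}$ is concave precisely when $\gamma>1$, which is exactly the hypothesis imposed. Everything substantive has already been absorbed into Theorem~\ref{thm:multiplier_ineq}.
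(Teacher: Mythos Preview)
Your proposal is correct and follows essentially the same approach as the paper: choose $\bar{\psi}_n(t)=\kappa_0 t^{1/\gamma}$, invoke the second bound of Theorem~\ref{thm:multiplier_ineq}, use identical marginals to collapse the sum to $n^m\prod_k \Prob(\abs{\xi_1}>t_k)^{1/m}$, and factorize the integral into $m$ copies of $\pnorm{\xi_1}{m\gamma,1}$.
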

\begin{proof}
Let $\psi_n(\ell_1,\ldots,\ell_m)\equiv \kappa_0 \big(\prod_{k=1}^m \ell_k\big)^{1/\gamma}\equiv \bar{\psi}_n\big(\prod_{k=1}^m \ell_k\big)$ where $\bar{\psi}_n(t)\equiv \kappa_0\cdot t^{1/\gamma}$. By Theorem \ref{thm:multiplier_ineq}, the LHS of the above display can be bounded by
\begin{align*}
& K_m \kappa_0 \int_{\R_{\geq 0}^m }   \bigg(  \sum_{1\leq i_1,\ldots,i_m\leq n}  \prod_{k=1}^m \Prob\big(\abs{\xi_1}>t_k\big)^{1/m}\bigg)^{1/\gamma}\ \d{t_1}\cdots\d{t_m} \\
&= K_m\kappa_0\cdot n^{m/\gamma}  \prod_{k=1}^m \int_0^\infty   \Prob(\abs{\xi_1}>t_k)^{1/m\gamma}\ \d{t_k}= K_m \kappa_0 \pnorm{\xi_1}{m\gamma,1}^m\cdot n^{ m/\gamma} ,
\end{align*}
as desired.
\end{proof}

The above corollary shows that the multiplier $U$-process (\ref{ineq:cor_multiplier_2}) enjoys the same size $n^{m/\gamma}$ as the Rademacher randomized $U$-process (\ref{ineq:cor_multiplier_1}), as long as the multipliers $\xi_i$'s satisfy the moment condition $\pnorm{\xi_1}{m\gamma}<\infty$.  Whether this moment condition is necessary remains open for general $m\geq 2$. For $m=1$, this moment condition cannot be substantially improved, see \cite[Theorem 2]{han2017sharp}.

The next proposition shows certain sharpness of Corollary \ref{cor:multiplier_ineq} in terms of the size of the multiplier $U$-process. In particular, we prove that there exists $\mathcal{F}$ verifying the condition (\ref{ineq:cor_multiplier_1}), while the inequality (\ref{ineq:cor_multiplier_2}) cannot be further improved for i.i.d. centered multipliers $\xi_i$'s with sufficient moments. 

\begin{proposition}\label{prop:lower_bound_multiplier_ineq}
Suppose that $\mathcal{X}\equiv  [0,1]$ and $P$ is the uniform probability measure on $\mathcal{X}$. Fix $\gamma>2$. Then there exists some $\mathcal{F}$ defined on $\mathcal{X}$ for which
	\begin{align*}
	\E \biggpnorm{\sum_{\substack{1\leq i_k\leq \ell_k, 1\leq k\leq m}} \epsilon_{i_1}^{(1)}\cdots\epsilon_{i_m}^{(m)} f(X_{i_1}^{(1)},\ldots,X_{i_m}^{(m)})}{\mathcal{F}}\leq  \kappa_0 \prod_{k=1}^m \ell_k^{1/\gamma}
	\end{align*} 
	holds for all $1\leq \ell_1,\ldots,\ell_m\leq n$, such that for any centered i.i.d. random variables $\xi_1,\ldots,\xi_n$ with $0<\pnorm{\xi_1}{1}<\infty$, 
	\begin{align*}
	\E \biggpnorm{\sum_{1\leq i_1,\ldots,i_m\leq n} \xi_{i_1}\cdots\xi_{i_m} f(X_{i_1},\ldots,X_{i_m})}{\mathcal{F}} \geq \kappa_{m,\gamma,\xi} \cdot n^{m/\gamma}.
	\end{align*}
	Here the constant $\kappa_{m,\gamma,\xi}$ only depends on $m,\gamma$ and the law of $\xi_1$.
\end{proposition}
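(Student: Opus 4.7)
The plan is to reduce the claim to the scalar case $m=1$ via a tensor-product construction. First, I invoke \cite[Theorem 2]{han2017sharp} (valid for $\gamma>2$) to extract a class $\mathcal{G}$ of $P$-centered functions $g:[0,1]\to\R$ satisfying simultaneously, for all $1\leq\ell\leq n$ and for any centered i.i.d. $\xi_i$'s with $0<\pnorm{\xi_1}{1}<\infty$,
\begin{align*}
\E\biggpnorm{\sum_{i=1}^\ell \epsilon_i g(X_i)}{\mathcal{G}} \leq \kappa_0^{1/m}\cdot \ell^{1/\gamma},\qquad \E\biggpnorm{\sum_{i=1}^n \xi_i g(X_i)}{\mathcal{G}} \geq c_{\gamma,\xi}\cdot n^{1/\gamma}.
\end{align*}
Then I set $\mathcal{F}\equiv\{f_g:g\in\mathcal{G}\}$ with $f_g(x_1,\ldots,x_m)\equiv g(x_1)\cdots g(x_m)$. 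Each $f_g$ is manifestly symmetric; for any $1\leq r\leq m$ and $x_1,\ldots,x_{r-1}\in\mathcal{X}$, integrating out $x_r,\ldots,x_m$ gives $g(x_1)\cdots g(x_{r-1})\cdot(Pg)^{m-r+1}=0=P^m f_g$, so $\mathcal{F}$ is $P^m$-centered and $P$-degenerate of order $m-1$ as required.

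Both remaining verifications pivot on the following algebraic factorizations:
\begin{align*}
\sum_{\substack{1\leq i_k\leq \ell_k \\ 1\leq k\leq m}}\epsilon_{i_1}^{(1)}\cdots\epsilon_{i_m}^{(m)} f_g(X_{i_1}^{(1)},\ldots,X_{i_m}^{(m)}) &= \prod_{k=1}^m\bigg(\sum_{i_k=1}^{\ell_k}\epsilon_{i_k}^{(k)} g(X_{i_k}^{(k)})\bigg), \\
\sum_{1\leq i_1,\ldots,i_m\leq n}\xi_{i_1}\cdots\xi_{i_m} f_g(X_{i_1},\ldots,X_{i_m}) &= \bigg(\sum_{i=1}^n \xi_i g(X_i)\bigg)^m.
\end{align*}
For the Rademacher upper bound (\ref{ineq:cor_multiplier_1}), push $\sup_{g\in\mathcal{G}}$ inside the product of absolute values in the first identity, then use independence of the decoupled copies across $k$ together with Fubini to factor the expectation, obtaining $\prod_{k=1}^m \E\pnorm{\sum_{i=1}^{\ell_k}\epsilon_{i}^{(k)} g(X_{i}^{(k)})}{\mathcal{G}}\leq \kappa_0\prod_k \ell_k^{1/\gamma}$. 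For the multiplier lower bound, the second identity together with the elementary fact $\sup_g\abs{Y_g}^m = (\sup_g\abs{Y_g})^m$ for $m\geq 1$ gives
\begin{align*}
\E\biggpnorm{\sum_{1\leq i_1,\ldots,i_m\leq n}\xi_{i_1}\cdots\xi_{i_m} f(X_{i_1},\ldots,X_{i_m})}{\mathcal{F}} = \E\bigg(\sup_{g\in\mathcal{G}}\biggabs{\sum_{i=1}^n\xi_i g(X_i)}\bigg)^m,
\end{align*}
and Jensen's inequality applied to the convex map $t\mapsto t^m$ on $[0,\infty)$, combined with the scalar lower bound, produces a lower bound of $c_{\gamma,\xi}^m\cdot n^{m/\gamma}$. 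One can therefore take $\kappa_{m,\gamma,\xi}\equiv c_{\gamma,\xi}^m$.

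The main obstacle is the initial extraction of the scalar class $\mathcal{G}$: matching a Rademacher upper bound of order $\ell^{1/\gamma}$ against a multiplier lower bound of order $n^{1/\gamma}$ for merely $L_1$ multipliers. The assumption $\gamma>2$ is precisely what enables such simultaneous matching in the scalar setting and is the reason the proposition is stated in this regime. Once $\mathcal{G}$ is in hand, the promotion to order $m$ is entirely algebraic via the tensor product, and the proof requires no further probabilistic input beyond the factorizations above.
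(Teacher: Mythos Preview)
Your argument is correct, and it differs from the paper's in two ways worth noting. The paper builds the scalar class explicitly---take $\alpha=2/(\gamma-1)$, let $\mathcal{F}_1$ be the $P$-centered H\"older ball $C^{1/\alpha}([0,1])$ restricted to $\{P\varphi^2\le n^{-2/(2+\alpha)}\}$, and invoke the Gin\'e--Koltchinskii matching bound (Lemma~\ref{lem:gine_koltchinskii_matching_bound_ep})---rather than citing \cite[Theorem~2]{han2017sharp}; you should check that the cited theorem really delivers both the Rademacher upper bound for all $\ell\le n$ and the multiplier lower bound at $n$ for merely $L_1$ centered $\xi$, and if not, the paper's explicit construction (with its $n$-dependent variance cap) supplies your $\mathcal{G}$. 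More interestingly, the paper tensors with \emph{independent} factors $\varphi_1\otimes\cdots\otimes\varphi_m$, so its lower bound factorizes across $k$ and each factor is handled by a desymmetrization step (independent copy $\xi_i'$, then $\xi_i-\xi_i'\stackrel{d}{=}\epsilon_i|\xi_i-\xi_i'|$ and $\E|\xi_i-\xi_i'|\ge\|\xi_1\|_1$). Your diagonal choice $g^{\otimes m}$ makes the undecoupled multiplier sum collapse to $\big(\sum_i\xi_ig(X_i)\big)^m$, so a single Jensen on $t\mapsto t^m$ replaces that desymmetrization entirely---a cleaner reduction, at the cost of a slightly smaller $\mathcal{F}$.
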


\begin{remark}
In the special case of $m=1$, the multiplier inequality for $U$-processes in Theorem \ref{thm:multiplier_ineq} reduces to (a special case of) the improved multiplier inequality for empirical processes proved in \cite[Theorem 1]{han2017sharp}. The reader is referred to \cite[Section 2.3]{han2017sharp} for a detailed comparison of the improvement in this case over the classical multiplier inequality (cf. \cite[Lemma 2.9.1]{van1996weak}).
\end{remark}

In the applications in the next section, the following result will be useful in verifying asymptotic equicontinuity of the multiplier $U$-processes.

\begin{corollary}\label{cor:asymptotic_equicont}
Consider the setup of Theorem \ref{thm:multiplier_ineq}. Let $\{\mathcal{F}_{(\ell_1,\ldots,\ell_m),n}: 1\leq \ell_1,\ldots,\ell_m\leq n, n \in \N\}$ be  function classes such that $\mathcal{F}_{(\ell_1,\ldots,\ell_m),n}\supset \mathcal{F}_{(n,\ldots,n),n}$ for all $1\leq \ell_1,\ldots,\ell_m\leq n$. Suppose that $\xi_i$'s have the same marginal distributions with $\pnorm{\xi_1}{2m,1}<\infty$. Suppose that there exists some bounded measurable function $a: \R^m_{\geq 0}\to \R_{\geq 0}$ with $a(\ell_1,\ldots,\ell_m)\to 0$ as $\ell_1\wedge \ldots \wedge \ell_m \to \infty$, such that the expected supremum of the decoupled $U$-processes  satisfies
\begin{align*}
&\E \biggpnorm{\sum_{\substack{1\leq i_k\leq \ell_k, 1\leq k\leq m}} \epsilon_{i_1}^{(1)}\cdots\epsilon_{i_m}^{(m)} f(X_{i_1}^{(1)},\ldots,X_{i_m}^{(m)})}{\mathcal{F}_{(\ell_1,\ldots,\ell_m),n} }\\
&\qquad \leq a(\ell_1,\ldots,\ell_m) \bigg(\prod_{k=1}^m \ell_k\bigg)^{1/2}
\end{align*}
for all $1\leq \ell_1,\ldots,\ell_m\leq n$. Then
\begin{align*}
n^{-m/2} \E \biggpnorm{\sum_{1\leq i_1,\ldots,i_m\leq n} \xi_{i_1}\cdots\xi_{i_m} f(X_{i_1},\ldots,X_{i_m})}{\mathcal{F}_{(n,\ldots,n),n}} \to 0,\quad n \to \infty.
\end{align*}
\end{corollary}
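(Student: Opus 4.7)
The plan is to invoke the first form of Theorem~\ref{thm:multiplier_ineq} with the choice $\psi_n(\ell_1,\ldots,\ell_m) := a(\ell_1,\ldots,\ell_m) \prod_{k=1}^m \ell_k^{1/2}$, applied to the target class $\mathcal{F} = \mathcal{F}_{(n,\ldots,n),n}$. The nesting hypothesis $\mathcal{F}_{(\ell_1,\ldots,\ell_m),n} \supset \mathcal{F}_{(n,\ldots,n),n}$ transfers the assumed symmetrized-$U$-process bound from each larger class down to the target one, which makes this $\psi_n$ admissible. Writing $N_k\equiv N_k(t_k):=\sum_{i=1}^n \bm{1}_{\abs{\xi_i}>t_k}$, the theorem yields
\[
n^{-m/2}\, \E \biggpnorm{\sum_{1\leq i_1,\ldots,i_m\leq n} \xi_{i_1}\cdots\xi_{i_m} f(X_{i_1},\ldots,X_{i_m})}{\mathcal{F}_{(n,\ldots,n),n}} \leq K_m \int_{\R_{\geq 0}^m} \E\biggl[a(N_1,\ldots,N_m) \prod_{k=1}^m (N_k/n)^{1/2}\biggr] \d{t_1}\cdots\d{t_m},
\]
so the task reduces to showing that this right-hand side vanishes as $n\to\infty$, which I will carry out via dominated convergence in $(t_1,\ldots,t_m)$.

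For an $n$-uniform integrable majorant, H\"older's inequality with $m$ factors of equal exponent $m$, together with the crude pointwise bound $(N_k/n)^{m/2}\leq N_k/n$ (valid for $m\geq 2$ since $0\leq N_k/n\leq 1$) and the identity $\E N_k = n\Prob(\abs{\xi_1}>t_k)$, yields $\E\prod_k (N_k/n)^{1/2}\leq \prod_k \Prob(\abs{\xi_1}>t_k)^{1/m}$. Integrating $\|a\|_\infty \prod_k \Prob(\abs{\xi_1}>t_k)^{1/m}$ over $\R_{\geq 0}^m$ factorizes into $\|a\|_\infty \pnorm{\xi_1}{m,1}^m$, which is finite since $L_{2m,1}\subset L_{m,1}$. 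For the pointwise vanishing, fix $(t_1,\ldots,t_m)$ with $p_k:=\Prob(\abs{\xi_1}>t_k)>0$; the law of large numbers (available in the i.i.d.\ or exchangeable-weight regimes used in the intended applications of Sections \ref{section:multiplier_CLT_bootstrap}--\ref{section:complex_sampling_M}) gives $N_k/n\to p_k$ and thus $N_k\to\infty$ in probability for every $k$, so the hypothesis on $a$ forces $a(N_1,\ldots,N_m)\to 0$ in probability. Uniform boundedness by $\|a\|_\infty$ then upgrades this to $\E[a(N_1,\ldots,N_m)\prod_k (N_k/n)^{1/2}]\to 0$ via bounded convergence. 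At $(t_1,\ldots,t_m)$ where some $p_k=0$, the integrand is identically zero. Dominated convergence on the outer integral therefore concludes.

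The main point requiring care is the H\"older estimate that yields the $n$-uniform integrable majorant under the $L_{2m,1}$ moment hypothesis; notably this hypothesis is stronger than the bare $L_{m,1}$ that the argument actually uses, but it matches the natural scaling exhibited by Corollary~\ref{cor:multiplier_ineq}. Apart from this step, everything else is a routine dominated-convergence exercise once Theorem~\ref{thm:multiplier_ineq} delivers the initial reduction.
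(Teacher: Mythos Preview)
Your approach is essentially the paper's: apply Theorem~\ref{thm:multiplier_ineq} with $\psi_n(\ell_1,\ldots,\ell_m)=a(\ell_1,\ldots,\ell_m)\prod_k \ell_k^{1/2}$ and then run dominated convergence over $(t_1,\ldots,t_m)$. The only structural difference is in how the integrable majorant is produced. The paper first applies Cauchy--Schwarz to split $\E[a(N_1,\ldots,N_m)\prod_k N_k^{1/2}]$ into $A_{2,n}(t)\equiv(\E a^2(N_1,\ldots,N_m))^{1/2}$ times $(\E\prod_k N_k)^{1/2}$, and then bounds the second factor via generalized H\"older to obtain the majorant $\|a\|_\infty\prod_k \Prob(|\xi_1|>t_k)^{1/(2m)}$, which is exactly where the $L_{2m,1}$ assumption enters. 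Your route---bound $a$ by $\|a\|_\infty$ and apply H\"older with $m$ equal exponents to the product---is a bit slicker and, as you observe, for $m\geq 2$ requires only $\|\xi_1\|_{m,1}<\infty$; note however that your crude step $(N_k/n)^{m/2}\leq N_k/n$ fails for $m=1$, where one should instead use Jensen to get $\E(N_1/n)^{1/2}\leq \Prob(|\xi_1|>t_1)^{1/2}$, and then the $L_{2,1}=L_{2m,1}$ condition is genuinely used.

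There is one soft spot in your pointwise-vanishing step: you appeal to a law of large numbers for $N_k/n$, but the corollary only assumes identical marginals for the $\xi_i$, not independence or exchangeability. The paper's proof has exactly the same implicit gap (it simply asserts $A_{2,n}\to 0$). In fact no LLN is needed at all: for fixed $\epsilon>0$ choose $L$ with $a(\ell_1,\ldots,\ell_m)<\epsilon$ whenever $\min_k\ell_k>L$; on $\{\min_k N_k>L\}$ the integrand is at most $\epsilon$, while on $\{\min_k N_k\leq L\}$ one has $\prod_k(N_k/n)^{1/2}\leq (\min_k N_k/n)^{1/2}\leq (L/n)^{1/2}\to 0$, so the expectation is at most $\epsilon+\|a\|_\infty(L/n)^{1/2}$. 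This closes the argument under the stated hypotheses.
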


\section{Multiplier central limit theorem and the bootstrap}\label{section:multiplier_CLT_bootstrap}

In this section, we will apply the multiplier inequality in Theorem \ref{thm:multiplier_ineq} to establish a multiplier central limit theorem and an exchangeably weighted bootstrap central limit theorem for $U$-processes. As already mentioned in the introduction, the duality between these two limit theorems is akin to the development from the empirical process theory side: a multiplier central limit theorem for empirical processes is established in \cite{gine1984some,gine1986lectures}, which serves as a cornerstone for the bootstrap central limit theorems in \cite{gine1990bootstrapping,praestgaard1993exchangeably}.

Below we review some basic facts for the central limit theorems for degenerate $U$-statistics. The materials below come from \cite[Section 4.2 ]{de2012decoupling}; the reader is referred therein for a more detailed description. Let $K_{P}$ be the Gaussian chaos process defined on $\R\oplus L_2^{c,\N}(P)\equiv \R\oplus \big(\oplus_{m=1}^\infty L_2^{c,m}(P)\big)$ as follows\footnote{ $\oplus$ is the orthogonal sum in $L_2(\mathcal{X}^{\infty},\mathcal{A}^{\infty},P^{\infty})$.}. Let $h^\psi_m(x_1,\ldots,x_m)=\prod_{\ell=1}^m \psi(x_\ell)$ for $\psi \in L_2^{c,1}(P)$. Then the linear span of $\{h^\psi_m: \psi \in L_2^{c,1}(P)\}$ is dense in the space of $L_2^{c,m}(P)$ with respect to $L_2(P^m)$. Hence we may define
\begin{align}
K_{P}(h^\psi_m)\equiv (m!)^{1/2} R_m\big(G_P(\psi),\E\psi^2,0,\ldots,0\big),
\end{align}
and extend this map linearly and continuously, with $K_P(1)\equiv 1$, on $\R\oplus L_2^{c,\N}(P)$. Here $G_P$ is the isonormal Gaussian process on $L_2^{c,1}(P)$ with covariance structure $\E G_P (f) G_P(g)= P(fg) (f,g \in L_2^{c,1}(P))$, and $R_m$ is the polynomial of degree $m$ given by the Newton's identity (cf.  \cite[pp. 175]{de2012decoupling}):
\begin{align}\label{eqn:Newton_identity}
\sum_{1\leq i_1<\ldots<i_m\leq n} t_{i_1}\cdots t_{i_m} = R_m\bigg(\sum_{i=1}^n t_i,\sum_{i=1}^n t_i^2,\ldots,\sum_{i=1}^n t_i^m\bigg).
\end{align}
With these notations, if $f_\ell \in L_2^{c,m_\ell}(P) (1\leq \ell \leq k)$, then the following central limit theorem holds (cf. \cite[Theorem 4.2.4]{de2012decoupling}):
\begin{align*}
\bigg(\binom{n}{m_1}^{1/2} U_n^{(m_1)}(f_1),\ldots \binom{n}{m_k}^{1/2} U_n^{(m_k)}(f_k) \bigg) \rightsquigarrow_d \big(K_P(f_1),\ldots,K_P(f_k)\big).
\end{align*}
It is also well-known that if a class of measurable functions $\mathcal{F}$ satisfies good entropy conditions, then a central limit theorem in $\ell^\infty(\mathcal{F})$ holds (cf. \cite[Theorem 5.3.7]{de2012decoupling}): for any $1\leq k\leq m$,
\begin{align*}
\bigg\{ \binom{n}{k}^{1/2} U_n^{(k)}(\pi_k f): f \in \mathcal{F}\bigg\}\rightsquigarrow_d \big\{K_P(\pi_k f): f \in \mathcal{F}\big\}\textrm{  in }\ell^\infty(\mathcal{F}),
\end{align*}
or equivalently,
\begin{align*}
\sup_{\psi \in \mathrm{BL}(\ell^\infty(\mathcal{F})) } \biggabs{\E^\ast \psi\bigg(\binom{n}{k}^{1/2}U_n^{(k)}(\pi_k)\bigg)-\E \psi(K_P(\pi_k))}\to 0,
\end{align*}
where $\E^\ast$ is the outer expectation (cf. \cite[Section 1.2]{van1996weak})). Now we consider the \emph{multiplier $U$-process}: for any $f \in \mathcal{F}$ and $\xi_i$'s, define
\begin{align}
U_{n,\xi}^{(m)}(f)\equiv \frac{1}{\binom{n}{m}}\sum_{1\leq i_1<\ldots<i_m\leq n} \xi_{i_1}\cdots\xi_{i_m}f(X_{i_1},\ldots,X_{i_m}).
\end{align}
Our first result in this section establishes a multiplier central limit theorem for $U$-processes.
\begin{theorem}\label{thm:multiplier_CLT}
Let $\{\xi_i\}_{i=1}^\infty$ be i.i.d. centered random variables with variance $1$ and $\pnorm{\xi_1}{2m,1}<\infty$. Let $\mathcal{F}\subset L_{2}^{c,m}(P)$ admit a $P^m$-square integrable envelope $F$ such that
\begin{align*}
\int_0^{1} \big(\sup_Q \log\mathcal{N}\big(\epsilon \pnorm{F}{L_2(Q)},\mathcal{F}, L_2(Q)\big)\big)^{m/2}\ \d{\epsilon}<\infty,
\end{align*}
where the supremum is taken over all discrete probability measures. Then
\begin{align*}
\sup_{\psi \in \mathrm{BL}(\ell^\infty(\mathcal{F}))} \biggabs{\E^\ast \psi\bigg(\binom{n}{m}^{1/2}U_{n,\xi}^{(m)}\bigg)-\E \psi(K_P)}\to 0.
\end{align*}
\end{theorem}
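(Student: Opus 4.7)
The plan is to establish weak convergence of $\binom{n}{m}^{1/2}U_{n,\xi}^{(m)}$ in $\ell^\infty(\mathcal{F})$ by combining (i) finite-dimensional convergence and (ii) asymptotic equicontinuity with respect to the intrinsic pseudometric $\rho_P(f_1,f_2)^2 \equiv P^m((f_1-f_2)^2)$; these two standard ingredients together yield the desired CLT (cf. \cite[Theorem 1.5.4]{van1996weak}). For fidis, I would enlarge the sample space to $Y_i \equiv (X_i,\xi_i)$, i.i.d.\ under $Q \equiv P \otimes \mathcal{L}(\xi_1)$, and set $g_f(y_1,\ldots,y_m) \equiv \xi_1\cdots\xi_m f(x_1,\ldots,x_m)$. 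Using $\E\xi_1=0$, $\E\xi_1^2=1$, and the $P$-degeneracy of $f$, one checks directly that $g_f$ is symmetric, $Q^m$-square integrable, and $Q$-degenerate of order $m-1$; moreover $U_{n,\xi}^{(m)}(f)$ coincides with the usual $U$-statistic $U_n^{(m)}(g_f)$ based on $Y_1,\ldots,Y_n$. The classical CLT for degenerate $U$-statistics \cite[Theorem 4.2.4]{de2012decoupling} gives joint convergence to $(K_Q(g_{f_j}))_j$, and matching covariances via the Wiener-It\^o isometry,
\begin{align*}
\E[K_Q(g_{f_1})K_Q(g_{f_2})] = m!\,Q^m(g_{f_1}g_{f_2}) = m!\,P^m(f_1 f_2) = \E[K_P(f_1)K_P(f_2)],
\end{align*}
identifies this limit in distribution with $(K_P(f_j))_j$.

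For the asymptotic equicontinuity step, let $\mathcal{F}_\delta \equiv \{f_1-f_2 : f_1,f_2 \in \mathcal{F},\; \rho_P(f_1,f_2) < \delta\}$, with envelope $2F$. Under the uniform entropy hypothesis with exponent $m/2$, standard $m$-fold chaining for decoupled Rademacher $U$-chaoses (as developed in \cite{arcones1993limit} and \cite[Chapter 5]{de2012decoupling}) delivers $\kappa_\delta>0$ with $\kappa_\delta \to 0$ as $\delta\downarrow 0$ such that
\begin{align*}
\E \biggpnorm{\sum_{\substack{1\leq i_k\leq \ell_k \\ 1\leq k\leq m}} \epsilon_{i_1}^{(1)}\cdots\epsilon_{i_m}^{(m)} h(X_{i_1}^{(1)},\ldots,X_{i_m}^{(m)})}{\mathcal{F}_\delta} \leq \kappa_\delta\prod_{k=1}^m \ell_k^{1/2}
\end{align*}
for all $1\leq \ell_1,\ldots,\ell_m\leq n$. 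Since $\pnorm{\xi_1}{2m,1}<\infty$, Corollary \ref{cor:multiplier_ineq} (with $\gamma=2$) then yields
\begin{align*}
n^{-m/2}\,\E\biggpnorm{\sum_{1\leq i_1,\ldots,i_m\leq n}\xi_{i_1}\cdots\xi_{i_m} h(X_{i_1},\ldots,X_{i_m})}{\mathcal{F}_\delta} \lesssim_m \kappa_\delta\,\pnorm{\xi_1}{2m,1}^m \longrightarrow 0
\end{align*}
as $\delta\downarrow 0$. Converting the all-indices sum to $\binom{n}{m}^{1/2}U_{n,\xi}^{(m)}$ on $\mathcal{F}_\delta$ (the diagonal contributions are absorbed by a standard Hoeffding-type decomposition exploiting complete degeneracy), followed by Markov's inequality, delivers asymptotic equicontinuity; equivalently, Corollary \ref{cor:asymptotic_equicont} can be invoked directly with $\mathcal{F}_{(\ell_1,\ldots,\ell_m),n}$ chosen as suitably shrinking versions of $\mathcal{F}_\delta$.

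The main technical obstacle lies entirely in Step 2: furnishing the decoupled Rademacher chaos bound with a constant $\kappa_\delta$ that vanishes as $\delta\downarrow 0$ requires careful $m$-fold chaining for order-$m$ chaoses, and the uniform entropy exponent $m/2$ is precisely calibrated for this purpose. Corollary \ref{cor:multiplier_ineq}, the principal new ingredient of this paper, then transfers the Rademacher control to the general multiplier $U$-process at the essentially sharp moment cost $\pnorm{\xi_1}{2m,1}<\infty$, which is nearly optimal in view of Proposition \ref{prop:lower_bound_multiplier_ineq}.
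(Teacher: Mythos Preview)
Your finite-dimensional argument via the enlarged sample space $Y_i=(X_i,\xi_i)$ is clean and correct in outline; the paper itself skips fidis entirely (``We only need to check the asymptotic equi-continuity''), so you are supplying a detail it omits. One caution: for $m\geq 2$ the phrase ``matching covariances via the Wiener--It\^o isometry identifies this limit'' is too loose, since order-$m$ chaoses are not determined by their covariance. What actually works is that $g_{h_m^\psi}=h_m^{\tilde\psi}$ with $\tilde\psi(x,\xi)=\xi\psi(x)$, so $(G_Q(\tilde\psi_q))_q$ and $(G_P(\psi_q))_q$ are jointly Gaussian with the same covariance, and then the chaoses built from them coincide in law.

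The real gap is in your equicontinuity step. You assert that standard chaining delivers a constant $\kappa_\delta$, \emph{uniform over all} $1\le\ell_1,\ldots,\ell_m\le n$, with $\kappa_\delta\to 0$ as $\delta\downarrow 0$. Chaining for the conditional chaos gives a bound proportional to $\int_0^{r_{\bm{\ell}}(\delta)}(\log\mathcal{N})^{m/2}$, where the random radius
\[
r_{\bm{\ell}}(\delta)^2=\sup_{h\in\mathcal{F}_\delta}\frac{1}{\prod_k\ell_k}\sum_{1\le i_k\le \ell_k}h^2(X_{i_1},\ldots,X_{i_m})
\]
is an \emph{empirical} $L_2$-diameter. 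For small $\ell_k$ there is no reason for $r_{\bm{\ell}}(\delta)$ to be small (a tiny $L_2(P^m)$-norm does not force a tiny empirical norm on a handful of points), so the resulting $\kappa$ cannot be taken independent of $\bm{\ell}$ while still vanishing with $\delta$. Consequently Corollary~\ref{cor:multiplier_ineq} with $\gamma=2$ is not applicable in the way you use it. The paper instead invokes Corollary~\ref{cor:asymptotic_equicont}, whose hypothesis only asks that $a(\ell_1,\ldots,\ell_m)\to 0$ as $\ell_1\wedge\cdots\wedge\ell_m\to\infty$; verifying this is exactly the content of the ``unbalanced'' uniform LLN in Proposition~\ref{prop:LLN_unbalanced_Ustat}, which shows $r_{\bm{\ell}}(\delta)\to_p 0$ in that regime. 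You mention Corollary~\ref{cor:asymptotic_equicont} as an ``equivalent'' alternative, but it is not equivalent---it is strictly the route that works, and the LLN piece is the substantive labor you have not carried out.
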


Note that the entropy condition required in Theorem \ref{thm:multiplier_CLT} is exactly the same for the uniform central limit theorem of $U$-processes (cf. \cite{de2012decoupling,arcones1993limit}). Furthermore, the moment condition for the multipliers is a finite $L_{2m,1}$ moment, which agrees with the multiplier central limit theorem for empirical processes when $m=1$, cf. \cite{ledoux1986conditions,ledoux2013probability,van1996weak}.

One natural `application'  for the multiplier central limit theorem in Theorem \ref{thm:multiplier_CLT} is to suggest how to proceed with the formulation of the bootstrap for $U$-processes with general weights. First, let us state some assumptions on the weights.

\begin{assumption}\label{assumption:weights}
	Assume the following conditions on the weight $(\xi_1,\ldots,\xi_n)\equiv (\xi_{n1},\ldots,\xi_{nn})$ defined on $(\mathcal{W},\mathcal{A}_\xi,P_\xi)$:
	\begin{enumerate}
		\item[(W1)] $(\xi_1,\ldots,\xi_n)$ are exchangeable\footnote{In other words, $(\xi_1,\ldots,\xi_n)=_d(\xi_{\pi(1)},\ldots, \xi_{\pi(n)} )$ for any permutation $\pi$ of $\{1,\ldots,n\}.$}, non-negative and $\sum_{i=1}^n\xi_i=n$.
		\item[(W2)] $\sup_n \pnorm{\xi_1}{2m,1}<\infty$, $n^{-1}\max_{1\leq i\leq n}(\xi_i-1)^2\to_{P_\xi} 0$ and there exists $c>0$ such that
		\begin{align*}
		\frac{1}{n}\sum_{i=1}^n(\xi_i-1)^2\to_{P_\xi} c^2.
		\end{align*}
	\end{enumerate}
\end{assumption}
These assumptions are familiar in the context of exchangeably weighted bootstrap limit theory for empirical processes developed in \cite{praestgaard1993exchangeably}. For instance, by taking $(\xi_1,\ldots,\xi_n)\equiv \mathrm{Multinomial}(n,(1/n),\ldots,(1/n))$ we obtain Efron's bootstrap; by taking $\xi_i \equiv Y_i/\bar{Y}$ where $Y_i\sim_{\mathrm{i.i.d.}} \mathrm{exp}(1), \bar{Y} = n^{-1}\sum_{i=1}^n Y_i$ we obtain the Bayesian bootstrap. We refer the reader to \cite{praestgaard1993exchangeably} for a detailed account for various bootstrap proposals.
	
The condition $n^{-1}\max_{1\leq i\leq n}(\xi_i-1)^2\to_{P_\xi} 0$ is automatically satisfied by the moment assumption $\sup_n \pnorm{\xi_1}{2m,1}<\infty$ when $m\geq 2$. We include this condition here to match the same conditions as studied for $m=1$ in \cite{praestgaard1993exchangeably}. 

For any $f \in L_2^{c,m}(P)$, let
\begin{align}
\tilde{U}_{n,\xi}^{(m)}(f)\equiv \frac{1}{\binom{n}{m}} \sum_{1\leq i_1<\ldots<i_m\leq n} (\xi_{i_1}-1)\cdots(\xi_{i_m}-1) f(X_{i_1},\ldots,X_{i_m}).
\end{align}
\cite{huskova1993consistency} considered the special case $m=2$ and derived asymptotic distribution for a single function $f$. Below we will prove a bootstrap uniform central limit theorem.

\begin{theorem}\label{thm:bootstrap_CLT}
Suppose Assumption \ref{assumption:weights} holds. Let $\mathcal{F}\subset L_{2}^{c,m}(P)$ admit a $P^m$-square integrable envelope $F$ such that
\begin{align*}
\int_0^{1} \big(\sup_Q \log\mathcal{N}\big(\epsilon \pnorm{F}{L_2(Q)},\mathcal{F}, L_2(Q)\big)\big)^{m/2}\ \d{\epsilon}<\infty,
\end{align*}
where the supremum is taken over all discrete probability measures. Then
\begin{align*}
\sup_{\psi \in \mathrm{BL}(\ell^\infty(\mathcal{F}))} \biggabs{\E_\xi \psi\bigg(\binom{n}{m}^{1/2}\tilde{U}_{n,\xi}^{(m)}\bigg)-\E \psi(c\cdot K_P)}\to_{P_X^\ast} 0,
\end{align*}
where $c$ is the constant in (W2), and the convergence in probability $\to_{P_X^\ast}$ is with respect to the outer probability of $P^\infty$ defined on $(\mathcal{X}^\infty, \mathcal{A}^\infty)$.
\end{theorem}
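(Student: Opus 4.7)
The plan is the classical two-step bootstrap CLT strategy: I would establish (a) conditional finite-dimensional convergence of $B_n(f) \equiv \binom{n}{m}^{1/2}\tilde{U}_{n,\xi}^{(m)}(f)$ to $c\cdot K_P(f)$, and (b) conditional asymptotic equicontinuity of the process $\{B_n(f):f\in\mathcal{F}\}$ with respect to the $L_2(P^m)$ pseudo-metric, both carried out in outer $P^\infty$-probability. Combined with the standard two-step characterization of weak convergence in $\ell^\infty(\mathcal{F})$ (cf.\ \cite[Section 1.5]{van1996weak}), these imply the theorem.

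For step (a), fix $f_1,\ldots,f_k \in \mathcal{F}$. Because the centered weights $\xi_i-1$ are only exchangeable rather than i.i.d., I would first adapt the coupling device of \cite{praestgaard1993exchangeably}: on an enlarged probability space, construct an i.i.d.\ triangular array $\{\eta_{n,i}\}_{i=1}^n$ of centered random variables of variance $c^2$ with $\sup_n \pnorm{\eta_{n,1}}{2m,1}<\infty$, coupled so that $B_n(f_j) - B_n^\eta(f_j) \to 0$ in outer probability for each $j$, where $B_n^\eta$ denotes the analogous multiplier $U$-process with the $\eta$'s in place of $\xi-1$. The hypotheses in (W1)--(W2), in particular $\sum_i \xi_i = n$, the empirical variance convergence $n^{-1}\sum_i(\xi_i-1)^2 \to_{P_\xi} c^2$, and the uniform $L_{2m,1}$-bound, are precisely what make this coupling run. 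Once the reduction to i.i.d.\ multipliers is in place, the conditional (on $X$) finite-dimensional CLT for $B_n^\eta$ follows by applying a Lindeberg-type argument to the Hoeffding decomposition of the multiplier $U$-statistic in the $\eta_i$'s, with the variance $c^2$ propagating through the $m$-fold products.

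For step (b), I would apply Corollary \ref{cor:asymptotic_equicont} conditionally on $(X_1,\ldots,X_n)$, to the small classes $\mathcal{F}_\delta \equiv \{f-g : f,g\in\mathcal{F},\,\pnorm{f-g}{L_2(P^m)}<\delta\}$. The hypothesis of that corollary requires a bound on the decoupled Rademacher-randomized $U$-process of the form $a(\delta,\ell_1,\ldots,\ell_m)\prod_{k=1}^m \ell_k^{1/2}$ with $a\to 0$ as $\min_k \ell_k \to \infty$ and $\delta\to 0$; such a bound is a standard consequence of Arcones--Gin\'e chaining for decoupled $U$-processes under the stated uniform entropy condition on $\mathcal{F}$ (cf.\ \cite{arcones1993limit,de2012decoupling}), and can be made to hold in $P^\infty$-probability by standard deviation estimates for the empirical entropy integral. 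Since the $L_{2m,1}$-moment condition on $\xi_1$ in (W2) is exactly what Corollary \ref{cor:asymptotic_equicont} demands, this yields $\E_\xi\sup_{f\in\mathcal{F}_\delta}|B_n(f)|\to 0$ in $P^\infty$-probability.

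The main obstacle lies in the finite-dimensional conditional CLT of step (a). For $m=1$, the argument reduces to a standard Lindeberg CLT for a triangular array of independent variables conditional on the data, as in \cite{praestgaard1993exchangeably}; for $m\geq 2$, however, $B_n(f)$ is a degree-$m$ polynomial in the exchangeable weights, and careful tracking through the Hoeffding decomposition of this polynomial is needed to verify that the Praestgaard--Wellner coupling transfers cleanly through the $m$-fold products. The leading totally-degenerate term then yields the Gaussian chaos $c\cdot K_P(f)$, while the lower-order cross-terms, arising from interactions between repeated $\xi_i$-factors, must be shown to be asymptotically negligible conditionally on $X$; for this, the multiplier inequality of Theorem \ref{thm:multiplier_ineq} itself serves as a crucial technical tool, in combination with the moment and variance conditions in (W2).
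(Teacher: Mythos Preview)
Your asymptotic equicontinuity step (b) is essentially the paper's: Corollary \ref{cor:asymptotic_equicont} applied to the small classes $\mathcal{F}_\delta$, with the decoupled Rademacher bound verified by the Arcones--Gin\'e chaining under the uniform entropy condition. That part matches.

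Your step (a), however, takes a different route from the paper, and the route you sketch has a real gap. You propose to couple the exchangeable $\xi_i-1$ to an i.i.d.\ triangular array $\{\eta_{n,i}\}$ \`a la \cite{praestgaard1993exchangeably}, and then run a Lindeberg argument on the Hoeffding decomposition of the resulting multiplier $U$-statistic. The Praestgaard--Wellner coupling is built for \emph{linear} statistics in the weights; here $B_n(f)$ is a degree-$m$ polynomial in $(\xi_1-1,\ldots,\xi_n-1)$, and you give no mechanism by which the coupling controls the difference $B_n(f)-B_n^\eta(f)$ through the $m$-fold products. You flag this yourself as ``the main obstacle'' but do not resolve it, and invoking Theorem \ref{thm:multiplier_ineq} for the cross-terms does not help---that inequality bounds suprema over $\mathcal{F}$, not the finite-dimensional differences you need.

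The paper avoids the coupling problem entirely by a more algebraic argument. It first approximates $f\in L_2^{c,m}(P)$ in $L_2(P^m)$ by a finite linear combination $f_\epsilon=\sum_{q\le Q_\epsilon} c_q h_m^{\psi_q}$ of \emph{product} kernels $h_m^{\psi}(x_1,\ldots,x_m)=\prod_\ell \psi(x_\ell)$ with bounded $\psi\in L_2^{c,1}(P)$. The approximation error is controlled (conditionally on $X$, a.s.) via the random-permutation representation of exchangeability together with the moment bound of Lemma \ref{lem:moment_perm_prod_weight}. For the product kernel $h_m^\psi$, Newton's identity (\ref{eqn:Newton_identity}) gives
\[
\binom{n}{m}^{1/2}\tilde U_{n,\xi}^{(m)}(h_m^\psi)
=(1+o(1))\,(m!)^{1/2}\,R_m\!\big(A_n^{(1)},\ldots,A_n^{(m)}\big),
\qquad A_n^{(\ell)}=n^{-\ell/2}\sum_i(\xi_{R_i}-1)^\ell\psi^\ell(X_i),
\]
and one analyzes each $A_n^{(\ell)}$ directly: $A_n^{(1)}\rightsquigarrow c\,G_P(\psi)$ by the exchangeable CLT of Lemma \ref{lem:exchangeable_CLT}, $A_n^{(2)}\to_{P_\xi} c^2\E\psi^2$ a.s.\ by a conditional mean/variance computation, and $A_n^{(\ell)}\to_{P_\xi}0$ a.s.\ for $\ell\ge 3$ using (W2). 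Continuity of $R_m$ then delivers $c\cdot K_P(h_m^\psi)$ and, by linearity, $c\cdot K_P(f_\epsilon)$. No coupling to i.i.d.\ multipliers is needed at any point; the exchangeability is handled directly through the permutation device and the elementary lemmas.

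In short: your equicontinuity plan is right, but for the finite-dimensional part you should abandon the i.i.d.\ coupling and instead use the product-kernel approximation combined with Newton's identity, which reduces the degree-$m$ polynomial in the weights to a continuous function of power sums whose limits are read off from (W2) and Lemma \ref{lem:exchangeable_CLT}.
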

Theorem \ref{thm:bootstrap_CLT} extends the exchangeably weighted bootstrap central limit theorem for the empirical process studied in \cite{praestgaard1993exchangeably} to the context of $U$-processes. To the best knowledge of the author, there is very limited understanding for bootstrap central limit theorems for degenerate $U$-processes. The paper \cite{arcones1994Uprocesses} considered Efron's bootstrap and proved bootstrap CLTs by a straightforward conditioning argument along with the VC-type assumption that gives a uniform control for the empirical measure. The paper \cite{zhang2001bayesian} considered Bayesian bootstrap, but his results are confined to the non-degenerate case. Our Theorem \ref{thm:bootstrap_CLT} holds under the same condition for the CLT for degenerate $U$-processes, and under general bootstrap schemes.

\section{Bootstrapping $M$-estimators}\label{section:bootstrap_M}

In this section, we will investigate the bootstrap theory under the $M$-estimation framework based on $U$-statistics. Let $\Theta \subset \R^d$ index a class of symmetric kernels $\mathcal{F}\equiv \{f_\theta: \mathcal{X}^m\to \R, \theta \in \Theta\}$. Let $\theta_0$ be the unique maximizer of $\theta\mapsto P^m f_\theta$, and an estimator of $\theta_0$ based on $(X_1,\ldots,X_n)$ is given by maximizing a $U$-statistic
\begin{align}
\hat{\theta}_n\in \arg\max_{\theta \in \Theta} U_n^{(m)}(f_\theta) = \arg\max_{\theta \in \Theta} \sum_{i_1\neq \ldots\neq i_m} f_\theta(X_{i_1},\ldots,X_{i_m}).
\end{align}
In typical applications, $\mathcal{F}$ contains non-degenerate (and non-negative) kernels and hence under regularity conditions $\sqrt{n}(\hat{\theta}_n-\theta_0)$ is asymptotically normal, the variance of which depends on the unknown distribution $P$. For bootstrap weights $(\xi_1,\ldots,\xi_n)$ defined on $(\mathcal{W},\mathcal{A}_\xi,P_\xi)$, consider the following bootstrap estimate
\begin{align}
\theta^\ast_n \in \arg\max_{\theta \in \Theta} \sum_{i_1\neq \ldots\neq i_m} \xi_{i_1}\cdots\xi_{i_m} f_\theta(X_{i_1},\ldots,X_{i_m}).
\end{align}
We will be naturally interested in the asymptotic behavior of $\sqrt{n}(\theta^\ast_n-\hat{\theta}_n)$ conditional on the observed data $\{X_i\}$.

Before formally stating our results, we need the following notions concerning bootstrap in probability statements. 

\begin{definition}
Let $\{\Delta_n\}_{n=1}^\infty$ be a sequence of random variables defined on $(\Omega,\mathcal{B},\Prob)=(\mathcal{X}^\infty,\mathcal{A}^\infty, P^\infty)\times(\mathcal{W}, \mathcal{A}_\xi,P_\xi)$.
\begin{enumerate}
\item We say that $\Delta_n\equiv \mathfrak{o}_{P_\xi}(1)$ in $P_{X}$-probability if and only if for any $\epsilon>0$, $\Prob_{W|X}(\Delta_n>\epsilon)=\mathfrak{o}_{P_{X}} (1)$.
\item We say that $\Delta_n\equiv \mathcal{O}_{P_\xi}(1)$ in $P_{X}$-probability if and only if for any $L_n\to \infty$, $\Prob_{W|X}(\Delta_n>L_n)=\mathfrak{o}_{P_X}(1)$.
\end{enumerate}
\end{definition}

The main result of this section is the following theorem.

\begin{theorem}\label{thm:bootstrap_M}
Suppose that the bootstrap weights $(\xi_1,\ldots,\xi_n)$ satisfy Assumption \ref{assumption:weights}, and the following conditions hold.
\begin{enumerate}
	\item[(M1)] The map $\theta \mapsto D(f_\theta)\equiv P^m f_\theta$ has a unique maximizer at $\theta=\theta_0$ and there exists some positive definite matrix $V$ such that for $\theta \in \Theta$ close enough to $\theta_0$,
	\begin{align*}
	D(f_\theta)-D(f_{\theta_0}) = -\frac{1}{2} (\theta-\theta_0)^\top V(\theta-\theta_0) + \mathfrak{o}(\pnorm{\theta-\theta_0}{}^2).
	\end{align*}
	\item[(M2)] $\mathcal{F}=\{f_\theta: \theta \in \Theta\}$ admits a $P^m$-square integrable envelope $F$ such that
	\begin{align*}
	\int_0^{1} \big(\sup_Q \log\mathcal{N}\big(\epsilon \pnorm{F}{L_2(Q)},\mathcal{F}, L_2(Q)\big)\big)^{m/2}\ \d{\epsilon}<\infty.
	\end{align*}
	\item[(M3)] There exists a measurable map $\Delta: \mathcal{X}\to \R^d$ such that $P\Delta(X)=0$ and $P\pnorm{\Delta}{}^2<\infty$, and such that $\{r_n(\cdot,\theta):\theta \in \Theta\}$, defined by
	\begin{align*}
	r_n(x,\theta)\equiv \frac{\pi_1(f_\theta-f_{\theta_0})(x)-(\theta-\theta_0)\cdot \Delta(x)}{\pnorm{\theta-\theta_0}{}\vee n^{-1/2}},
	\end{align*}
	satisfy the following: for any $\delta_n\to 0$,
	\begin{align*}
	\sup_{\theta: \pnorm{\theta-\theta_0}{}\leq \delta_n} \bigabs{\G_n r_n(\cdot,\theta)}=\mathfrak{o}_{\mathbf{P}}(1).
	\end{align*}
\end{enumerate}
If $\pnorm{\hat{\theta}_n-\theta_0}{}=\mathfrak{o}_{\mathbf{P}}(1)$ and $\pnorm{\theta^\ast_n-\theta_0}{}=\mathfrak{o}_{P_\xi}(1)$ in $P_X$-probability, then $\sqrt{n}(\hat{\theta}_n-\theta_0)\rightsquigarrow_d m\cdot \mathcal{N}(0, V^{-1}\mathrm{cov}(\Delta) (V^{-1})^\top)$ and
\begin{align*}
\sup_{t \in \R^d} \bigabs{\Prob_{W|X}\big(\sqrt{n}(\theta_n^\ast-\hat{\theta}_n)\leq t\big)-\Prob\big(c\cdot\sqrt{n}(\hat{\theta}_n-\theta_0)\leq t\big) }\to_{P_X} 0.
\end{align*}
Here $c$ is the constant in (W2).
\end{theorem}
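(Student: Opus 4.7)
The plan is to derive parallel linear expansions for the two estimators:
\begin{align*}
\sqrt{n}(\hat\theta_n - \theta_0) &= m V^{-1}\sqrt{n}\, P_n \Delta + \mathfrak{o}_{\mathbf{P}}(1), \\
\sqrt{n}(\theta_n^\ast - \theta_0) &= m V^{-1}\sqrt{n}\, P_n^\xi \Delta + \mathfrak{o}_{P_\xi}(1)\ \text{in}\ P_X\text{-probability},
\end{align*}
where $P_n^\xi f \equiv n^{-1}\sum_{i=1}^n \xi_i f(X_i)$. Subtracting gives $\sqrt{n}(\theta_n^\ast - \hat\theta_n) = m V^{-1}\cdot n^{-1/2}\sum_i(\xi_i - 1)\Delta(X_i) + \mathfrak{o}_{P_\xi}(1)$ in $P_X$-probability; an exchangeable-weight Lindeberg argument (the scalar case of Theorem \ref{thm:bootstrap_CLT}) renders the leading term conditionally Gaussian with limit $c\cdot \mathcal{N}(0, P\Delta\Delta^\top)$. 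Combined with the first display (from which $\sqrt{n}(\hat\theta_n - \theta_0)$ converges unconditionally to the corresponding unscaled Gaussian), both $\sqrt{n}(\theta_n^\ast - \hat\theta_n)\mid X$ and $c\cdot\sqrt{n}(\hat\theta_n-\theta_0)$ have the same continuous Gaussian limit law, and Polya's theorem upgrades weak convergence to the uniform CDF convergence stated in the theorem.

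For the expansion of $\hat\theta_n$, I would apply Hoeffding's decomposition \eqref{eqn:hoeffding_decomposition}:
\begin{align*}
U_n^{(m)}(f_\theta - f_{\theta_0}) = P^m(f_\theta - f_{\theta_0}) + m\, P_n\pi_1(f_\theta - f_{\theta_0}) + \sum_{k=2}^m \binom{m}{k} U_n^{(k)}\bigl(\pi_k(f_\theta - f_{\theta_0})\bigr).
\end{align*}
Under (M2), each degenerate summand with $k\geq 2$ has a uniform modulus of $\mathfrak{o}_{\mathbf{P}}(n^{-k/2}) = \mathfrak{o}_{\mathbf{P}}(n^{-1})$ over $\pnorm{\theta-\theta_0}{}\leq \delta_n$ by the standard degenerate-$U$-process CLT, and (M1) supplies the quadratic curvature of $P^m(f_\theta-f_{\theta_0})$; the standard $M$-estimation rate theorem then yields $\pnorm{\hat\theta_n-\theta_0}{} = \mathcal{O}_{\mathbf{P}}(n^{-1/2})$. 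Invoking (M3) linearizes $m P_n\pi_1(f_\theta-f_{\theta_0}) = m(\theta-\theta_0)^\top P_n\Delta + \mathfrak{o}_{\mathbf{P}}\bigl((\pnorm{\theta-\theta_0}{}\vee n^{-1/2})\cdot n^{-1/2}\bigr)$, after which the standard argmax expansion delivers the first display.

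For the bootstrap linearization, write $\eta_i \equiv \xi_i - 1$ and expand $\xi_{i_1}\cdots\xi_{i_m} = \sum_{S\subseteq[m]}\prod_{k\in S}\eta_{i_k}$ to split the bootstrap criterion into $2^m$ blocks indexed by $S$. By the symmetry of $f_\theta$ and $\sum_i\eta_i = 0$ from (W1), after Hoeffding-decomposing each block the only first-order-in-$(\theta-\theta_0)$ contribution is $m\, P_n^\xi[\pi_1(f_\theta-f_{\theta_0})]$; the remaining pieces are of strictly higher Hoeffding order and/or carry higher powers of $\eta$, and so are governed by the multiplier $U$-process size. Applying Corollary \ref{cor:asymptotic_equicont} to the local classes $\{\pi_k(f_\theta-f_{\theta_0}):\pnorm{\theta-\theta_0}{}\leq \delta_n\}$ for each $2\leq k\leq m$ shows these remainders are $\mathfrak{o}_{P_\xi}(n^{-1})$ in $P_X$-probability uniformly. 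A conditional application of the $M$-estimation rate theorem yields $\pnorm{\theta_n^\ast-\theta_0}{} = \mathcal{O}_{P_\xi}(n^{-1/2})$ in $P_X$-probability, and (M3) completes the second display.

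The main technical obstacle is the uniform modulus control of the multiplier $U$-process in $P_X$-probability over the shrinking local classes. Corollary \ref{cor:asymptotic_equicont} is the precise tool, but one must first verify that for each $2\leq k\leq m$ the $L_2(P^k)$-size of $\pi_k(f_\theta-f_{\theta_0})$ vanishes as $\pnorm{\theta-\theta_0}{}\to 0$ (a continuity-in-parameter property of Hoeffding projections, following from the envelope and entropy in (M2)), and then check that the mixed $\eta$-weighted, partially-degenerate blocks arising from the $\xi = 1+\eta$ expansion all reduce, after symmetrization and decoupling, to the canonical symmetrized $U$-process form to which Theorem \ref{thm:multiplier_ineq} applies. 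This is precisely where the moment condition $\sup_n \pnorm{\xi_1}{2m,1}<\infty$ from (W2) is used at full strength.
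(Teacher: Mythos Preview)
Your overall architecture matches the paper's: obtain parallel linear representations
\[
\sqrt{n}(\hat\theta_n-\theta_0)=mV^{-1}\sqrt{n}\,P_n\Delta+\mathfrak{o}_{\mathbf P}(1),\qquad
\sqrt{n}(\theta_n^\ast-\theta_0)=mV^{-1}\sqrt{n}\,P_n^\xi\Delta+\mathfrak{o}_{\mathbf P}(1),
\]
subtract, apply the exchangeable CLT (the paper uses Lemma~\ref{lem:exchangeable_CLT} rather than Theorem~\ref{thm:bootstrap_CLT}), and finish with Polya. The higher-order Hoeffding pieces are killed by Corollary~\ref{cor:asymptotic_equicont}, and the rate $\sqrt{n}\,\pnorm{\theta_n^\ast-\theta_0}{}=\mathcal O_{\mathbf P}(1)$ is bootstrapped from a quadratic inequality; the transfer between unconditional $\mathfrak{o}_{\mathbf P}(1)$ and ``$\mathfrak{o}_{P_\xi}(1)$ in $P_X$-probability'' is done via Lemma~\ref{lem:transfer_probability}. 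All of this is exactly what the paper does.

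Where you diverge is in how the bootstrap criterion is unpacked. You expand $\xi_{i_1}\cdots\xi_{i_m}=\sum_{S\subset[m]}\prod_{k\in S}\eta_{i_k}$ into $2^m$ blocks and then Hoeffding-decompose each block. The paper instead keeps the $\xi$'s intact, Hoeffding-decomposes the \emph{centered} kernel $\bar g_\theta=f_\theta-f_{\theta_0}-P^m(f_\theta-f_{\theta_0})$ once, and for the leading $\pi_1$ term writes
\[
\sum_{i_1\neq\cdots\neq i_m}\xi_{i_1}\cdots\xi_{i_m}\,\pi_1(\bar g_\theta)(X_{i_1})
=\sum_{i_1}\xi_{i_1}\pi_1(\bar g_\theta)(X_{i_1})\cdot\Bigl(\sum_{i_2,\dots,i_m\neq i_1}\xi_{i_2}\cdots\xi_{i_m}\Bigr),
\]
then evaluates the bracket via Newton's identity~\eqref{eqn:Newton_identity} using $n^{-1}\sum_i\xi_i=1$ and $n^{-j}\sum_i\xi_i^j\to 0$ for $j\geq 2$ from (W1)--(W2), obtaining the factor $(1+\mathfrak o_{\mathbf P}(1))\cdot m$. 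This buys two things. First, the $k\geq 2$ remainder is a \emph{single} $\xi$-weighted degenerate $U$-process, to which Corollary~\ref{cor:asymptotic_equicont} applies verbatim; your $2^m$ route produces mixed blocks where only a subset of indices carry an $\eta$-weight, and Theorem~\ref{thm:multiplier_ineq} as stated assumes the \emph{same} multiplier on every coordinate, so you would need a mild (but not entirely free) extension or a separate reduction. Second, your bookkeeping of the $\lvert S\rvert\geq 2$ blocks paired with $\pi_1$ projections does not vanish by $\sum_i\eta_i=0$ alone (e.g.\ $\sum_{i\neq j}\eta_i\eta_j\,\pi_1 g(X_i)=-\sum_i\eta_i^2\pi_1 g(X_i)$); these are indeed lower order, but the paper's route never generates them. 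In short: your plan is correct, but the paper's direct Hoeffding-plus-Newton decomposition is cleaner and stays strictly within the hypotheses of the multiplier inequality already proved.
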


Condition (M1) requires that the population loss $D(f_\theta)$ is maximized at $\theta=\theta_0$ and an associated local Taylor expansion is valid with Hessian matrix $V$. Condition (M2) is a very typical requirement on the complexity of the model. Condition (M3) is a stochastic differentiability condition, where $\Delta$ is regarded as the derivative of $\pi_1(f_\theta)$ at $\theta=\theta_0$. 

Our conditions (M1)-(M3) are almost the same as the machinery in \cite[Theorem 5.5.7]{de2012decoupling} (see also \cite{arcones1994estimators}). Note that although our condition (M2) is stronger than \cite[condition (ii) of Theorem 5.5.7]{de2012decoupling}, there are few methods of checking (ii) in that theorem other than our (M2), so the examples studied therein can be applied quite immediately. 

One particularly interesting example is the simplicial median (cf. \cite{liu1990notion}) defined as follows: For any $(x_1,x_2,x_3) \in (\R^2)^3$, let $S(x_1,x_2,x_3)$ be the open triangle determined by $x_1,x_2,x_3$. For any $\theta \in \R^2$, let $f_\theta(x_1,x_2,x_3)\equiv \bm{1}_{C_\theta}(x_1,x_2,x_3)$ where $C_\theta\equiv \{(x_1,x_2,x_3) \in (\R^2)^3: \theta \in S(x_1,x_2,x_3)\}$. The simplicial median is defined as any maximizer of the map $\theta \mapsto U_n^{(3)}(f_\theta)$ over $\theta \in \Theta$, i.e. $\hat{\theta}_n \in \arg\max_{\theta \in \Theta} U_n^{(3)}(f_\theta)$. A central limit theorem for $\hat{\theta}_n$ is obtained in \cite{arcones1994estimators}, where the covariance of the normal limiting law depends on the law $P$ of the i.i.d. samples $X_1,X_2,\ldots$; see also \cite[Section 5.5.2]{de2012decoupling}. To apply Theorem \ref{thm:bootstrap_M}, the only `additional work' is to verify the slightly stronger condition (M2). This immediate follows as $\{\bm{1}_{C_\theta}: \theta \in \R^2\}$  is known to be a VC-subgraph class, see \cite[Example 5.2.4]{de2012decoupling}.

Our results here concerning bootstrap $M$-estimators can also be viewed as extensions of bootstrap theory for $M$-(or $Z$-) estimators under (i) the usual empirical measure studied in  \cite{wellner1996bootstrapping,bose2001generalised,chatterjee2005generalized,cheng2010bootstrap} and (ii) criteria functions that are convex with respect to the underlying parameter space, cf. \cite{bose2003generalized}.

\section{$M$-estimation under complex sampling}\label{section:complex_sampling_M}
In this section, we will study $M$-estimation under complex sampling designs. The exposition below largely follows \cite{han2018complex}. Let $U_N\equiv \{1,\ldots,N\}$, and $\mathcal{S}_N\equiv \{\{s_1,\ldots,s_n\}: n \leq N, s_i \in U_N, s_i\neq s_j,\forall i\neq j\}$ be the collection of subsets of $U_N$. We adopt the super-population framework as in \cite{rubin2005two}: Let $\{(X_i,Z_i) \in \mathcal{X}\times \mathcal{Z}\}_{i=1}^N$ be i.i.d. super-population samples defined on a probability space $(\mathcal{Y},\mathcal{A},\Prob_{(X,Z)})$, where $X^{(N)}\equiv (X_1,\ldots,X_N)$ is the vector of interest, and $Z^{(N)}\equiv (Z_1,\ldots,Z_N)$ is an auxiliary vector. A sampling design is a function $\mathfrak{p}: \mathcal{S}_N\times \mathcal{Z}^{\otimes N}\to [0,1]$ such that 
\begin{enumerate}
	\item for all $s \in \mathcal{S}_N$, $z^{(N)}\mapsto \mathfrak{p}(s,z^{(N)})$ is measurable,
	\item for all $z^{(N)} \in \mathcal{Z}^{\otimes N}$, $s\mapsto \mathfrak{p}(s, z^{(N)})$ is a probability measure.
\end{enumerate}

The probability space we work with that includes both the super-population and the design-space is the same product space $({\mathcal{S}}_N\times \mathcal{Y},  \sigma({\mathcal{S}}_N)\times \mathcal{A}, \Prob)$ as constructed in \cite{boistard2017functional}. We include the construction here for convenience of the reader: the probability measure $\Prob$ is uniquely defined through its restriction on all rectangles: for any $s\times E \in {\mathcal{S}}_N\times \mathcal{A}$,
\begin{align*}
\Prob\left(s\times E\right)\equiv \int_E \mathfrak{p}(s, z^{(N)}(\omega))\ \d{\Prob_{(X,Z)}(\omega)}\equiv \int_E \Prob_d(s,\omega)\ \d{\Prob_{(X,Z)}(\omega)}.
\end{align*}
We also use $P$ to denote the marginal law for $X$ for notational convenience.

Given $(X^{(N)},Z^{(N)})$ and a sampling design $\mathfrak{p}$, let $\{\xi_i\}_{i=1}^N\subset [0,1]$ be random variables defined on $({\mathcal{S}}_N\times \mathcal{Y},  \sigma({\mathcal{S}}_N)\times \mathcal{A}, \Prob)$ with  $\pi_i\equiv \pi_i(Z^{(N)})\equiv \E[\xi_i|Z^{(N)}]$. We further assume that $\{\xi_i\}_{i=1}^N$ are independent of $X^{(N)}$ conditionally on $Z^{(N)}$. Typically we take $\xi_i\equiv \bm{1}_{i \in s}$, where $s\sim \mathfrak{p}$, to be the indicator of whether or not the $i$-th sample $X_i$ is observed (and in this case $\pi_i(Z^{(N)})=\sum_{s \in \mathcal{S}_N: i \in s} \mathfrak{p}(s,Z^{(N)})$), but we do not require this structure a priori. $\pi_i$'s are often referred to be the first-order inclusion probabilities, and $\pi_{ij}\equiv \pi_{ij}(Z^{(N)})\equiv \E[\xi_i\xi_j|Z^{(N)}](i\neq j)$ are the second-order inclusion probabilities.

\begin{assumption}\label{assumption:sampling_design}
Consider the following conditions on the sampling design $\mathfrak{p}$:

\, (B1) $\min_{1\leq i\leq N}\pi_i\geq \pi_0>0$.

\, (B2-LLN) $\frac{1}{N}\sum_{i=1}^N \big(\frac{\xi_i}{\pi_i}-1\big) =\mathfrak{o}_{\mathbf{P}}(1)$.
\end{assumption}

(B1) is a common assumption in the literature. (B2-LLN) says that the weights $\{\xi_i/\pi_i\}$ satisfy a law of large numbers. For various sampling designs satisfying Assumption \ref{assumption:sampling_design}, including sampling without replacement, Bernoulli sampling, rejective/high entropy sampling, stratified sampling (with and without overlaps), etc., we refer the reader to \cite{han2018complex}.

Under the complex sampling setting, it is natural to use the following (inverse-weighted) $M$-estimator based on univariate kernels
\begin{align*}
\hat{\theta}_N^{\pi}\in   \arg\max_{\theta \in \Theta} \sum_{i=1}^N \frac{\xi_i}{\pi_i} f_\theta(X_i)
\end{align*}
that maximizes the Horvitz-Thompson weighted empirical measure over $\{f_\theta:\theta \in \Theta\}$. For multivariate kernels, it is natural to consider the following generalization:
\begin{align}
\hat{\theta}_N^{\pi}\in  \arg\max_{\theta \in \Theta} \sum_{i_1\neq \ldots\neq i_m} \frac{\xi_{i_1}}{\pi_{i_1}}\cdots \frac{\xi_{i_m}}{\pi_{i_m}} f_\theta(X_{i_1},\ldots,X_{i_m}).
\end{align}
We let $\Prob_N^\pi(f)\equiv \frac{1}{N}\sum_{i=1}^N \frac{\xi_i}{\pi_i} f(X_i)$ and $\G_N^\pi(f)\equiv \sqrt{N}(\Prob_N^\pi-P)(f)$ denote the Hortivz-Thompson empirical measure and empirical process respectively.

Our first main result in this section is the following.

\begin{theorem}\label{thm:M_estimation_sampling_CLT}
Suppose Assumption \ref{assumption:sampling_design}, and conditions (M1)-(M3) in Theorem \ref{thm:bootstrap_M} hold. Then
\begin{align*}
\sqrt{N}\big(\hat{\theta}_N^{\pi}-\theta_0\big) = mV^{-1} \G_N^\pi \Delta+\mathfrak{o}_{\mathbf{P}}(1).
\end{align*}
\end{theorem}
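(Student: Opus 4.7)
The plan is to follow the standard master-theorem scheme for $M$-estimators based on $U$-statistics (as in \cite[Theorem 5.5.7]{de2012decoupling} and Theorem \ref{thm:bootstrap_M}), the crucial new input being uniform control over higher-order Horvitz--Thompson (HT) weighted $U$-processes provided by Corollary \ref{cor:asymptotic_equicont}. The inverse-probability weights $\xi_i/\pi_i$ play the role that the exchangeable bootstrap weights played in the proof of Theorem \ref{thm:bootstrap_M}.

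First, I would apply a Hoeffding decomposition to write, for any symmetric kernel $f$,
\begin{align*}
\frac{1}{N^m}\sum_{i_1\neq\ldots\neq i_m} \frac{\xi_{i_1}}{\pi_{i_1}}\cdots\frac{\xi_{i_m}}{\pi_{i_m}} f(X_{i_1},\ldots,X_{i_m}) = \sum_{k=0}^m \binom{m}{k} T_N^{(k)}(\pi_k f),
\end{align*}
where $T_N^{(k)}$ is the HT-weighted $U$-statistic of order $k$ applied to the $P$-degenerate kernel $\pi_k f$, with the remaining $m-k$ indices collapsing into products of $\xi_i/\pi_i$. The $k=0$ term yields $D(f_\theta)$ up to a multiplicative scalar tending to $1$ in probability by (B1)--(B2-LLN) together with standard higher-order inclusion-probability bounds; the $k=1$ term equals $m\,\Prob_N^\pi(\pi_1 f_\theta)\cdot(1+\mathfrak{o}_{\mathbf{P}}(1))$; and the $k\geq 2$ terms constitute HT-weighted $U$-processes over the degenerate kernel classes $\{\pi_k f_\theta\}$.

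Second, after first establishing consistency $\hat\theta_N^\pi\to\theta_0$ in probability (from (M1), (M2), and a uniform law of large numbers for HT-weighted $U$-processes obtained by combining Hoeffding reduction with the uniform LLN for the HT-empirical measure in \cite{han2018complex}), I would apply the linearization of (M3) to write $\pi_1(f_\theta-f_{\theta_0})(x) = (\theta-\theta_0)\cdot\Delta(x) + (\pnorm{\theta-\theta_0}{}\vee N^{-1/2})\, r_N(x,\theta)$, and transfer the equicontinuity condition in (M3) to its HT-analogue $\sup_{\pnorm{\theta-\theta_0}{}\leq\delta_N}\abs{\G_N^\pi r_N(\cdot,\theta)}=\mathfrak{o}_{\mathbf{P}}(1)$, again via the machinery of \cite{han2018complex}. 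Combined with (M1) and the control of the $k\geq 2$ terms discussed below, this produces the uniform local expansion (of the normalized HT criterion at $\theta$ minus its value at $\theta_0$)
\begin{align*}
-\tfrac{1}{2}(\theta-\theta_0)^\top V(\theta-\theta_0) + m(\theta-\theta_0)^\top \Prob_N^\pi\Delta + \mathfrak{o}_{\mathbf{P}}\big(\pnorm{\theta-\theta_0}{}^2 + N^{-1}\big),
\end{align*}
valid on $\pnorm{\theta-\theta_0}{}\leq\delta_N$. A standard peeling argument then gives the rate $\pnorm{\hat\theta_N^\pi-\theta_0}{}=\mathcal{O}_{\mathbf{P}}(N^{-1/2})$; plugging $\theta=\hat\theta_N^\pi$ and using optimality together with $P\Delta=0$ (so that $\sqrt{N}\Prob_N^\pi\Delta=\G_N^\pi\Delta$) yields $\sqrt{N}(\hat\theta_N^\pi-\theta_0)=mV^{-1}\G_N^\pi\Delta+\mathfrak{o}_{\mathbf{P}}(1)$.

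The main obstacle is the uniform control of the $k\geq 2$ HT-weighted $U$-processes over $\{\pi_k f_\theta:\pnorm{\theta-\theta_0}{}\leq\delta_N\}$. Corollary \ref{cor:asymptotic_equicont} is stated for i.i.d. multipliers, whereas the weights $\xi_i/\pi_i$ are in general neither i.i.d. nor independent (e.g.\ under rejective or stratified designs). The resolution is to condition on $Z^{(N)}$, under which the sampling weights are exchangeable with uniformly bounded $L_{2m,1}$-moments for the designs of Assumption \ref{assumption:sampling_design} (cf.\ \cite{han2018complex}), and to establish an exchangeable-multiplier analogue of Theorem \ref{thm:multiplier_ineq} by re-running its symmetrization/decoupling and truncation arguments pointwise in $Z^{(N)}$. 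With this extension in hand, the remainder of the proof is a direct adaptation of the argument for Theorem \ref{thm:bootstrap_M}.
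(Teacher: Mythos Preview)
Your global strategy---Hoeffding decomposition, consistency, the $(M3)$ linearization, rate via a quadratic comparison, and then reading off the linear representation---is exactly the scheme the paper follows, and those parts carry over from the proof of Theorem \ref{thm:bootstrap_M} essentially verbatim. The divergence is in how the $k\geq 2$ Horvitz--Thompson weighted $U$-processes are controlled, and here your proposal has a real gap.

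You plan to invoke an exchangeable-multiplier version of Theorem \ref{thm:multiplier_ineq}/Corollary \ref{cor:asymptotic_equicont}, arguing that conditional on $Z^{(N)}$ the weights $\xi_i/\pi_i$ are exchangeable. Under the general designs covered by Assumption \ref{assumption:sampling_design} this is not true: the first-order inclusion probabilities $\pi_i=\pi_i(Z^{(N)})$ vary across $i$, so even conditional on $Z^{(N)}$ the vector $(\xi_1/\pi_1,\ldots,\xi_N/\pi_N)$ need not be exchangeable (stratified sampling with unequal $\pi$'s across strata is already a counterexample). Hence the reduction you sketch does not go through, and an ``exchangeable-multiplier analogue'' of the multiplier inequality would not apply.

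The paper sidesteps the dependence structure of the weights entirely by exploiting only their \emph{boundedness} (Assumption \ref{assumption:sampling_design}(B1): $\eta_i=\xi_i/\pi_i\leq 1/\pi_0$). Proposition \ref{prop:multiplier_ineq_sampling} uses the same Abel-summation trick as in Theorem \ref{thm:multiplier_ineq}, but because $\sum_\ell(\eta_{(\ell)}-\eta_{(\ell+1)})=\eta_{(1)}\leq 1/\pi_0$ one obtains the deterministic bound
\[
\E\biggpnorm{\sum_{i_1,\ldots,i_m}\eta_{i_1}\cdots\eta_{i_r}\,f(X_{i_1},\ldots,X_{i_m})}{\mathcal{F}}^p
\leq (1/\pi_0)^{rp}\,\E\max_{1\leq \ell_1,\ldots,\ell_r\leq N}\biggpnorm{\sum_{\substack{i_k\leq \ell_k,\,k\leq r\\ i_k\leq N,\,k>r}} f(X_{i_1},\ldots,X_{i_m})}{\mathcal{F}}^p,
\]
with no moment or exchangeability hypothesis on the weights at all. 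The remaining task is to remove the maximum over $\ell_1,\ldots,\ell_r$. This is done by decoupling and then, coordinate by coordinate, using the $P$-degeneracy of $\pi_k f$: for fixed $X^{(2)},\ldots,X^{(k)}$ one writes
\[
\sum_{i_1\leq \ell_1}\pi_k f(X^{(1)}_{i_1},\ldots)=\sum_{i_1\leq N}\pi_k f(X^{(1)}_{i_1},\ldots)-\sum_{i_1=\ell_1+1}^{N}\pi_k f(X^{(1)}_{i_1},\ldots),
\]
inserts the (vanishing) conditional mean of the tail sum, and applies Jensen to pull the conditional expectation inside the supremum, thereby replacing the range $i_1\leq \ell_1$ by $i_1\leq N$. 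Iterating over coordinates eliminates the maximum and reduces Claim 2 to the standard decoupled degenerate $U$-process bound over $\tilde{\mathcal{F}}_{\delta_N}$, handled exactly as in the proof of Theorem \ref{thm:bootstrap_M}. This argument is both simpler and more general than the exchangeable-multiplier route you propose.
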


For a general sampling design, the asymptotic distribution of $\G_N^\pi \Delta$ is not entirely a trivial problem. We refer the reader to \cite[Proposition 1]{han2018complex} for a summary for the asymptotic variance (more generally, the covariance structure of the limit of $\G_N^\pi$) for various sampling designs illustrated above.

In Theorem \ref{thm:M_estimation_sampling_CLT} we considered a finite-dimensional $M$-estimation problem. It is also possible to consider $M$-estimation problem in an infinite-dimensional setting based on Horvitz-Thompson weighted $U$-statistics:
\begin{align}\label{def:ERM}
\hat{f}_N^\pi \equiv \arg\min_{f \in \mathcal{F}} \sum_{i_1\neq \ldots\neq i_m} \frac{\xi_{i_1}}{\pi_{i_1}}\cdots \frac{\xi_{i_m}}{\pi_{i_m}} f(X_{i_1},\ldots,X_{i_m}),
\end{align}
where $\mathcal{F}$ is a class of symmetric non-degenerate (and typically non-negative) kernels. The quality of the estimator defined in (\ref{def:ERM}) is evaluated through the \emph{excess risk} of $\hat{f}_N^\pi$, denoted $\mathcal{E}_P(\hat{f}_N^\pi)$, where 
\begin{align*}
\mathcal{E}_P(f)\equiv Pf-\inf_{g \in \mathcal{F}} Pg,\quad \forall f \in \mathcal{F}.
\end{align*}
The problem of studying excess risk of empirical risk minimizers under the usual empirical measure has been extensively studied in the 2000s; we only refer the reader to \cite{gine2006concentration,koltchinskii2006local} and references therein. The paper \cite{clemencon2008ranking} extended the scope of ERM to criteria functions based on $U$-statistics of order 2 under the i.i.d. sampling. Our goal here will be a study of the excess risk for the $M$-estimator based on Horvitz-Thompson weighted $U$-statistics as defined in (\ref{def:ERM}) for the general empirical risk minimization problem under general sampling designs.

To this end, let $\mathcal{F}_{\mathcal{E}}(\delta)\equiv \{f \in \mathcal{F}: \mathcal{E}_P(f)<\delta^2\}$, let $\rho_P:\mathcal{F}\times \mathcal{F}\to \R_{\geq 0}$ be such that $\rho_P^2(f,g)\geq P(f-g)^2-\big(P(f-g)\big)^2$, and $D(\delta)\equiv \sup_{f,g \in \mathcal{F}_{\mathcal{E}}(\delta)} \rho_P(f,g)$. 

Now we may state our second main result of this section.
\begin{theorem}\label{thm:M_estimation_sampling_excess_risk}
Suppose Assumption \ref{assumption:sampling_design} holds. Suppose that there exists some $L>0,\kappa\geq 1$ such that 
\begin{align}\label{cond:low_noise}
D(\delta)\leq L\delta^{1/\kappa}.
\end{align}
Further assume that $\mathcal{F}$ is a uniformly bounded VC-subgraph class. Then for any $t,s,u\geq 0$, if
\begin{align*}
r_N\geq K_1 \bigg[\bigg(\frac{\log N}{N}\bigg)^{\frac{\kappa}{4\kappa-2}}+ \bigg(\frac{s\vee t^2}{N}\bigg)^{\frac{\kappa}{4\kappa-2} }+\bigg(\frac{s\vee u}{N}\bigg)^{1/2}\bigg],
\end{align*}
we have
\begin{align*}
& \Prob\big(\mathcal{E}_P(\hat{f}_N^\pi)\geq r_N^2\big)\\
&\leq K_2\big(e^{-s/K_2}/s+ e^{-u^{2/m}/K_2}\big)+\Prob\bigg(\biggabs{\frac{1}{\sqrt{N}}\sum_{i=1}^N\bigg(\frac{\xi_i}{\pi_i}-1\bigg) }>t\bigg).
\end{align*}
Here the constants $K_1,K_2>0$ only depend on $m,\pi_0,\kappa$.
\end{theorem}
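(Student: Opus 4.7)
The plan is to combine a peeling argument with a decomposition of the Horvitz-Thompson weighted $U$-process into super-population noise (handled by standard degenerate $U$-process concentration) and sampling-design noise (handled by the multiplier inequality of Theorem \ref{thm:multiplier_ineq}). Write $\eta_i := \xi_i/\pi_i$, set
\begin{equation*}
U_N^\pi(f) := \frac{1}{N^{(m)}}\sum_{i_1\ne\ldots\ne i_m}\eta_{i_1}\cdots\eta_{i_m} f(X_{i_1},\ldots,X_{i_m}),
\end{equation*}
and let $f^*\in\mathcal{F}$ be a (near) minimizer of $Pf$. Since $\hat f_N^\pi$ minimizes $U_N^\pi$, $U_N^\pi(\hat f_N^\pi-f^*)\le 0$ yields $\mathcal{E}_P(\hat f_N^\pi)\le (P^m-U_N^\pi)(\hat f_N^\pi-f^*)$. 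Setting $\delta_j := 2^{(j+1)/2}r_N$, a standard peeling reduces the claim to bounding, for each $j\ge 0$,
\begin{equation*}
\Prob\biggl(\sup_{f\in\mathcal{F}_{\mathcal{E}}(\delta_j)}|(P^m-U_N^\pi)(f-f^*)|>2^{j-2}r_N^2\biggr).
\end{equation*}

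To handle these suprema I would separate the two sources of noise via $\eta_i=1+(\eta_i-1)$:
\begin{equation*}
\eta_{i_1}\cdots\eta_{i_m}=\sum_{S\subseteq[m]}\prod_{k\in S}(\eta_{i_k}-1),
\end{equation*}
so that $U_N^\pi-U_N^{(m)}=\sum_{\varnothing\ne S\subseteq[m]}\mathcal{R}_S$, where $\mathcal{R}_S(f)$ is the sum over distinct $(i_1,\ldots,i_m)$ of $\prod_{k\in S}(\eta_{i_k}-1)f(X_{i_1},\ldots,X_{i_m})$, divided by $N^{(m)}$. Summing first over the indices outside $S$ turns $\mathcal{R}_S(f)$ into a $|S|$-th order multiplier $U$-process indexed by $\{X_{i_k}\}_{k\in S}$, with kernel a partial-trace of $f$ and multipliers $\{\eta_{i_k}-1\}_{k\in S}$. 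Because conditionally on $Z^{(N)}$ the $\{\eta_i-1\}$ are independent of $\{X_i\}$ by Assumption \ref{assumption:sampling_design}, Theorem \ref{thm:multiplier_ineq} applied conditionally (together with Corollaries \ref{cor:multiplier_ineq} and \ref{cor:asymptotic_equicont}) reduces each $\mathcal{R}_S$ to its decoupled symmetrized version. The residual $U_N^{(m)}-P^m$ is handled by Hoeffding-decomposing into the degenerate kernels $\pi_k f$ and applying Talagrand-type concentration for degenerate $U$-processes (Adamczak, Gin\'e-Lata\l a-Zinn).

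Under the VC-subgraph and uniform boundedness hypothesis on $\mathcal{F}$, a Koltchinskii-Pollard-type chaining bounds the expected supremum of the order-$k$ symmetrized $U$-process on $\mathcal{F}_{\mathcal{E}}(\delta)$ by $\lesssim N^{k/2}D(\delta)(\log N)^{(k-1)/2}$; the low-noise hypothesis $D(\delta)\le L\delta^{1/\kappa}$ then gives the variance-adapted rate. A local Bernstein-type tail at level $s$ for the leading ($k=1$) Hoeffding part produces the contribution $e^{-s/K_2}/s$, and optimizing the critical balance $r_N^2\asymp r_N^{2/\kappa}/\sqrt{N}$ at the slice yields the exponent $\kappa/(4\kappa-2)$. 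The Arcones-Gin\'e Gaussian-chaos tail $\exp(-c u^{2/m})$ for the top-order ($k=m$) degenerate piece gives the $e^{-u^{2/m}/K_2}$ term and the $(s\vee u/N)^{1/2}$ rate component. On the cross-terms with $0<|S|<m$, once the partial-trace kernel has degenerated nearly to a constant it cannot be absorbed by the symmetrized $U$-process and must be controlled separately through the design fluctuation $N^{-1/2}|\sum_i(\eta_i-1)|$, producing the additive tail $\Prob(|N^{-1/2}\sum_i(\xi_i/\pi_i-1)|>t)$.

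The main obstacle is the uniform control of the $2^m-2$ cross-term processes $\mathcal{R}_S$ with $0<|S|<m$: after collapsing the $(m-|S|)$ inert indices, the remaining partial-trace kernel must be Hoeffding-decomposed in its $|S|$ arguments, and each resulting degenerate multiplier $U$-process has to be bounded at the correct variance scale $\lesssim D(\delta)^2$ via Theorem \ref{thm:multiplier_ineq}, with simultaneous care to peel off the purely design-driven constant component so that it appears only through the final additive tail rather than polluting the main empirical-process bound. Once this bookkeeping is carried out, summing the peeled tails geometrically over $j\ge 0$ under the stated lower bound on $r_N$ delivers the claimed estimate with constants $K_1, K_2$ depending only on $m,\pi_0,\kappa$.
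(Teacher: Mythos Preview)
Your route is plausible but genuinely different from, and considerably more elaborate than, the paper's. The paper does \emph{not} expand $\eta_{i_1}\cdots\eta_{i_m}=\sum_S\prod_{k\in S}(\eta_{i_k}-1)$ and does not carry out a peeling argument at the $U$-process level. Instead it Hoeffding-decomposes the kernel $f$ while keeping the $\eta$-weights intact, obtaining
\[
U_N^\pi(f)=(1+\mathfrak o(1))\,m\,\Prob_N^\pi\bar f + C_1\Delta_N,\qquad \bar f(x):=\E f(x,X_2,\ldots,X_m),
\]
with $\Delta_N$ dominated by $\max_{2\le k\le m}N^{-k}\sup_{f\in\mathcal F}\bigl|\sum_{i_1\ne\ldots\ne i_k}\eta_{i_1}\cdots\eta_{i_k}\pi_k(f)\bigr|$. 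The key observation is $\mathcal{E}_P(f)=\mathcal{E}_P(\bar f)$, so $\hat f_N^\pi$ is a $C_2\Delta_N$-approximate minimizer of the \emph{first-order} functional $\bar f\mapsto\Prob_N^\pi\bar f$. All of the localized analysis (your peeling, the Bernstein-type tail giving $e^{-s/K_2}/s$, and the design-fluctuation tail in $t$) is then imported wholesale from the empirical-process ratio bound of \cite[Theorem~4.1]{han2018complex}; no $U$-process localization is performed. The higher-order slack $\Delta_N$ is bounded \emph{uniformly over $\mathcal F$}---not on slices $\mathcal F_{\mathcal E}(\delta_j)$---via Proposition~\ref{prop:multiplier_ineq_sampling} (which exploits $\eta_i\le 1/\pi_0$ a.s.\ to get $p$-th moment bounds directly, rather than Theorem~\ref{thm:multiplier_ineq}) together with the chaos moment bound \cite[Corollary~5.1.8]{de2012decoupling}, yielding $\Prob(N\Delta_N>u)\lesssim e^{-u^{2/m}/C}$. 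This explains why the rate contains only the crude term $(u/N)^{1/2}$: no local-variance refinement is attempted for the $k\ge 2$ pieces. Your approach would force you to redo the localized first-order theory at the $U$-process level and to track $2^m-1$ cross terms; the paper avoids this entirely by reducing to the order-one problem already solved in prior work.
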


Condition (\ref{cond:low_noise}) is comparable to \cite[Assumption 4]{clemenccon2016learning} in the case $m=2$. This condition is well-understood for the usual empirical risk minimization problems, typically under the name of `low-noise' condition, cf. \cite{mammen1999smooth,tsybakov2004optimal}. In particular, if $\kappa$ is close to $1$, then a faster rate than the standard $\sqrt{N}$ rate is possible.

Specializing our result to the case $m=2$ and i.i.d. sampling, we recover \cite[Corollary 6]{clemencon2008ranking}. It is easy to see from the proofs that $\mathcal{F}$ being a VC-subgraph class is not a crucial assumption. Indeed one can replace it with more general super-polynomial uniform entropy conditions with slight modifications of the proofs. We omit these digressions here.

\section{Proofs for Section \ref{section:multiplier_ineq}}\label{section:proof_1}

\subsection{Proof of Theorem \ref{thm:multiplier_ineq}}
\begin{proof}[Proof of Theorem \ref{thm:multiplier_ineq}]
	Since the class $\mathcal{F}$ contains degenerate kernels of order $m-1$, conditional on $\bm{\xi}$, by decoupling (cf. \cite[Theorem 3.1.1]{de2012decoupling}) and symmetrization, we have with $C_m \equiv 2^m \prod_{k=2}^m (k^k-1)$ (as in \cite[Theorem 3.1.1]{de2012decoupling})
	\begin{align}\label{ineq:multiplier_ineq_2}
	&\E \biggpnorm{\sum_{1\leq i_1,\ldots,i_m\leq n} \xi_{i_1}\cdots\xi_{i_m} f(X_{i_1},\ldots,X_{i_m})}{\mathcal{F}}\nonumber\\
    &\leq C_m\cdot \E \biggpnorm{\sum_{1\leq i_1,\ldots,i_m\leq n} \xi_{i_1}\cdots\xi_{i_m} f(X_{i_1}^{(1)},\ldots,X_{i_m}^{(m)})}{\mathcal{F}}\nonumber\\
	&\leq 2^{m} C_m\cdot  \E \biggpnorm{\sum_{1\leq i_1,\ldots,i_m\leq n} \xi_{i_1}\cdots\xi_{i_m}\epsilon_{i_1}^{(1)}\cdots\epsilon_{i_m}^{(m)} f(X_{i_1}^{(1)},\ldots,X_{i_m}^{(m)})}{\mathcal{F}}\nonumber\\
	& = 2^{m} C_m\cdot \E \bigg\lVert\sum_{1\leq i_1,\ldots,i_m\leq n} \abs{\xi_{i_1} }\cdots\abs{\xi_{i_m}} \nonumber\\
	&\qquad\qquad\times \mathrm{sgn}(\xi_{i_1})\epsilon_{i_1}^{(1)}\cdots\mathrm{sgn}(\xi_{i_m})\epsilon_{i_{m}}^{(m)} f(X_{i_1}^{(1)},\ldots,X_{i_m}^{(m)})\bigg\rVert_{\mathcal{F}}.
	\end{align}
	Note here in the second inequality where the symmetrization is carried out according to the degeneracy level of $\mathcal{F}$ due to \cite[Remark 3.5.4 (2)]{de2012decoupling}. The constant $2^{m}$ appears by tracking the constant in the arguments in \cite[pp. 140]{de2012decoupling}. Since $(\mathrm{sgn}(\xi_{1})\epsilon_{1}^\cdot,\ldots, \mathrm{sgn}(\xi_{n})\epsilon_{n}^\cdot)$ is independent of $(\xi_{1},\ldots,\xi_n)$ and has the same distribution as $(\epsilon_1^\cdot,\ldots,\epsilon_n^\cdot)$, we have
	\begin{align}\label{ineq:multiplier_ineq_3}
	&\E \biggpnorm{\sum_{1\leq i_1,\ldots,i_m\leq n} \xi_{i_1}\cdots\xi_{i_m} f(X_{i_1},\ldots,X_{i_m})}{\mathcal{F}} \nonumber\\
	& \leq 2^{m} C_m\cdot  \E \biggpnorm{\sum_{1\leq i_1,\ldots,i_m\leq n} \abs{\xi_{i_1}}\cdots\abs{\xi_{i_m}} \epsilon_{i_1}^{(1)}\cdots \epsilon_{i_m}^{(m)} f(X_{i_1}^{(1)},\ldots,X_{i_m}^{(m)})}{\mathcal{F}}.
	\end{align}
	Let $\abs{\xi_{(1)}}\geq \ldots\geq \abs{\xi_{(n)}}$ be the reversed order statistics of $\{\abs{\xi_i}\}_{i=1}^n$, and $\pi$ be a permutation over $\{1,\ldots,n\}$ such that $\abs{\xi_i}=\abs{\xi_{(\pi(i))} }$. By the invariance of $(P_\epsilon \otimes P)^{mn}$ and the fact that $\bm{\xi}$ is independent of $\bm{X}^\cdot,\bm{\epsilon}^\cdot$, we have that
	\begin{align}
	&\E_{\bm{\epsilon},\bm{X}} \biggpnorm{\sum_{1\leq i_1,\ldots,i_m\leq n} \abs{\xi_{i_1}}\cdots\abs{\xi_{i_m}} \epsilon_{i_1}^{(1)}\cdots \epsilon_{i_m}^{(m)} f(X_{i_1}^{(1)},\ldots,X_{i_m}^{(m)})}{\mathcal{F}}\nonumber\\
	& =\E_{\bm{\epsilon},\bm{X}} \biggpnorm{\sum_{1\leq i_1,\ldots,i_m\leq n} \abs{\xi_{(\pi(i_1))}}\cdots\abs{\xi_{(\pi(i_m)) }} \epsilon_{i_1}^{(1)}\cdots \epsilon_{i_m}^{(m)} f(X_{i_1}^{(1)},\ldots,X_{i_m}^{(m)})}{\mathcal{F}}\nonumber\\
	& = \E_{\bm{\epsilon},\bm{X}} \biggpnorm{\sum_{1\leq i_1,\ldots,i_m\leq n} \abs{\xi_{(i_1)}}\cdots\abs{\xi_{(i_m) }} \epsilon_{\pi^{-1}(i_1)}^{(1)}\cdots \epsilon_{\pi^{-1}(i_m)}^{(m)} f(X_{\pi^{-1}(i_1)}^{(1)},\ldots,X_{\pi^{-1}(i_m)}^{(m)})}{\mathcal{F}}\nonumber\\
	& = \E_{\bm{\epsilon},\bm{X}} \biggpnorm{\sum_{1\leq i_1,\ldots,i_m\leq n} \abs{\xi_{(i_1)}}\cdots\abs{\xi_{(i_m) }} \epsilon_{i_1}^{(1)}\cdots \epsilon_{i_m}^{(m)} f(X_{i_1}^{(1)},\ldots,X_{i_m}^{(m)})}{\mathcal{F}}. 
	\end{align}
	Using $\abs{\xi_{(i)}} = \sum_{\ell\geq i} (\abs{\xi_{(\ell)} }-\abs{\xi_{(\ell+1)} })$ (with $\abs{\xi_{(n+1)}}\equiv 0$) and combining (\ref{ineq:multiplier_ineq_2})-(\ref{ineq:multiplier_ineq_3}), we have that
	\begin{align*}
	&\big(2^{m} C_m\big)^{-1}\cdot \E \biggpnorm{\sum_{1\leq i_1,\ldots,i_m\leq n} \xi_{i_1}\cdots\xi_{i_m} f(X_{i_1},\ldots,X_{i_m})}{\mathcal{F}}\\
	&\leq \E \bigg\lVert\sum_{1\leq i_1,\ldots,i_m\leq n} \sum_{\ell_k\geq i_k, 1\leq k\leq m} (\abs{\xi_{(\ell_1)}}-\abs{\xi_{(\ell_1+1)} })\cdots(\abs{\xi_{(\ell_m)}}-\abs{\xi_{(\ell_m+1)}}) \nonumber\\
	&\qquad\qquad\qquad\qquad\qquad\qquad \times \epsilon_{i_1}^{(1)}\cdots \epsilon_{i_m}^{(m)} f(X_{i_1}^{(1)},\ldots,X_{i_m}^{(m)}) \bigg\lVert_{\mathcal{F}}\nonumber\\
	&\leq  \E \bigg[ \sum_{1\leq \ell_1,\ldots,\ell_m\leq n}(\abs{\xi_{(\ell_1)}}-\abs{\xi_{(\ell_1+1)}})\cdots(\abs{\xi_{(\ell_m)}}-\abs{\xi_{(\ell_m+1)}}) \nonumber\\
	&\qquad\qquad\qquad\qquad \times \E  \biggpnorm{\sum_{\substack{1\leq i_k\leq \ell_k, 1\leq k\leq m} } \epsilon_{i_1}^{(1)}\cdots\epsilon_{i_m}^{(m)} f(X_{i_1}^{(1)},\ldots,X_{i_m}^{(m)})}{\mathcal{F}} \bigg]\nonumber\\
	&\leq  \E \bigg[ \sum_{1\leq \ell_1,\ldots,\ell_m\leq n}  \int^{\abs{\xi_{(\ell_1)}}}_{\abs{\xi_{(\ell_1+1)} }}\cdots \int^{\abs{\xi_{(\ell_m)}}}_{\abs{\xi_{(\ell_m+1)} })}\psi_n(\ell_1,\ldots,\ell_m)\ \d{t_m}\cdots\d{t_1}\bigg]\nonumber\\
	&\leq \E \bigg[ \sum_{1\leq \ell_1,\ldots,\ell_m\leq n}  \int^{\abs{\xi_{(\ell_1)}}}_{\abs{\xi_{(\ell_1+1)}}}\cdots \int^{\abs{\xi_{(\ell_m)}}}_{\abs{\xi_{(\ell_m+1)}})}\\
	&\qquad\qquad\qquad\psi_n(\abs{\{i: \abs{\xi_i}> t_1\}},\ldots,\abs{\{i: \abs{\xi_i}> t_m\}})\ \d{t_m}\cdots\d{t_1}\bigg]\nonumber\\
	&\leq \E \bigg[\int_{\R_{\geq 0}^m }    \psi_n(\abs{\{i: \abs{\xi_i}> t_1\}},\ldots,\abs{\{i: \abs{\xi_i}> t_m\}})\ \d{t_1}\cdots\d{t_m}\bigg].\nonumber
	\end{align*}
	In the second inequality in the above display we changed the order of the summation. The first claim now follows from Fubini's theorem.

      Now suppose that $\psi_n(\ell_1,\ldots,\ell_m)= \bar{\psi}_n(\prod_{k=1}^m \ell_k)$. Then we may further bound the above display by
      \begin{align*}
      & \int_{\R_{\geq 0}^m }   \E \bar{\psi}_n \bigg(\prod_{k=1}^m \sum_{i=1}^n \bm{1}_{\abs{\xi_i}>t_k}\bigg)\ \d{t_1}\ldots\d{t_m}\\
      & = \int_{\R_{\geq 0}^m }   \E \bar{\psi}_n \bigg(  \sum_{1\leq i_1,\ldots,i_m\leq n} \prod_{k=1}^m \bm{1}_{\abs{\xi_{i_k}}>t_k}\bigg)\ \d{t_1}\ldots\d{t_m}\\
      &\leq \int_{\R_{\geq 0}^m }   \bar{\psi}_n \bigg(  \sum_{1\leq i_1,\ldots,i_m\leq n} \E \prod_{k=1}^m \bm{1}_{\abs{\xi_{i_k}}>t_k}\bigg)\ \d{t_1}\ldots\d{t_m} \quad \textrm{(by Jensen's inequality)}\\
      &\leq \int_{\R_{\geq 0}^m }   \bar{\psi}_n \bigg(  \sum_{1\leq i_1,\ldots,i_m\leq n}  \prod_{k=1}^m \Prob\big(\abs{\xi_{i_k}}>t_k\big)^{1/m}\bigg)\ \d{t_1}\ldots\d{t_m},
      \end{align*}
      where the last inequality follows from generalized H\"older's inequality and the assumption that $\bar{\psi}_n$ is non-decreasing.
\end{proof}

\subsection{Proof of Proposition \ref{prop:lower_bound_multiplier_ineq}}

\begin{proof}[Proof of Proposition \ref{prop:lower_bound_multiplier_ineq}]
Let $\alpha = 2/(\gamma-1)$. By \cite[Lemma 6]{han2017sharp}, $\bar{\mathcal{F}}_1\equiv C^{1/\alpha}([0,1])$ is an $\alpha$-full class on $[0,1]$. Let $\mathcal{F}_1$ be the $P$-centered version of $\bar{\mathcal{F}}_{1}$, i.e. $\mathcal{F}_{1}\equiv \{f-Pf: f \in \bar{\mathcal{F}}_{1}\}$. Take $\mathcal{F}\equiv  \{f(x_1,\ldots,x_m)=\prod_{k=1}^{m}\varphi_k(x_k): \varphi_k \in \mathcal{F}_{1}, P\varphi_k^2\leq n^{-2/(2+\alpha)}, 1\leq k\leq m\}$. By Lemma \ref{lem:gine_koltchinskii_matching_bound_ep}, for any $1\leq k\leq m$, and $1\leq \ell_1,\ldots,\ell_m\leq n$, we have
\begin{align}\label{ineq:lower_bound_multiplier_ineq_1}
\E \sup_{ \substack{\varphi_k \in \mathcal{F}_{1}:\\ P\varphi_k^2\leq \ell_k^{-2/(2+\alpha)}}} \biggabs{\sum_{i_k=1}^{\ell_k} \epsilon_{i_k}^k \varphi_k(X_{i_k}^k)}\asymp \sqrt{\ell_k} \big(\ell_k^{-1/(2+\alpha)}\big)^{1-\alpha/2} = \ell_k^{\frac{\alpha}{2+\alpha}}.
\end{align}
Hence for all $1\leq \ell_1,\ldots,\ell_m\leq n$,
\begin{align*}
&\E \biggpnorm{\sum_{\substack{1\leq i_k\leq \ell_k, 1\leq k\leq m}} \epsilon_{i_1}^{(1)}\cdots\epsilon_{i_m}^{(m)} f(X_{i_1}^{(1)},\ldots,X_{i_m}^{(m)})}{\mathcal{F}}\\
& = \prod_{k=1}^m \E \sup_{ \substack{\varphi_k \in \mathcal{F}_{1}:\\ P\varphi_k^2\leq n^{-2/(2+\alpha)}}} \biggabs{\sum_{i_k=1}^{\ell_k} \epsilon_{i_k}^k \varphi_k(X_{i_k}^k)}\\
&\leq \prod_{k=1}^m \E \sup_{ \substack{\varphi_k \in \mathcal{F}_{1}:\\ P\varphi_k^2\leq \ell_k^{-2/(2+\alpha)}}} \biggabs{\sum_{i_k=1}^{\ell_k} \epsilon_{i_k}^k \varphi_k(X_{i_k}^k)}\leq C_{\alpha}\prod_{k=1}^m \ell_k^{\frac{\alpha}{2+\alpha}}= C_{\alpha}  \prod_{k=1}^m \ell_k^{1/\gamma}.
\end{align*}
This proves the upper bound. Next we consider the lower bound. Let $\{\xi'_i\}$ be an independent copy of $\{\xi_i\}$. Then
\begin{align*}
&\E \biggpnorm{\sum_{1\leq i_1,\ldots,i_m\leq n} \xi_{i_1}\cdots\xi_{i_m} f(X_{i_1},\ldots,X_{i_m})}{\mathcal{F}}  \\
&= \prod_{k=1}^m\E \bigg[ \sup_{ \substack{\varphi_k \in \mathcal{F}_{1}:\\ P\varphi_k^2\leq n^{-2/(2+\alpha)}}} \biggabs{\sum_{i=1}^{n} \xi_{i} \varphi_k(X_{i})}\bigg]\\
& \geq  \prod_{k=1}^m \frac{1}{2}\cdot \E \bigg[ \sup_{ \substack{\varphi_k \in \mathcal{F}_{1}:\\ P\varphi_k^2\leq n^{-2/(2+\alpha)}}} \biggabs{\sum_{i=1}^{n} \big(\xi_{i}-\xi'_i\big) \varphi_k(X_{i})}\bigg] \quad ( \textrm{by triangle inequality}) \\
& = \frac{1}{2^m} \prod_{k=1}^m\E \bigg[ \sup_{ \substack{\varphi_k \in \mathcal{F}_{1}:\\ P\varphi_k^2\leq n^{-2/(2+\alpha)}}} \biggabs{\sum_{i=1}^{n} \epsilon_i \abs{\xi_{i}-\xi'_i} \varphi_k(X_{i})}\bigg] \quad ( \textrm{by symmetry of }\xi_i-\xi_i')\\
& \geq \frac{1}{2^m} \prod_{k=1}^m\E \bigg[ \sup_{ \substack{\varphi_k \in \mathcal{F}_{1}:\\ P\varphi_k^2\leq n^{-2/(2+\alpha)}}} \biggabs{\sum_{i=1}^{n} \epsilon_i \E\abs{\xi_{i}-\xi'_i} \varphi_k(X_{i})}\bigg] \quad ( \textrm{by Jensen})\\
&\geq \frac{\pnorm{\xi_1}{1}}{2^m} \prod_{k=1}^m\E \bigg[ \sup_{ \substack{\varphi_k \in \mathcal{F}_{1}:\\ P\varphi_k^2\leq n^{-2/(2+\alpha)}}} \biggabs{\sum_{i=1}^{n} \epsilon_i  \varphi_k(X_{i})}\bigg] \gtrsim n^{m/\gamma},
\end{align*}
where in the last line we used (\ref{ineq:lower_bound_multiplier_ineq_1}) with $\ell_1,\ldots,\ell_m=n$, and the fact that $\E\abs{\xi_i-\xi_i'}\geq \E \abs{\xi_i-\E \xi_i'}= \E \abs{\xi_i}=\pnorm{\xi_1}{1}$ for all $i=1,\ldots,n$. 
\end{proof}

\subsection{Proof of Corollary \ref{cor:asymptotic_equicont}}

\begin{proof}[Proof of Corollary \ref{cor:asymptotic_equicont}]
	Take $\psi_n(\ell_1,\ldots,\ell_m)\equiv a(\ell_1,\ldots,\ell_m) \big(\prod_{k=1}^m \ell_k\big)^{1/2}$. By Theorem \ref{thm:multiplier_ineq},
	\begin{align*}
	&\E \biggpnorm{\sum_{1\leq i_1,\ldots,i_m\leq n} \xi_{i_1}\cdots\xi_{i_m} f(X_{i_1},\ldots,X_{i_m})}{\mathcal{F}_{(n,\ldots,n),n}}\\
	&\leq K_m \int_{\R_{\geq 0}^m}\E \bigg[a\bigg(\sum_{i=1}^n \bm{1}_{\abs{\xi_i}>t_1},\ldots,\sum_{i=1}^n \bm{1}_{\abs{\xi_i}>t_m}\bigg) \prod_{k=1}^m \bigg(\sum_{i=1}^n \bm{1}_{\abs{\xi_i}>t_k}\bigg)^{1/2}\bigg] \ \d{t_1}\cdots\d{t_m}\\
	&\leq K_m \int_{\R_{\geq 0}^m} A_{2,n}(t_1,\ldots,t_m) \bigg\{\E \prod_{k=1}^m \sum_{i=1}^n \bm{1}_{\abs{\xi_i}>t_k}\bigg\}^{1/2} \ \d{t_1}\cdots\d{t_m}\\
	& \leq K_m \int_{\R_{\geq 0}^m} A_{2,n}(t_1,\ldots,t_m) \bigg(\sum_{1\leq i_1,\ldots,i_m\leq n}\prod_{k=1}^m \Prob\big(\abs{\xi_{i_k}}>t_k\big)^{1/m}\bigg)^{1/2} \ \d{t_1}\cdots\d{t_m}\\
	& = n^{m/2} K_m \int_{\R_{\geq 0}^m} A_{2,n}(t_1,\ldots,t_m)  \prod_{k=1}^m \Prob\big(\abs{\xi_{1}}>t_k\big)^{1/2m} \ \d{t_1}\cdots\d{t_m}.
	\end{align*}
	Here
	\begin{align*}
	A_{2,n}(t_1,\ldots,t_m)\equiv  \bigg\{\E \bigg[a^2\bigg(\sum_{i=1}^n \bm{1}_{\abs{\xi_i}>t_1},\ldots,\sum_{i=1}^n \bm{1}_{\abs{\xi_i}>t_m}\bigg)\bigg] \bigg\}^{1/2} \to 0
	\end{align*}
	as long as none of $\{\Prob\big(\abs{\xi_{1}}>t_k\big): 1\leq k\leq m\}$ vanishes. The claim now follows from dominated convergence theorem.
\end{proof}

\section{Proofs for Section \ref{section:multiplier_CLT_bootstrap}}\label{section:proof_2}

\subsection{Proof of Theorem \ref{thm:multiplier_CLT}}

\begin{proof}[Proof of Theorem \ref{thm:multiplier_CLT}]
	We only need to check the asymptotic equi-continuity. For any $\delta>0$, let $\mathcal{F}_{\delta}\equiv \{f-g: f,g \in \mathcal{F}, \pnorm{f-g}{L_2(P^m)}<\delta\}$. For any $f \in \mathcal{F}$, let $\tilde{f}(x_1,\ldots,x_m)\equiv f(x_1,\ldots,x_m)$ if $x_1\neq \cdots\neq x_m$ and $0$ otherwise, and let $\tilde{\mathcal{F}}\equiv \{\tilde{f}: f \in \mathcal{F}\}$. Then the $L_2$ distance associated to the conditional (partially decoupled) Rademacher chaos process
	\begin{align*}
	\bigg\{\frac{1}{(\prod_{k=1}^m \ell_k)^{1/2}}\sum_{1\leq i_k\leq \ell_k, 1\leq k\leq m} \epsilon_{i_1}^{(1)}\cdots\epsilon_{i_m}^{(m)} f(X_{i_1},\ldots,X_{i_m}): f \in \mathcal{F}\bigg\lvert X_1,\ldots,X_n\bigg\}
	\end{align*}
	is given by 
	\begin{align*}
	e_{\bm{\ell}}^2(f,g)\equiv \frac{1}{\prod_{k=1}^m \ell_k}\sum_{1\leq i_k\leq \ell_k, 1\leq k\leq m} (f-g)^2(X_{i_1},\ldots,X_{i_m}).
	\end{align*}
	Let $\pnorm{f}{\bm{\ell}}^2\equiv e_{\bm{\ell}}^2(f,0)$, and $r_{\bm{\ell}}(\delta)\equiv \sup_{f \in \tilde{\mathcal{F}}_\delta} \pnorm{f}{\bm{\ell}}^2$. By the entropy maximal inequality for Rademacher chaos process (cf.  \cite[Corollary 5.1.8]{de2012decoupling}), we have
	\begin{align}\label{ineq:multiplier_clt_1}
	&\E_{\bm{\epsilon}} \biggpnorm{\frac{1}{(\prod_{k=1}^m \ell_k)^{1/2}}\sum_{1\leq i_k\leq \ell_k, 1\leq k\leq m} \epsilon_{i_1}^{(1)}\cdots\epsilon_{i_m}^{(m)} f(X_{i_1},\ldots,X_{i_m}) }{\tilde{\mathcal{F}}_{\delta}}\nonumber\\
	&\leq C_1 \int_0^{r_{\bm{\ell}}(\delta) } \big(\log \mathcal{N}\big(\epsilon, \mathcal{F}, e_{\bm{\ell}}\big)\big)^{m/2}\ \d{\epsilon}\nonumber\\
	& = C_1 \pnorm{F}{\bm{\ell}}\cdot \int_0^{ r_{\bm{\ell}}(\delta)/\pnorm{F}{\bm{\ell}} } \big(\log\mathcal{N}(\epsilon \pnorm{F}{\bm{\ell}},\mathcal{F}, e_{\bm{\ell}})\big)^{m/2}\ \d{\epsilon}\nonumber\\
	& \leq C_1 \pnorm{F}{\bm{\ell}}\cdot \int_0^{ r_{\bm{\ell}}(\delta)/\pnorm{F}{\bm{\ell}} } \big(\sup_Q \log\mathcal{N}\big(\epsilon \pnorm{F}{L_2(Q)},\mathcal{F}, L_2(Q)\big)\big)^{m/2}\ \d{\epsilon}.
	\end{align}
	Without loss of generality we may take $F\geq 1$ so the upper bound in the integral can be replaced by $r_{\bm{\ell}}(\delta)$. By Proposition \ref{prop:LLN_unbalanced_Ustat}, $\pnorm{F}{\bm{\ell}} \to_p \pnorm{F}{L_2(P)}$ as $\ell_1\wedge\ldots\wedge \ell_m \to \infty$, and hence by the integrability on the far right hand side of (\ref{ineq:multiplier_clt_1}) it suffices to show that $r_{\bm{\ell}}(\delta) \to_p 0$ as $\ell_1\wedge\ldots\wedge \ell_m \to \infty$ followed by $\delta \to 0$. Clearly it only remains to show that
	\begin{align}\label{ineq:multiplier_clt_2}
	\sup_{f \in \tilde{\mathcal{F}}_\delta} \biggabs{\frac{1}{\prod_{k=1}^m \ell_k}\sum_{1\leq i_k\leq \ell_k, 1\leq k\leq m} \big( f^2(X_{i_1},\ldots,X_{i_m})-P^m f^2\big)}\to_p 0
	\end{align}
	as $\ell_1\wedge\ldots\wedge \ell_m \to \infty$. To this end we verify (\ref{cond:LLN_unbalanced_Ustat}) in Proposition \ref{prop:LLN_unbalanced_Ustat}. We only do this for $k=m$. Note that $e_{\bm{\ell},j'}$ (introduced in the statement of Proposition \ref{prop:LLN_unbalanced_Ustat}) can be bounded by the $L_1$ distance corresponding to the uniform measure on the (random set) $\{(X_{i_1},\ldots,X_{i_m}): 1\leq i_j\leq \ell_j, 1\leq j\leq m\}$, and hence by the $L_2$ distance $e_{\bm{\ell}}$ (cf. Remark \ref{rmk:relate_e_j_to_l2}). Furthermore it is easy to verify that $\mathcal{N}(\delta, \mathcal{F}^2_M, L_2(Q))\leq \mathcal{N}(\delta/2M, \mathcal{F}_M, L_2(Q))$. Hence
	\begin{align*}
	&\max_{1\leq j'\leq m} \E  \bigg( \frac{\log \mathcal{N}(\delta, \mathcal{F}^2_M, e_{\bm{\ell},j'})}{\ell_{j'}} \bigg)^{1/2}\\
	&\leq (\delta/2M)^{-1} (\ell_1\wedge \cdots \wedge \ell_m)^{-1/2} \E \bigg[\int_0^{\delta/2M} \big(\log \mathcal{N}(\epsilon, \mathcal{F}_M, e_{\bm{\ell}})\big)^{m/2}\ \d{\epsilon}\bigg]\\
	& \leq (\delta/2M)^{-1} (\ell_1\wedge \cdots \wedge \ell_m)^{-1/2}\\
	&\qquad\qquad\int_0^{1} \big(\sup_Q\log \mathcal{N}(\epsilon\pnorm{F}{L_2(Q)}, \mathcal{F}, L_2(Q))\big)^{m/2}\ \d{\epsilon}\cdot \pnorm{F}{L_2(P^m)}\to 0
	\end{align*}
	as long as $\ell_1\wedge \cdots \wedge \ell_m \to \infty$. Hence (\ref{cond:LLN_unbalanced_Ustat}) is verified and Proposition \ref{prop:LLN_unbalanced_Ustat} applies to conclude that (\ref{ineq:multiplier_clt_2}) holds. Combined with (\ref{ineq:multiplier_clt_1}) and decoupling inequality (cf.  \cite[Theorem 3.5.3]{de2012decoupling}), we have shown that for any $\{\delta_{\bm{\ell}}\}$ such that $\delta_{\bm{\ell}}\to 0$ as $\ell_1\wedge \cdots \wedge \ell_m \to \infty$, there exists some sequence $\{a_{\bm{\ell}}\}$ with $a_{\bm{\ell}}\to 0$ as $\ell_1\wedge \cdots \wedge \ell_m \to \infty$ such that
	\begin{align*}
	\E \biggpnorm{\sum_{1\leq i_k\leq \ell_k, 1\leq k\leq m} \epsilon_{i_1}^{(1)}\cdots\epsilon_{i_m}^{(m)} f(X_{i_1}^{(1)},\ldots,X_{i_m}^{(m)})}{\tilde{\mathcal{F}}_{\delta_{\bm{\ell}}}}\leq a_{\bm{\ell}}\bigg(\prod_{k=1}^m \ell_k\bigg)^{1/2}.
	\end{align*}
    Now for any $\{\delta_n\}$ such that $\delta_n \searrow 0$, let $\delta_{\bm{\ell}}\equiv \delta_{\max_k \ell_k}$. Then for any $1\leq \ell_1,\ldots,\ell_m\leq n$, $\tilde{\mathcal{F}}_{\delta_{\bm{\ell}}} = \tilde{\mathcal{F}}_{\delta_{\max_k \ell_k}}\supset \tilde{\mathcal{F}}_{\delta_n}$. The above display holds for such constructed $\{\delta_{\bm{\ell}}\}$. Apply Corollary \ref{cor:asymptotic_equicont} we obtain
	\begin{align*}
	&\E \biggpnorm{n^{-m/2}\sum_{1\leq i_1,\ldots,i_m\leq n} \xi_{i_1}\cdots\xi_{i_m} f(X_{i_1},\ldots,X_{i_m})}{\tilde{\mathcal{F}}_{\delta_n}} \to 0.
	\end{align*}
	This completes the proof for the asymptotic equi-continuity.
\end{proof}

\subsection{Proof of Theorem \ref{thm:bootstrap_CLT}} 

\begin{proof}[Proof of Theorem \ref{thm:bootstrap_CLT}]
	We first prove finite-dimensional convergence. By Cram\'er-Wold and countability of $\mathcal{F}$, we only need to show that for any $f \in L_2^{c,m}(P)$,
	\begin{align}\label{ineq:bootstrap_U_process_1}
	\sup_{\psi \in \mathrm{BL}}\biggabs{\E\bigg[\psi\bigg( \binom{n}{m}^{1/2}\tilde{U}_{n,\xi}^{(m)}(f)\bigg)\bigg\lvert \{X_i\}\bigg]-\E \psi(c\cdot K_P(f)) }\to 0\textrm{ a.s.}
	\end{align}
	By \cite[(4.2.5), page 175]{de2012decoupling} and \cite[Section 2A]{arcones1992bootstrap}, any $f \in L_2^{c,m}(P)$ can be expanded in $L_2(P^m)$ by $f=\sum_{q=1}^\infty c_q h_m^{\psi_q}$, where $\{c_q\}$ is a sequence of real numbers, and $h_m^{\psi_q}(x_1,\ldots,x_m)\equiv \psi_q(x_1)\cdots\psi_q(x_m)$ for some bounded $\psi_q \in L_2^{c,1}(P)$. Fix $\epsilon>0$. Then there exists $Q_\epsilon \in \N$ such that with $f_\epsilon\equiv \sum_{q=1}^{Q_\epsilon} c_q h_m^{\psi_q}$, $\pnorm{f-f_\epsilon}{L_2(P^m)}\leq \epsilon$. The left hand side of (\ref{ineq:bootstrap_U_process_1}) can be further bounded by
	\begin{align}\label{ineq:bootstrap_U_process_2}
	&\sup_{\psi \in \mathrm{BL}}\biggabs{\E\bigg[\psi\bigg( \binom{n}{m}^{1/2}\tilde{U}_{n,\xi}^{(m)}(f)\bigg)\bigg\lvert \{X_i\}\bigg]-\E \psi(c\cdot K_P(f)) }\nonumber\\
	&\leq \sup_{\psi \in \mathrm{BL}}\biggabs{ \E\bigg[\psi\bigg( \binom{n}{m}^{1/2}\tilde{U}_{n,\xi}^{(m)}(f)\bigg)\bigg\lvert \{X_i\}\bigg]-\E\bigg[\psi\bigg( \binom{n}{m}^{1/2}\tilde{U}_{n,\xi}^{(m)}(f^\epsilon)\bigg)\bigg\lvert \{X_i\}\bigg] }\nonumber\\
	&\qquad +\sup_{\psi \in \mathrm{BL}}\biggabs{\E\bigg[\psi\bigg( \binom{n}{m}^{1/2}\tilde{U}_{n,\xi}^{(m)}(f^\epsilon)\bigg)\bigg\lvert \{X_i\}\bigg]-\E \psi(c\cdot K_P(f^\epsilon)) } \nonumber\\
	&\qquad\qquad +\sup_{\psi \in \mathrm{BL}}\bigabs{\E \psi(c\cdot K_P(f^\epsilon))-\E \psi(c\cdot K_P(f)) }\nonumber\\
	& \equiv (I)+(II)+(III).
	\end{align}
	For notational convenience, we let $\bar{f}_\epsilon\equiv f-f_\epsilon$ and $\E^{X}[\cdot]\equiv \E[\cdot|\{X_i\}]$. For the first term in (\ref{ineq:bootstrap_U_process_2}), using the Lipschitz property of $\psi$ and the fact that $\psi$ is bounded by $1$, we have
	\begin{align*}
	(I)^2&\leq \E^X \biggabs{2\wedge\binom{n}{m}^{1/2}\tilde{U}_{n,\xi}^{(m)}(\bar{f}^\epsilon) }^2\\
	&\lesssim  \E^X \bigg(1\wedge n^{-m/2}\sum_{1\leq i_1<\ldots<i_m\leq n} (\xi_{i_1}-1)\cdots(\xi_{i_m}-1) \bar{f}_\epsilon(X_{i_1},\ldots,X_{i_m})\bigg)^2\\
	&\lesssim \E^X_{\xi} \E_R \bigg(1\wedge n^{-m/2}\sum_{1\leq i_1\neq \ldots\neq i_m\leq n} (\xi_{R_{i_1}}-1)\cdots(\xi_{R_{i_m}}-1) \bar{f}_\epsilon(X_{i_1},\ldots,X_{i_m})\bigg)^2\\
	&\lesssim \sum_{ \substack{\alpha_i \in \{1,2\}: \sum_{i=1}^l \alpha_i =2m,\\ \alpha_1\geq\ldots\geq \alpha_l, 1\leq l\leq m} } \E_\xi^X \bigg[1\wedge n^{-m}\E_R \bigg[\prod_{i=1}^l \big(\xi_{R_i}-1\big)^{\alpha_i}\bigg] \\
	&\qquad\qquad\qquad\times \sum_{\substack{i_1\neq \ldots\neq i_m,\\ i_1'\neq \ldots\neq i_m', \\ i_j=i_j', 1\leq j\leq \max\{j:\alpha_j=2\} }} \bar{f}_\epsilon(X_{i_1},\ldots,X_{i_m})\bar{f}_\epsilon(X_{i_1'},\ldots,X_{i_m'}) \bigg]\\
	&\lesssim \sum_{ \substack{\alpha_i \in \{1,2\}: \sum_{i=1}^l \alpha_i =2m,\\ \alpha_1\geq\ldots\geq \alpha_l, 1\leq l\leq m} }   \E \bigg[1\wedge \frac{1}{n}\sum_{i=1}^n \big(\xi_i-1\big)^2\bigg]^{m} \\
	&\qquad\qquad\qquad\times n^{-l}\sum_{\substack{i_1\neq \ldots\neq i_m,\\ i_1'\neq \ldots\neq i_m', \\ i_j=i_j', 1\leq j\leq \max\{j:\alpha_j=2\} }} \bar{f}_\epsilon(X_{i_1},\ldots,X_{i_m})\bar{f}_\epsilon(X_{i_1'},\ldots,X_{i_m'}) ,
	\end{align*}
	where the last inequality follows from Lemma \ref{lem:moment_perm_prod_weight}. By the usual law of large number for $U$-statistics (cf. \cite[Theorem 4.1.4]{de2012decoupling}), we have
	\begin{align*}
	&n^{-l}\sum_{\substack{i_1\neq \ldots\neq i_m,\\ i_1'\neq \ldots\neq i_m', \\ i_j=i_j', 1\leq j\leq \max\{j:\alpha_j=2\} }} \bar{f}_\epsilon(X_{i_1},\ldots,X_{i_m})\bar{f}_\epsilon(X_{i_1'},\ldots,X_{i_m'})\\
	&\to_{\textrm{a.s.}} \E \bar{f}_\epsilon(X_{1},\ldots,X_{m}) \bar{f}_\epsilon(X_{1}',\ldots,X_{m}')\\
	&\qquad\qquad\qquad\qquad (\textrm{where }X_j=X_j'\textrm{ for } 1\leq j\leq\max\{j:\alpha_j=2\})\\
	&\leq  P^m\bar{f}_\epsilon^2\leq \epsilon^2.
	\end{align*}
	Combining the above two displays, we obtain
	\begin{align}\label{ineq:bootstrap_U_process_3}
	\limsup_{n \to \infty} (I) \lesssim_{m,\xi} \epsilon, \textrm{ a.s.}
	\end{align}
	Next we handle the second term in (\ref{ineq:bootstrap_U_process_2}). Note that
	\begin{align*}
	&\binom{n}{m}^{1/2}\tilde{U}_{n,\xi}^{(m)}(f^\epsilon)\\
	&=_d \frac{1}{\binom{n}{m}^{1/2}}\sum_{q=1}^{Q_\epsilon} c_q \sum_{1\leq i_1<\ldots<i_m\leq n} (\xi_{R_{i_1}}-1)\cdots(\xi_{R_{i_m}}-1) \psi_q(X_{i_1})\cdots\psi_q(X_{i_m})\\
	& = \frac{n^{m/2}}{\binom{n}{m}^{1/2}}\sum_{q=1}^{Q_\epsilon} c_qR_m\bigg(\frac{1}{n^{1/2}}\sum_{i=1}^n (\xi_{R_i}-1)\psi_q(X_i), \ldots,\frac{1}{n^{m/2}}\sum_{i=1}^n (\xi_{R_i}-1)^m\psi_q^m(X_i) \bigg)\\
	&\equiv (1+\mathfrak{o}(1))(m!)^{1/2}\sum_{q=1}^{Q_\epsilon} c_q R_m(A_{n,q}^{(1)},\ldots,A_{n,q}^{(m)}),
	\end{align*}
	where $R_m$ is determined through (\ref{eqn:Newton_identity}). Below we determine the limits of $A_{n,q}^{(\ell)}$, $\ell=1,2,3,\ldots,m$.
	\begin{itemize}
		\item[$(\ell=1)$] Apply Lemma \ref{lem:exchangeable_CLT} with $a_i\equiv \psi_q(X_i)-\Prob_n\psi_q$ and $\xi_i$ replaced by $\xi_{R_i}-1$ in our setting, we see that $A_{n,q}^{(1)}\rightsquigarrow_d c\cdot G_P(\psi_q)$ a.s. 
		\item[$(\ell=2)$] Note that
		\begin{align*}
		\E_R^{X,\xi} (A_{n,q}^{(2)})=\frac{1}{n}\sum_{i=1}^n(\xi_i-1)^2\cdot \frac{1}{n}\sum_{i=1}^n \psi_q^2(X_i)\to_{P_\xi} c^2 \E\psi_q^2, \textrm{ a.s.}
		\end{align*}
		Furthermore, 
		\begin{align*}
		&\mathrm{Var}_R^{X,\xi}(A_{n,q}^{(2)})\\
		&= \E_{R}^{{X},{\xi}} \big(A_{n,q}^{(2)}\big)^2 - \left(\E_{R}^{{X},{\xi}} \big(A_{n,q}^{(2)}\big)\right)^2 \\
		& = \E_{R}^{{X},{\xi}} \left[ \frac{1}{n}\sum_{i=1}^n \big(\xi_i-1\big)^2 \psi_q^2(X_{R_i})\right]^2 - \left[ \frac{1}{n}\sum_{i=1}^n \big(\xi_i-1\big)^2 \Prob_n \psi_q^2\right]^2\\
		& = \frac{1}{n^2} \sum_{i,j}\big(\xi_i-1\big)^2\big(\xi_j-1\big)^2\left[\E_{R}^{{X}} \psi_q^2(X_{R_i}) \psi_q^2(X_{R_j}) - (\Prob_n \psi_q^2)^2\right]\\
		& = \frac{1}{n^2}\sum_i \big(\xi_i-1\big)^4 \left[\E_{R}^{{X}} \psi_q^4(X_{R_i})-(\Prob_n \psi_q^2)^2\right]\\
		&\quad + \frac{1}{n^2} \sum_{i\neq j} \big(\xi_i-1\big)^2\big(\xi_j-1\big)^2 \left[\E_{R}^{{X}} \psi_q^2(X_{R_i}) \psi_q^2(X_{R_j}) - (\Prob_n \psi_q^2)^2\right]\\ 
		& \leq  \frac{1}{n^2} \sum_i \big(\xi_i-1\big)^4\cdot \Prob_n \psi_q^4 +\frac{1}{n^2}\bigg(\sum_i (\xi_i-1)^2\bigg)^2\cdot \frac{1}{n-1} \Prob_n \psi_q^4\\
		& \lesssim \frac{1}{n^2} \sum_{i=1}^n \big(\xi_i-1\big)^4 \cdot \Prob_n \psi_q^4\\
		&\lesssim \pnorm{\psi_q}{\infty}^4 \frac{\max_i (\xi_i-1)^2}{n}\cdot \frac{1}{n}\sum_{i=1}^n\big(\xi_i-1)^2 \to_{P_\xi} 0, \textrm{ a.s.}
		\end{align*}
		The first inequality in the above display follows since
		\begin{align*}
		&\E_{R}^{{X}} \psi_q^2(X_{R_i}) \psi_q^2(X_{R_j}) - (\Prob_n \psi_q^2)^2\\
		& = \frac{1}{n(n-1)}\left[\sum_{i\neq j} \psi_q^2(X_{i}) \psi_q^2(X_{j})\right] -(\Prob_n \psi_q^2)^2\\
		& \leq \frac{1}{n-1} (\Prob_n \psi_q^2)^2\leq \frac{1}{n-1} \Prob_n \psi_q^4.
		\end{align*}
		This shows that $A_{n,q}^{(2)}\to_{P_\xi} c^2 \E\psi_q^2$ a.s.
		\item[$(\ell\geq 3)$] Note that
		\begin{align*}
		\E_R^{X,\xi} \abs{A_{n,q}^{(\ell)}}&\leq \frac{1}{n^{\ell/2}}\sum_{i=1}^n\abs{\xi_i-1}^\ell\cdot \frac{1}{n}\sum_{i=1}^n \abs{\psi_q(X_i)}^\ell\\
		&\leq  \bigg(\frac{\max_i \abs{\xi_i-1}^2}{n}\bigg)^{\frac{\ell-2}{2}} \cdot \frac{1}{n}\sum_{i=1}^n \abs{\xi_i-1}^2\cdot \pnorm{\psi_q}{\infty}\\
		&\to_{P_\xi} 0,\textrm{ a.s.}
		\end{align*}
		This shows that $A_{n,q}^{(\ell)}\to_{P_\xi} 0$ a.s.
	\end{itemize}
	We have thus shown $R_m(A_{n,q}^{(1)},\ldots,A_{n,q}^{(m)})\rightsquigarrow_d R_m(G_P(c\psi_q), \E(c\psi_q)^2,0,\ldots,0)= c(m!)^{-1/2}\cdot K_P(\psi_q)$ a.s. By linearity of $K_p$, it follows that $\binom{n}{m}^{1/2}\tilde{U}_{n,\xi}^{(m)}(f^\epsilon)\rightsquigarrow_d c\cdot K_P(f^\epsilon)$ a.s. Hence
	\begin{align}\label{ineq:bootstrap_U_process_4}
	\lim_{n \to \infty} (II)=0, \textrm{ a.s.}
	\end{align}
	For the third term in (\ref{ineq:bootstrap_U_process_2}), note that
	\begin{align}\label{ineq:bootstrap_U_process_5}
	(III) &\leq c \sqrt{\E K_P^2(\bar{f}^\epsilon)}\to 0 \quad (\epsilon \to 0)
	\end{align}
	by the definition of $K_P$ (cf. \cite[page 176]{de2012decoupling}). Combining (\ref{ineq:bootstrap_U_process_2})-(\ref{ineq:bootstrap_U_process_5}) and taking the limits as $n\to \infty$ followed by $\epsilon\to 0$, we see that (\ref{ineq:bootstrap_U_process_1}) holds, and hence proving the finite-dimensional convergence. 
	
	For asymptotic equi-continuity, we need to prove that for any $\epsilon>0$ and $\delta_n \to 0$, $
	\Prob_\xi^X \big(\bigpnorm{\tilde{U}_{n,\xi}^{(m)}(f)}{\mathcal{F}_{\delta_n}}>\epsilon\big)\to_{P_X^\ast} 0$. 
	Hence it suffices to prove that $
	\Prob \big(\bigpnorm{\tilde{U}_{n,\xi}^{(m)}(f)}{\mathcal{F}_{\delta_n}}>\epsilon\big)\to 0$, 
	or even the stronger $
	\E \bigpnorm{\tilde{U}_{n,\xi}^{(m)}(f)}{\mathcal{F}_{\delta_n}}\to 0$. 
	This can be checked using similar arguments as the proofs of Theorem \ref{thm:multiplier_CLT}, and hence completing the proof.
\end{proof}

\section{Proofs for Section \ref{section:bootstrap_M}}\label{section:proof_3}

\begin{proof}[Proof of Theorem \ref{thm:bootstrap_M}]
	For notational convenience, let
	\begin{align*}
	D_{n,\xi}(f)&\equiv D_{n,\xi}^{(m)}(f)\equiv \frac{1}{m!\binom{n}{m}}\sum_{i_1\neq \ldots \neq i_m}\xi_{i_1}\cdots \xi_{i_m}  f(X_{i_1},\ldots,X_{i_m}),\\
	D_{n}(f)&\equiv D_{n}^{(m)}(f)\equiv \frac{1}{m!\binom{n}{m}}\sum_{i_1\neq \ldots \neq i_m} f(X_{i_1},\ldots,X_{i_m}),
	\end{align*}
	and $D(f) \equiv P^m f$. We claim the following:
	\begin{enumerate}
		\item[(Claim 1)] $\sqrt{n}\pnorm{\theta_n^\ast-\theta_0}{}=\mathcal{O}_{P_\xi}(1)$ in $P_X$-probability, and $\sqrt{n}\pnorm{\hat{\theta}_n-\theta_0}{}=\mathcal{O}_{\mathbf{P}}(1)$.
		\item[(Claim 2)] For any $\delta_n\to 0$, 
		\begin{align*}
		& \max_{2\leq k\leq m}\E\sup_{\theta: \pnorm{\theta-\theta_0}{}\leq \delta_n} \biggabs{n^{-k+1}\sum_{i_1\neq \ldots\neq i_k} \xi_{i_1}\cdots\xi_{i_k} \pi_k(f_{\theta}-f_{\theta_0})(X_{i_1},\ldots,X_{i_k}) }\to 0.
		\end{align*}
	\end{enumerate}
	Proofs of these claims will be deferred towards the end of the proof. Then with $g_\theta\equiv f_{\theta}-f_{\theta_0}$ and $\bar{g}_{\theta} = f_{\theta}-f_{\theta_0}- P^m(f_{\theta}-f_{\theta_0})$, we have
	\begin{align}\label{ineq:bootstrap_M_0}
	&n\big(D_{n,\xi}(f_{\theta^\ast_n})-D_{n,\xi}(f_{{\theta}_0})\big)\nonumber\\
	& = n\big(D(f_{\theta^\ast_n})-D(f_{{\theta}_0})\big) + n D_{n,\xi} \big[(f_{\theta_n^\ast}-f_{\theta_0})- P^m (f_{\theta_n^\ast}-f_{\theta_0})\big] \nonumber\\
	&\qquad\qquad\qquad\qquad\qquad  + n  \big(D_{n,\xi}- D\big)  \big(P^m (f_{\theta_n^\ast}-f_{\theta_0})\big) \nonumber\\
	& = -\frac{1}{2}n(\theta_n^\ast-\theta_0)^\top V (\theta_n^\ast-\theta_0) \nonumber\\
	&\qquad\qquad +  \frac{n}{m!\binom{n}{m}}\sum_{i_1\neq \ldots \neq i_m}\xi_{i_1}\cdots \xi_{i_m} \big\{(f_{\theta^\ast_n}-f_{{\theta}_0})(X_{i_1},\ldots,X_{i_m})-P^m(f_{\theta_n^\ast}-f_{\theta_0})\big\}\nonumber\\
	&\qquad\qquad + \frac{n}{m! \binom{n}{m}} \sum_{i_1\neq \ldots\neq i_m} (\xi_{i_1}\cdots \xi_{i_m}-1) P^m(f_{\theta_n^\ast}-f_{\theta_0})+\mathfrak{o}_{\mathbf{P}}(1)\nonumber\\
	&\qquad\qquad\qquad\qquad\qquad\qquad\qquad\qquad\qquad \textrm{(by assumption (M1) and Claim 1)}\nonumber\\
	& = -\frac{1}{2}n\big({\theta}_n^{\ast}-\theta_0\big)^\top V \big({\theta}_n^{\ast}-\theta_0\big) \nonumber\\
	&\qquad\qquad + \frac{n}{m!\binom{n}{m}} \sum_{i_1\neq \ldots \neq i_m} \xi_{i_1}\cdots\xi_{i_m} \sum_{j=1}^m \pi_1\big(\bar{g}_{{\theta}_n^{\ast}}\big)(X_{i_j})\nonumber\\
	&\qquad\qquad + \frac{n}{m! \binom{n}{m}} \sum_{i_1\neq \ldots\neq i_m} \xi_{i_1}\cdots\xi_{i_m} \sum_{2\leq k\leq m}\sum_{j_1<\ldots<j_k} \pi_k\big(\bar{g}_{{\theta}_n^{\pi}}\big)(X_{i_{j_1}},\ldots,X_{i_{j_k}})\nonumber\\
	&\qquad\qquad +\bigg(\frac{1}{m!\binom{n}{m}}\sum_{i_1\neq \ldots\neq i_m}\xi_{i_1}\ldots\xi_{i_m}-1\bigg)\cdot n\big(D(f_{{\theta}_n^{\pi}})-D(f_{\theta_0})\big) + \mathfrak{o}_{\mathbf{P}}(1)\nonumber\\
	& = -\frac{1}{2}n\big({\theta}_n^{\ast}-\theta_0\big)^\top V \big({\theta}_n^{\pi}-\theta_0\big) \nonumber \\
	&\qquad\qquad + \frac{n}{(m-1)!\binom{n}{m}} \sum_{i_1=1}^n \xi_{i_1} \pi_1\big(\bar{g}_{{\theta}_n^{\ast}}\big)(X_{i_1})\bigg(\sum_{i_2,\ldots,i_m: i_1\neq i_2\neq \ldots \neq i_m} \xi_{i_2}\cdots \xi_{i_m}\bigg)\nonumber\\
	&\qquad\qquad + \sum_{2\leq k\leq m} C_{m,k} \frac{n}{\binom{n}{m}} \sum_{i_1\neq\ldots\neq i_k} \xi_{i_1}\cdots \xi_{i_k} \pi_k\big(\bar{g}_{{\theta}_n^{\ast}}\big)(X_{i_1},\ldots,X_{i_k})\nonumber\\
	&\qquad\qquad\qquad\qquad\qquad\qquad\times\bigg(\sum_{i_{k+1},\ldots,i_m: i_1\neq i_2\neq \ldots \neq i_m} \xi_{i_{k+1}}\cdots \xi_{i_m}\bigg)\nonumber\\
	&\qquad\qquad + \bigg(\frac{1}{m!\binom{n}{m}}\sum_{i_1\neq \ldots\neq i_m}\xi_{i_1}\ldots\xi_{i_m}-1\bigg)\cdot n\big(D(f_{{\theta}_n^{\ast}})-D(f_{\theta_0})\big)+\mathfrak{o}_{\mathbf{P}}(1)\nonumber\\
	& = -\frac{1}{2}n\big({\theta}_n^{\ast}-\theta_0\big)^\top V \big({\theta}_n^{\ast}-\theta_0\big)  + (I)+(II)+(III)+\mathfrak{o}_{\mathbf{P}}(1).
	\end{align}

	For $(I)$ in (\ref{ineq:bootstrap_M_0}), note that
	\begin{align}\label{ineq:bootstrap_M_01}
	(I)&=\frac{n}{(m-1)!\binom{n}{m}} \sum_{i_1=1}^n \xi_{i_1} \pi_1\big(\bar{g}_{{\theta}_n^{\ast}}\big)(X_{i_1})\nonumber\\
	&\qquad\qquad\times \bigg(\sum_{i_2,\ldots,i_m: i_2\neq \ldots \neq i_m} \xi_{i_2}\cdots \xi_{i_m}-\sum_{j=2}^m \sum_{\substack{i_2,\ldots,i_m:\\ i_j = i_1, i_2\neq \ldots\neq i_m} }\xi_{i_2}\cdots\xi_{i_j}\cdots\xi_{i_m}\bigg)\nonumber\\
	& = \sum_{i_1=1}^n \xi_{i_1} \pi_1\big(\bar{g}_{{\theta}_n^{\ast}}\big)(X_{i_1})\nonumber\\
	&\qquad\qquad \times \bigg(\frac{n}{(m-1)!\binom{n}{m}} \sum_{i_2,\ldots,i_m: i_2\neq \ldots \neq i_m} \xi_{i_2}\cdots \xi_{i_m} + \mathcal{O}_{\mathbf{P}}(n^{-1}) \bigg) \nonumber\\
	& =  \sum_{i_1=1}^n \xi_{i_1} \pi_1\big(\bar{g}_{{\theta}_n^{\ast}}\big)(X_{i_1})\nonumber\\
	&\qquad\qquad\times \bigg(\frac{n^m}{\binom{n}{m}}\cdot R_{m-1}\bigg(\frac{1}{n}\sum_{i=1}^n\xi_i,\ldots,\frac{1}{n^{m-1}}\sum_{i=1}^n \xi_i^{m-1}\bigg)+\mathcal{O}_{\mathbf{P}}(n^{-1})\bigg)\nonumber\\
	& = \big(1+\mathfrak{o}_{\mathbf{P}}(1)\big)m\sum_{i_1=1}^n \xi_{i_1} \pi_1\big(\bar{g}_{{\theta}_n^{\ast}}\big)(X_{i_1}).
	\end{align}
	Here in the last equality we used Assumption \ref{assumption:weights} and the fact that $R_{m-1}(1,0,\ldots,0)=1/(m-1)!$. For $(II)$ in (\ref{ineq:bootstrap_M_0}), note that
	\begin{align}\label{ineq:bootstrap_M_02}
	(II)& = \sum_{2\leq k\leq m} C_{m,k} \frac{n}{\binom{n}{m}} \sum_{i_1\neq\ldots\neq i_k} \xi_{i_1}\cdots \xi_{i_k} \pi_k\big(\bar{g}_{{\theta}_n^{\ast}}\big)(X_{i_1},\ldots,X_{i_k})\\
	&\qquad\qquad\times\bigg(\sum_{i_{k+1},\ldots,i_m: i_1\neq i_2\neq \ldots \neq i_m} \xi_{i_{k+1}}\cdots \xi_{i_m}\bigg)\nonumber\\
	& = \sum_{2\leq k\leq m} C_{m,k}' \bigg(n^{-(k-1)}\sum_{i_1\neq\ldots\neq i_k} \xi_{i_1}\cdots \xi_{i_k} \pi_k\big(\bar{g}_{{\theta}_n^{\ast}}\big)(X_{i_1},\ldots,X_{i_k})\bigg)\nonumber\\
	&\qquad\qquad \times \bigg(n^{-(m-k)}\sum_{i_{k+1},\ldots,i_m: i_{k+1}\neq \ldots \neq i_m} \xi_{i_{k+1}}\cdots \xi_{i_m}+\mathfrak{o}_{\mathbf{P}}(1)\bigg)\nonumber\\
	& = \mathfrak{o}_{\mathbf{P}}(1).\nonumber
	\end{align}
	Here in the last line we used Claim 2.
	
	For $(III)$ in (\ref{ineq:bootstrap_M_0}), note that
	\begin{align}\label{ineq:bootstrap_M_03}
	(III)& =\bigg(\frac{n^m}{\binom{n}{m}}\cdot R_m\bigg(\frac{1}{n}\sum_{i=1}^n\xi_i, \frac{1}{n^2}\sum_{i=1}^n \xi_i^2,\ldots,\frac{1}{n^m}\sum_{i=1}^n \xi_i^m\bigg)-1\bigg)\cdot \mathcal{O}_{\mathbf{P}}(1) \nonumber\\
	& = \mathfrak{o}_{\mathbf{P}}(1). 
	\end{align}
	Combining (\ref{ineq:bootstrap_M_0})-(\ref{ineq:bootstrap_M_02}), we see that
	\begin{align*}
	&n\big(D_{n,\xi}(f_{\theta^\ast_n})-D_{n,\xi}(f_{{\theta}_0})\big)\\
	& = -\frac{1}{2}n\big({\theta}_n^{\ast}-\theta_0\big)^\top V \big({\theta}_n^{\ast}-\theta_0\big) +\big(1+\mathfrak{o}_{\mathbf{P}}(1)\big) m\sum_{i=1}^n \xi_{i} \pi_1\big(\bar{g}_{{\theta}_n^{\ast}}\big)(X_{i})+\mathfrak{o}_{\mathbf{P}}(1).
	\end{align*}	
	Since $\pi_1(\bar{g}_{\theta})=\pi_1(f_\theta-f_{\theta_0})(x) = (\theta-\theta_0)\cdot \Delta(x)+(\pnorm{\theta-\theta_0}{}\vee n^{-1/2}) r_n(x,\theta)$, it follows that
	\begin{align*}
	&\sum_{i=1}^n \xi_i \pi_1(f_{\theta^\ast_n}-f_{{\theta}_0})(X_{i}) \\
	& = (\theta^\ast_n-\theta_0) \cdot \sum_{i=1}^n\xi_i \Delta(X_i) + \big[\pnorm{\theta^\ast_n-\theta_0}{}\vee n^{-1/2}\big] \sum_{i=1}^n\xi_i r_n(x,\theta^\ast_n)\\
	& = (\theta^\ast_n-\theta_0) \cdot\sum_{i=1}^n\xi_i \Delta(X_i) + \mathfrak{o}_{\mathbf{P}}(1).
	\end{align*}
	Here we used the assumption (M3), $\pnorm{\theta^\ast_n-\theta_0}{}=\mathcal{O}_{\mathbf{P}}(n^{-1/2})$ and the multiplier inequality Theorem \ref{thm:multiplier_ineq} with $m=1$ (see also \cite[Theorem 1 ]{han2017sharp}) to conclude that $\bigabs{\big[\pnorm{\theta^\ast_n-\theta_0}{}\vee n^{-1/2}\big] \sum_{i=1}^n\xi_i r_n(x,\theta^\ast_n)}=\mathfrak{o}_{\mathbf{P}}(1)$.
	
	Combining the above displays, we have
	\begin{align}\label{ineq:bootstrap_M_1}
	&n\big(D_{n,\xi}(f_{\theta^\ast_n})-D_{n,\xi}(f_{{\theta}_0})\big)\nonumber\\
	&= -\frac{1}{2}n(\theta_n^\ast-\theta_0)^\top V (\theta_n^\ast-\theta_0)+ n(\theta^\ast_n-\theta_0)\cdot  \Delta_n^\ast+\mathfrak{o}_{\mathbf{P}}(1),
	\end{align}
	where $\Delta_n^\ast = m\cdot \frac{1}{n}\sum_{i=1}^n \xi_i \Delta(X_i)$. Expand $n\big(D_{n,\xi}(f_{\theta_0+V^{-1}\Delta_n^\ast })-D_{n,\xi}(f_{{\theta}_0})\big)$, we have
	\begin{align}\label{ineq:bootstrap_M_2}
	&n\big(D_{n,\xi}(f_{\theta_0+V^{-1}\Delta_n^\ast })-D_{n,\xi}(f_{{\theta}_0})\big) \nonumber\\
	&= -\frac{1}{2}n(\Delta_n^\ast)^\top V^{-1} \Delta_n^\ast + n(V^{-1}\Delta_n^\ast)\cdot \Delta_n^\ast + \mathfrak{o}_{\mathbf{P}}(1)\nonumber\\
	& = \frac{1}{2}n(\Delta_n^\ast)^\top V^{-1} \Delta_n^\ast + \mathfrak{o}_{\mathbf{P}}(1).
	\end{align}
	Combining (\ref{ineq:bootstrap_M_1})-(\ref{ineq:bootstrap_M_2}), we have
	\begin{align*}
	&\frac{1}{2}n(\Delta_n^\ast)^\top V^{-1} \Delta_n^\ast + \mathfrak{o}_{\mathbf{P}}(1)\\
	 &= n\big(D_{n,\xi}(f_{\theta_0+V^{-1}\Delta_n^\ast })-D_{n,\xi}(f_{{\theta}_0})\big)\\
	&\leq n\big(D_{n,\xi}(f_{\theta^\ast_n})-D_{n,\xi}(f_{{\theta}_0})\big)\\
	&=-\frac{1}{2}n(\theta_n^\ast-\theta_0)^\top V (\theta_n^\ast-\theta_0)+ n(\theta^\ast_n-\theta_0)\cdot \Delta_n^\ast +\mathfrak{o}_{\mathbf{P}}(1),\nonumber
	\end{align*}
	which is equivalent to
	\begin{align*}
	n\bigpnorm{V^{1/2}(\theta_n^\ast-\theta_0)-V^{-1/2}\Delta_n^\ast}{}^2 = \mathfrak{o}_{P}(1)\Leftrightarrow \sqrt{n}(\theta_n^\ast-\theta_0)=V^{-1}\sqrt{n}\Delta_n^\ast+\mathfrak{o}_{\mathbf{P}}(1).
	\end{align*}
	On the other hand, expanding $n\big(D_n(f_{\hat{\theta}_n})-D_n(f_{\theta_0})\big)$ yields that $\sqrt{n}(\hat{\theta}_n-\theta_0) = V^{-1}\sqrt{n}\Delta_n+\mathfrak{o}_{\mathbf{P}}(1)$, where $\Delta_n\equiv m\cdot \frac{1}{n}\sum_{i=1}^n \Delta(X_i)$, and hence 
	\begin{align*}
	\sqrt{n}(\theta_n^\ast-\hat{\theta}_n) = V^{-1}\sqrt{n} \Delta_n^\xi + \mathfrak{o}_{\mathbf{P}}(1)\equiv V^{-1}\cdot m\cdot  \frac{1}{\sqrt{n}}\sum_{i=1}^n (\xi_i-1) \Delta(X_i) + \mathfrak{o}_{\mathbf{P}}(1).
	\end{align*}
	By Lemma \ref{lem:exchangeable_CLT}, $\sqrt{n} \Delta_n^\xi$ is asymptotically normal with covariance matrix $m^2c^2\cdot\mathrm{Cov}(\Delta)$ in $P_X$-probability, while $\sqrt{n}\Delta_n$ has asymptotic covariance $m^2\cdot \mathrm{Cov}(\Delta)$. The theorem then follows from Lemma \ref{lem:transfer_probability} and \cite[Lemma 2.11]{van2000asymptotic}, modulo the claims made in the beginning, the proofs of which we will present now.
	
	First we prove Claim 1. To this end, let $\lambda\equiv \lambda_{\mathrm{min}}(V)/4>0$. Then
	\begin{align*}
	&\lambda n \pnorm{\theta_n^\ast-\theta_0}{}^2\\
	&\leq \frac{1}{2} n (\theta_n^\ast-\theta_0)^\top V (\theta_n^\ast-\theta_0)-\lambda n \pnorm{\theta_n^\ast-\theta_0}{}^2(\equiv Z_n)\\
	&\leq Z_n\bm{1}_{\frac{1}{2}(\theta_n^\ast-\theta_0)^\top V (\theta_n^\ast-\theta_0)-D(f_{\theta_0})+D(f_{\theta_n^\ast})\leq \lambda \pnorm{\theta_n^\ast-\theta_0}{}^2 }\\
	&\qquad + Z_n\bm{1}_{\frac{1}{2}(\theta_n^\ast-\theta_0)^\top V (\theta_n^\ast-\theta_0)-D(f_{\theta_0})+D(f_{\theta_n^\ast})> \lambda \pnorm{\theta_n^\ast-\theta_0}{}^2 }\\
	&\leq -n\big(D(f_{\theta_n^\ast})-D(f_{\theta_0})\big)+\mathfrak{o}_{\mathbf{P}}(1)\qquad \textrm{(by assumption (M1))}\\
	&= n(D_{n,\xi}-D_n)(f_{\theta^\ast_n}-f_{{\theta}_0})+ n(D_{n}-D)(f_{\theta^\ast_n}-f_{{\theta}_0})\\
	&\qquad -n D_{n,\xi}(f_{\theta^\ast_n}-f_{{\theta}_0})+\mathfrak{o}_{\mathbf{P}}(1)\\
	&\leq n(D_{n,\xi}-D_n)(f_{\theta^\ast_n}-f_{{\theta}_0})+ n(D_{n}-D)(f_{\theta^\ast_n}-f_{{\theta}_0})+\mathfrak{o}_{\mathbf{P}}(1)\\
	&\qquad\qquad\qquad\qquad\qquad\qquad\qquad\qquad (\textrm{by definition of }\theta_n^\ast)\\
	&= n(\theta^\ast_n-\theta_0)\cdot \big(\Delta_n^\ast+\Delta_n\big)+\mathfrak{o}_{\mathbf{P}}(1)\\
	&\qquad\qquad\qquad\qquad (\textrm{by similar derivations as in } (\ref{ineq:bootstrap_M_0})-(\ref{ineq:bootstrap_M_1}) \textrm{ using Claim 2})\\
	&= \mathcal{O}_{\mathbf{P}}(1)\cdot \sqrt{n}\pnorm{\theta_n^\ast-\theta_0}{}+\mathfrak{o}_{\mathbf{P}}(1).
	\end{align*}
	Solving for a quadratic inequality we obtain $\sqrt{n}\pnorm{\theta_n^\ast-\theta_0}{}=\mathcal{O}_{\mathbf{P}}(1)$, and hence by Lemma \ref{lem:transfer_probability}, $\sqrt{n}\pnorm{\theta_n^\ast-\theta_0}{}=\mathcal{O}_{P_\xi}(1)$ in $P_X$-probability. Similar arguments conclude that $\sqrt{n}\pnorm{\hat{\theta}_n-\theta_0}{}=\mathcal{O}_{\mathbf{P}}(1)$ and hence Claim 1 is proved.
	
	Next we prove Claim 2. We only need to show that for any $\{\delta_{\bm{\ell}}\}$ such that $\delta_{\bm{\ell}}\to 0$ as $\ell_1\wedge\ldots\wedge \ell_k\to 0$, there exists some uniformly bounded sequence $a_{\bm{\ell}}$ with $a_{\bm{\ell}}\to 0$ as $\ell_1\wedge\ldots\wedge \ell_k\to 0$ such that
	\begin{align*}
	\E \sup_{f \in \tilde{\mathcal{F}}_{\delta_{\bm{\ell}}}} \biggabs{\sum_{1\leq i_j\leq \ell_j, 1\leq j\leq k}  \epsilon_{i_1}^{(1)}\cdots\epsilon_{i_k}^{(k)} \pi_k(f)(X_{i_1}^{(1)},\ldots,X_{i_k}^{(k)})}\leq a_{\bm{\ell}} \bigg(\prod_{j=1}^k \ell_k\bigg)^{1/2} ,
	\end{align*}
	as $\ell_1\wedge\ldots\wedge \ell_k \to \infty$.

	This can be proved following the strategy of that in Theorem \ref{thm:multiplier_CLT}, with a different choice of metric and some resulting technicalities. We provide some details below for the convenience of the reader. Let
	\begin{align*}
	& e_{\bm{\ell},k}^2(f,g)\equiv \frac{1}{\prod_{j=1}^k \ell_j}\sum_{1\leq i_j\leq \ell_j, 1\leq j\leq k} \big(\pi_k(f-g)\big)^2(X_{i_1},\ldots,X_{i_k}),\\
	& \bar{e}_{\bm{\ell},k}^2(f,g)\equiv \frac{1}{\prod_{j=1}^k \ell_j}\sum_{1\leq i_j\leq \ell_j, 1\leq j\leq k} \big(f-g\big)^2(X_{i_1},\ldots,X_{i_k}).
	\end{align*}
	Let $\pnorm{f}{\bm{\ell},k}^2\equiv e_{\bm{\ell},k}^2(f,0)$ and $r_{\bm{\ell},k}(\delta)\equiv \sup_{f \in \tilde{\mathcal{F}}_\delta} \pnorm{f}{\bm{\ell},k}^2$. 	We claim that there exists some $C_0=C_0(k)>0$ such that
	\begin{align}\label{ineq:bootstrap_M_4}
	\log \mathcal{N}\big(\epsilon, \pi_k (\mathcal{F}),\bar{e}_{\bm{\ell},k}\big)\leq  \sum_{r=0}^k \sum_{1\leq j_1<\ldots<j_r\leq k} \log \mathcal{N}\big(\epsilon/C_0, \mathcal{F},e_{(j_1,\ldots,j_r)}\big),
	\end{align}
	where the metric $e_{(j_1,\ldots,j_r)}$ is defined by
	\begin{align*}
	e_{(j_1,\ldots,j_r)}^2(f,g)\equiv \bigg[P^{m-r}\otimes \bigg(\frac{1}{\prod_{q=1}^r \ell_{j_q}} \sum_{1\leq i_{j_q}\leq \ell_{j_q}, 1\leq q\leq r}\delta_{ (X_{i_{j_1}},\ldots,X_{i_{j_r}})}\bigg)\bigg](f-g)^2.
	\end{align*}
	To see this, note that
	\begin{align*}
	\big(\pi_k(f-g)\big)^2 (x_1,\ldots,x_k)  \leq \sum_{r=0}^k \sum_{1\leq i_1<\ldots<i_r\leq k} d_{k,r} \big[ P^{m-r} (f-g)^2(x_{i_1},\ldots,x_{i_r})\big]
	\end{align*}
	holds for some constants $\{d_{k,r}: 0\leq r\leq k\}$, and hence for some $d_k>0$,
	\begin{align*}
	e_{\bm{\ell},k}^2(f,g)&\leq d_k\sum_{r=0}^k \sum_{1\leq j_1<\ldots<j_r\leq k} \frac{1}{\prod_{j=1}^k \ell_j}\sum_{1\leq i_j\leq \ell_j, 1\leq j\leq k}  P^{m-r} (f-g)^2(X_{i_{j_1}},\ldots,X_{i_{j_r}})\\
	&= d_k \sum_{r=0}^k \sum_{1\leq j_1<\ldots<j_r\leq k}  \frac{1}{\prod_{q=1}^r \ell_{j_q}} \sum_{1\leq i_{j_q}\leq \ell_{j_q}, 1\leq q\leq r} P^{m-r} (f-g)^2(X_{i_{j_1}},\ldots,X_{i_{j_r}})\\
	& = d_k \sum_{r=0}^k \sum_{1\leq j_1<\ldots<j_r\leq k} e_{(j_1,\ldots,j_r)}^2 (f,g).
	\end{align*}
	Let $\bar{d_k}\equiv d_k \sum_{r=0}^k \sum_{1\leq j_1<\ldots<j_r\leq k} 1$.  Then
	\begin{align*}
	\log \mathcal{N}\big(\epsilon, \pi_k (\mathcal{F}),\bar{e}_{\bm{\ell},k}\big)\leq  \sum_{r=0}^k \sum_{1\leq j_1<\ldots<j_r\leq k} \log \mathcal{N}\big(\epsilon/\sqrt{\bar{d}_k}, \mathcal{F},e_{(j_1,\ldots,j_r)}\big),
	\end{align*}
	proving the claim (\ref{ineq:bootstrap_M_4}).

	Let $\pnorm{f}{(j_1,\ldots,j_r)}^2\equiv e_{(j_1,\ldots,j_r)}^2(f,0)$. By a conditioning argument and the entropy maximal inequality for Rademacher chaos process (cf. \cite[Corollary 5.1.8 ]{de2012decoupling}), similar to (\ref{ineq:multiplier_clt_1}) we have, with the notation $\mathcal{F}_\delta \equiv \{f_\theta-f_{\theta_0}: \pnorm{\theta-\theta_0}{}\leq \delta\}$ (and $\tilde{\mathcal{F}}$ similarly defined as in the beginning of the proof of Theorem \ref{thm:multiplier_CLT})
	\begin{align}\label{ineq:bootstrap_M_3}
	&\E_\epsilon \biggpnorm{\frac{1}{(\prod_{j=1}^k \ell_j)^{1/2}}\sum_{1\leq i_j\leq \ell_j, 1\leq j\leq k} \epsilon_{i_1}^{(1)}\cdots\epsilon_{i_k}^{(k)} \pi_k(f)(X_{i_1},\ldots,X_{i_k})   }{\tilde{\mathcal{F}}_{\delta}}\nonumber\\
	& \leq C_1 \int_0^{ r_{\bm{\ell},k}(\delta) } \big( \log\mathcal{N}\big(\epsilon, \pi_k(\mathcal{F}),  \bar{e}_{\bm{\ell},k}\big)\big)^{k/2}\ \d{\epsilon} \nonumber\\
	& \leq C_2 \sum_{r=0}^k \sum_{1\leq j_1<\ldots,j_r\leq k} \int_0^{ C_2 r_{\bm{\ell},k}(\delta) } \big( \log\mathcal{N}\big(\epsilon, \mathcal{F},  e_{(j_1,\ldots,j_r)}\big)\big)^{k/2}\ \d{\epsilon} \nonumber  \\
	& \qquad\qquad \qquad\qquad \qquad\qquad\qquad\qquad(\textrm{using the claim (\ref{ineq:bootstrap_M_4})})\nonumber \\
	& = C_2 \sum_{r=0}^k \sum_{1\leq j_1<\ldots,j_r\leq k}  \pnorm{F}{(j_1,\ldots,j_r)} \nonumber\\
	&\qquad\quad\qquad\times \int_0^{ C_2 r_{\bm{\ell},k}(\delta) /\pnorm{F}{(j_1,\ldots,j_r)} } \big( \log\mathcal{N}\big(\epsilon \pnorm{F}{(j_1,\ldots,j_r)} , \mathcal{F},  e_{(j_1,\ldots,j_r)}\big)\big)^{k/2}\ \d{\epsilon}  \nonumber \\
	& \leq  C_2 \sum_{r=0}^k \sum_{1\leq j_1<\ldots,j_r\leq k}  \pnorm{F}{(j_1,\ldots,j_r)} \nonumber\\
&\qquad\quad\qquad\times \int_0^{ C_2 r_{\bm{\ell},k}(\delta) } \sup_Q \big( \log\mathcal{N}\big(\epsilon \pnorm{F}{L_2(Q)} , \mathcal{F},  L_2(Q)\big)\big)^{k/2}\ \d{\epsilon}.  
	\end{align}
	Here $F$ is an envelope for $\mathcal{F}$ which we assume without loss of generality $F\geq 1$. By Proposition \ref{prop:LLN_unbalanced_Ustat}, $\pnorm{F}{(j_1,\ldots,j_r)} \to_p \pnorm{F}{L_2(P)}$ as $\ell_1\wedge\ldots\wedge \ell_k \to \infty$. So we only need to show that $r_{\bm{\ell},k}(\delta) \to_p 0$ as $\ell_1\wedge\ldots\wedge \ell_k \to \infty$ followed by $\delta \to 0$, which reduces to show that
	\begin{align*}
	\sup_{f \in \tilde{\mathcal{F}}_\delta} \biggabs{\frac{1}{\prod_{j=1}^k \ell_j}\sum_{1\leq i_j\leq \ell_j, 1\leq j\leq k} \big( \pi_k(f)^2(X_{i_1},\ldots,X_{i_k})-P^k \pi_k(f)^2\big)}\to_p 0
	\end{align*}
	as $\ell_1\wedge\ldots\wedge \ell_k \to \infty$. This can be shown using similar arguments in the proof of Theorem \ref{thm:multiplier_CLT} by applying Proposition \ref{prop:LLN_unbalanced_Ustat}. The entropy term involving $\pi_k(\mathcal{F})$ can be handled using (\ref{ineq:bootstrap_M_4}) and the above arguments.
\end{proof}

\section{Proofs for Section \ref{section:complex_sampling_M}}

\subsection{Proof of Theorem \ref{thm:M_estimation_sampling_CLT}}

\begin{proof}[Proof of Theorem \ref{thm:M_estimation_sampling_CLT}]
	The proof follows the idea of the proof in Theorem \ref{thm:bootstrap_M}. For notational simplicity, let
	\begin{align*}
	D_{N}^{\pi}(f)&\equiv \frac{1}{m!\binom{N}{m}}\sum_{i_1\neq \ldots \neq i_m}\frac{\xi_{i_1}}{\pi_{i_1}}\cdots \frac{\xi_{i_m}}{\pi_{i_m}} f(X_{i_1},\ldots,X_{i_m}),
	\end{align*}
	and $D(f)=P^m f$ as usual. Further let $\eta_i\equiv \xi_i/\pi_i$ and $g_\theta\equiv f_{\theta}-f_{\theta_0}$. We claim the following:
	\begin{enumerate}
		\item[(Claim 1)] $\sqrt{N}\pnorm{\hat{\theta}_N^{\pi}-\theta_0}{}=\mathcal{O}_{\mathbf{P}}(1)$
		\item[(Claim 2)] For any $\delta_N \to 0$, 
		\begin{align*}
		&\max_{2\leq k\leq m}\sup_{\theta: \pnorm{\theta-\theta_0}{}\leq \delta_N} \biggabs{N^{-k+1}\sum_{i_1\neq \ldots\neq i_k} \eta_{i_1}\cdots\eta_{i_k} \pi_k(f_{\theta}-f_{\theta_0})(X_{i_1},\ldots,X_{i_k}) }=\mathfrak{o}_{\mathbf{P}}(1),
		\end{align*}
	\end{enumerate}
	Then using similar arguments as in the proof of Theorem \ref{thm:bootstrap_M}, we have
	\begin{align*}
	\sqrt{N}\big(\hat{\theta}_N^{\pi}-\theta_0\big) = mV^{-1}\cdot \frac{1}{\sqrt{N}}\sum_{i=1}^N \eta_i \Delta(X_i)+\mathfrak{o}_{\mathbf{P}}(1),
	\end{align*}
	as desired, modulo Claims 1 and 2. Claim 1 can be proved along exactly the same lines as that in the proof of Theorem \ref{thm:bootstrap_M}. Now we prove Claim 2. Note by Proposition \ref{prop:multiplier_ineq_sampling} and using the same notation as in the proof of Theorem \ref{thm:bootstrap_M} 
	\begin{align*}
	&\E \sup_{f \in \tilde{\mathcal{F}}_{\delta_N}} \biggabs{\sum_{i_1,\ldots, i_k} \eta_{i_1}\cdots\eta_{i_k} \pi_k(f)(X_{i_1},\ldots,X_{i_k}) }\\
	&\leq \E \max_{1\leq \ell_1,\ldots,\ell_k\leq N}\sup_{f \in \tilde{\mathcal{F}}_{\delta_N}}\biggabs{\sum_{i_j\leq \ell_j: j=1,\ldots,k} \ \pi_k(f)(X_{i_1},\ldots,X_{i_k}) }\\
	&\leq C \cdot \E \max_{1\leq \ell_1,\ldots,\ell_k\leq N}\sup_{f \in \tilde{\mathcal{F}}_{\delta_N}}\biggabs{\sum_{i_j\leq \ell_j: j=1,\ldots,k} \ \pi_k(f)(X_{i_1}^{(1)},\ldots,X_{i_k}^{(k)}) }\\
	&\qquad\qquad\qquad \textrm{(using the same proofs as in  \cite[Theorem 3.1.1]{de2012decoupling}) }\\
	& = C \cdot \E \bigg[\E_{X^{(1)}} \max_{1\leq \ell_1,\ldots,\ell_k\leq N}\sup_{f \in \tilde{\mathcal{F}}_{\delta_N}}\biggabs{\sum_{i_1=1}^{\ell_1}\sum_{i_j\leq \ell_j: j=2,\ldots,k} \ \pi_k(f)(X_{i_1}^{(1)},\ldots,X_{i_k}^{(k)}) }\bigg]\\
	& = C \cdot \E \bigg[\E_{X^{(1)}} \max_{1\leq \ell_1,\ldots,\ell_k\leq N}\sup_{f \in \tilde{\mathcal{F}}_{\delta_N}}\bigg\lvert\sum_{i_1=1}^{\ell_1}\sum_{i_j\leq \ell_j: j=2,\ldots,k} \ \pi_k(f)(X_{i_1}^{(1)},\ldots,X_{i_k}^{(k)}) \\
	&\qquad \qquad\qquad\qquad\qquad\qquad +\sum_{i_1=\ell_1+1}^{N} \E_{X^{(1)}}\pi_k(f)(X_{i_1}^{(1)},\ldots,X_{i_k}^{(k)})\bigg\lvert \bigg]\\
	&\qquad\qquad\qquad\qquad\qquad\qquad\qquad\qquad\qquad \textrm{(by degeneracy of }\pi_k(f) )\\
	&\leq C \cdot \E \max_{1\leq \ell_2,\ldots,\ell_k\leq N}\sup_{f \in \tilde{\mathcal{F}}_{\delta_N}}\biggabs{\sum_{\substack{1\leq i_1\leq N,\\ i_j\leq \ell_j: j=2,\ldots,k} } \ \pi_k(f)(X_{i_1}^{(1)},\ldots,X_{i_k}^{(k)}) }\\
	&\qquad\qquad\qquad\qquad\qquad\qquad\qquad\qquad\qquad \textrm{(by Jensen's inequality) }\\
	&\leq \cdots\\
	&\leq C \cdot \E\sup_{f \in \tilde{\mathcal{F}}_{\delta_N}}\biggabs{\sum_{1\leq i_1,\ldots,i_k\leq N} \ \pi_k(f)(X_{i_1}^{(1)},\ldots,X_{i_k}^{(k)}) }.
	\end{align*}
	From here the proof of Claim 2 proceeds along the same lines as in the proof of Theorem \ref{thm:bootstrap_M}.
\end{proof}

\subsection{Proof of Theorem \ref{thm:M_estimation_sampling_excess_risk}}

\begin{proof}[Proof of Theorem \ref{thm:M_estimation_sampling_excess_risk}]
	Let $\bar{f}(x)\equiv \E f(x,X_2,\ldots,X_m)$. Note that with the usual notation $\eta_i\equiv \xi_i/\pi_i$, and by similar arguments in the proof of Theorem \ref{thm:M_estimation_sampling_CLT}, 
	\begin{align*}
	&\frac{1}{m!\binom{N}{m}} \sum_{i_1\neq \ldots\neq i_m} \eta_{i_1}\cdots\eta_{i_m} f(X_{i_1},\ldots,X_{i_m})\\
	& =\frac{1}{m!\binom{N}{m}} \sum_{i_1\neq \ldots\neq i_m} \eta_{i_1}\cdots\eta_{i_m} \sum_{j=1}^m \bar{f}(X_{i_j})\\
	&\qquad\qquad + \frac{1}{m!\binom{N}{m}} \sum_{i_1\neq \ldots\neq i_m} \eta_{i_1}\cdots\eta_{i_m}  \sum_{2\leq k\leq m}\sum_{j_1<\ldots<j_k} \pi_k(f)(X_{i_{j_1}},\cdots,X_{i_{j_k}})\\
	& = (1+\mathfrak{o}(1)) m \Prob_N^\pi \bar{f} + C_1 \Delta_N
	\end{align*}
	where
	\begin{align*}
	\Delta_N\leq E_N \equiv  \max_{2\leq k\leq m} N^{-k} \sup_{f \in \mathcal{F}} \biggabs{\sum_{i_1\neq \ldots \neq i_k}\eta_{i_1}\cdots\eta_{i_k} \pi_k(f)(X_{i_1},\ldots,X_{i_k})}.
	\end{align*}
	In other words, $\hat{f}_N^\pi$ is a $C_2 \Delta_N$-empirical risk minimizer of $\arg\min_{\bar{f} \in \bar{\mathcal{F}} } \Prob_N^\pi \bar{f}$. The key observation here is that $\mathcal{E}_P(f)=\mathcal{E}_P(\bar{f})$. On the other hand, it is shown in \cite[Theorem 4.1]{han2018complex} that
	\begin{align*}
	&\Prob\bigg(\sup_{\bar{f} \in \bar{\mathcal{F}}: \mathcal{E}_P(\bar{f})\geq r_N^2} \biggabs{\frac{\mathcal{E}_{\Prob_N^\pi}(\bar{f})}{\mathcal{E}_P(\bar{f})}-1}\geq 3/4\bigg)\\
	&\qquad\leq \frac{C_3}{s}e^{-s/C_3}+\Prob\bigg(\biggabs{\frac{1}{\sqrt{N}}\sum_{i=1}^N\bigg(\frac{\xi_i}{\pi_i}-1\bigg) }>t\bigg)\nonumber
	\end{align*}
	Here the constants $\{C_i\}$ only depend on $\pi_0,\kappa$. Hence \begin{align*}
	& \Prob\big(\big\{\mathcal{E}_P(\hat{f}_N^\pi)\geq r_N^2\}\cap \{\Delta_N\leq r_N^2/(4C_2) \}\big)\\
	&\leq \Prob\big(\big\{\mathcal{E}_P(\hat{f}_N^\pi)\geq r_N^2\}\cap \{\mathcal{E}_{\Prob_N^\pi}(\hat{f}_N^\pi)\leq r_N^2/4 \}\big)\\
	&\leq \Prob\bigg(\sup_{\bar{f} \in \bar{\mathcal{F}}: \mathcal{E}_P(\bar{f})\geq r_N^2} \biggabs{\frac{\mathcal{E}_{\Prob_N^\pi}(\bar{f})}{\mathcal{E}_P(\bar{f})}-1}\geq 3/4\bigg)\\
	&\leq \frac{C_3}{s}e^{-s/C_3}+\Prob\bigg(\biggabs{\frac{1}{\sqrt{N}}\sum_{i=1}^N\bigg(\frac{\xi_i}{\pi_i}-1\bigg) }>t\bigg).
	\end{align*}
	This implies that
	\begin{align*}
	& \Prob\big(\mathcal{E}_P(\hat{f}_N^\pi)\geq r_N^2\big)\\
	&\leq \frac{C_3}{s}e^{-s/C_3}+\Prob\bigg(\biggabs{\frac{1}{\sqrt{N}}\sum_{i=1}^N\bigg(\frac{\xi_i}{\pi_i}-1\bigg) }>t\bigg)+ \Prob\big(\Delta_N>r_N^2/(4C_2)\big)\\
	&\leq \frac{C_3}{s}e^{-s/C_3}+\Prob\bigg(\biggabs{\frac{1}{\sqrt{N}}\sum_{i=1}^N\bigg(\frac{\xi_i}{\pi_i}-1\bigg) }>t\bigg)+ \Prob\big(\Delta_N>u/N\big),
	\end{align*}
	as long as $K_1\geq 2\sqrt{C_2}$. To handle the third probability in the last display, note that for any $p\geq 1$, by Proposition \ref{prop:multiplier_ineq_sampling} and \cite[Corollary 5.1.8 ]{de2012decoupling}
	\begin{align*}
	\E \Delta_N^p \leq \E E_N^p&\leq C_5^p \max_{2\leq k\leq m} N^{-kp} \E \sup_{f \in \mathcal{F}} \biggabs{\sum_{i_1\neq \ldots \neq i_k} \pi_k(f)(X_{i_1},\ldots,X_{i_k})}^p\\
	&\leq C_6^p \max_{2\leq k\leq m} N^{-kp/2} p^{p/(2/m)}=C_6^p N^{-p} p^{p/(2/m)}.
	\end{align*}
	This means that $
	\Prob\big(N\Delta_N>u\big)\leq C_7e^{-u^{2/m}/C_7}$. 
	Combining the estimates in the above displays proves the claim of the theorem.
\end{proof}

\section{Auxiliary results}\label{section:proof_auxiliary_results}

\begin{proposition}\label{prop:LLN_unbalanced_Ustat}
Let $\{X_i\}$ be i.i.d. random variables with law $P$. Let $\mathcal{H}$ be a class of measurable real-valued functions defined on $(\mathcal{X}^m,\mathcal{A}^m)$ with an $P^m$-integrable envelope such that the following holds: for any fixed $\delta>0,M>0, 1\leq k\leq m$, 
\begin{align}\label{cond:LLN_unbalanced_Ustat}
\max_{1\leq j'\leq k} \E  \bigg( \frac{\log \mathcal{N}(\delta, (\pi_k \mathcal{H})_M, e_{\bm{\ell},j'})}{\ell_{j'}} \bigg)^{1/2}\to 0 
\end{align}
holds for any $\ell_1\wedge \cdots \wedge \ell_k \to \infty$. Here for $\bm{\ell}=(\ell_1,\ldots,\ell_k)$ and $\{X_i\}_{i=1}^\infty$, 
\begin{align*}
e_{\bm{\ell},j'}(f,g)\equiv  \frac{1}{\ell_{j'}} \sum_{i_{j'}=1}^{\ell_{j'}} \biggabs{ \frac{1}{\prod_{j\neq j'}{\ell_j}} \sum_{1\leq i_j\leq \ell_j: j\neq j'} (f-g)(X_{i_1},\ldots,X_{i_k})},
\end{align*}
and $(\pi_k \mathcal{H})_M \equiv \{h \bm{1}_{H_k \leq M}: h \in \pi_k \mathcal{H}\}$, where $H_k$ is an envelope for $\pi_k\mathcal{H}$. Then
\begin{align*}
\sup_{h \in \mathcal{H}}\biggabs{\frac{1}{\prod_{k=1}^m \ell_k}\sum_{1\leq i_k\leq \ell_k, 1\leq k\leq m} \big(h(X_{i_1},\ldots,X_{i_m})-P^m h\big)}\to 0
\end{align*}
in $L_1$ as $\ell_1\wedge\ldots\wedge \ell_m \to \infty$. The above display can be replaced by the decoupled version.
\end{proposition}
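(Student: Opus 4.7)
The plan is to reduce the multi-dimensional uniform law of large numbers to a family of one-dimensional uniform LLNs via a Hoeffding decomposition. Writing
\[
h - P^m h \;=\; \sum_{k=1}^m \sum_{A \subset [m],\, |A|=k} (\pi_k h)(x_j : j \in A),
\]
with each $\pi_k h$ $P$-degenerate of order $k-1$ and depending only on the coordinates indexed by $A$, and then summing over $1 \leq i_j \leq \ell_j$ and dividing by $\prod_j \ell_j$, the full average decomposes as a finite linear combination of the terms
\[
T_A(h) \;\equiv\; \frac{1}{\prod_{j \in A} \ell_j} \sum_{1 \leq i_j \leq \ell_j,\, j \in A} (\pi_k h)(X_{i_j} : j \in A).
\]
It therefore suffices to show $\E \sup_{h \in \mathcal{H}} \abs{T_A(h)} \to 0$ for each fixed $A$ with $k = |A| \geq 1$.

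Fix such an $A$, and let $H_k$ denote the envelope of $\pi_k \mathcal{H}$ (which is $P^k$-integrable, inherited from the $P^m$-integrability of the envelope of $\mathcal{H}$ via iterated conditional expectations). Given $\epsilon > 0$, pick $M = M_\epsilon$ so that $P^k H_k \bm{1}_{H_k > M} < \epsilon$. The contribution of $\pi_k h \cdot \bm{1}_{H_k > M}$ to $T_A(h)$ is controlled in $\E \sup$ by $\epsilon$, so it suffices to treat the bounded (but possibly no-longer-degenerate after truncation) class $(\pi_k \mathcal{H})_M$, recentered explicitly by its own mean $P^k(\pi_k h \cdot \bm{1}_{H_k \leq M})$, which is itself $O(\epsilon)$.

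Next, pass to the decoupled version (permitted by the last sentence of the statement; the coupled version follows at the cost of splitting off lower-order diagonal terms of size $O(\ell_j^{-1})$). For a fixed coordinate $j' \in A$, condition on the samples in all coordinates of $A \setminus \{j'\}$: every $g \in (\pi_k \mathcal{H})_M$ reduces to a bounded function $\bar g : \mathcal{X} \to \R$ (the partial empirical average over the conditioning coordinates), with $\|\bar g\|_\infty \leq M$, and the empirical $L_1$-distance between $\bar f$ and $\bar g$ on the $\ell_{j'}$ samples of the $j'$-th coordinate is precisely $e_{\bm{\ell}, j'}(f, g)$. A standard Pollard-style uniform LLN for bounded classes (symmetrization followed by chaining against the empirical $L_1$-covering number) then converts the hypothesis $\E (\log \mathcal{N}(\delta, (\pi_k \mathcal{H})_M, e_{\bm{\ell}, j'})/\ell_{j'})^{1/2} \to 0$ into the required conditional one-dimensional convergence; taking outer expectation and letting $\epsilon \to 0$ closes the argument.

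The main obstacle is the interplay between truncation and degeneracy: $\pi_k h \cdot \bm{1}_{H_k \leq M}$ is in general no longer $P$-degenerate, so one must recenter by $P^k(\pi_k h \cdot \bm{1}_{H_k \leq M})$ and carefully propagate the small bias this introduces through the conditioning step. A secondary technical point is that the entropy pseudometric $e_{\bm{\ell}, j'}$ is an $L_1$-style empirical distance rather than $L_2$; however, this matches exactly the metric needed for Pollard's uniform LLN for uniformly bounded classes, so no additional machinery is required. Measurability issues are bypassed by the standing countability assumption on the function classes.
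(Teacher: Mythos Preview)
Your overall strategy---Hoeffding decomposition, reduce each $\pi_k$-piece to a one-dimensional uniform LLN, control the latter by the entropy hypothesis---is exactly the paper's. But there is a genuine gap in the reduction step, and it is the reason the paper's proof takes the more circuitous route through Rademacher symmetrization and undecoupling.

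The issue is the mismatch between the decoupled setting you work in and the coupled metric $e_{\bm{\ell},j'}$ appearing in the hypothesis. Once you pass to independent copies $\{X_i^{(j)}\}$ (which you must, since in the coupled picture the $j'$-th coordinate shares samples with the others and ``conditioning on $A\setminus\{j'\}$'' does not isolate a fresh one-dimensional empirical process), the natural $L_1$ empirical distance on the reduced class $\{\bar g\}$ is the \emph{decoupled} analogue of $e_{\bm{\ell},j'}$, built from $\{X_i^{(j)}\}$. This is not the same random object as the coupled $e_{\bm{\ell},j'}$ in the hypothesis, and they do not even agree in law (diagonal terms such as $(X_1,X_1)$ versus $(X_1^{(1)},X_1^{(2)})$ have different distributions). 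So the hypothesis, as stated, does not directly feed your Pollard LLN. The paper closes this gap by first symmetrizing in one decoupled coordinate (using the degeneracy of $\pi_k h$, \emph{before} any truncation), then using a Jensen trick to introduce Rademachers on all $k$ decoupled coordinates, and finally applying the undecoupling inequality (de~la~Pe\~na--Gin\'e, Theorem~3.1.2) to return to the coupled picture where the stated $e_{\bm{\ell},j'}$ lives. Only after this undecoupling does the paper truncate and invoke the subgaussian maximal inequality.

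A secondary point: your bias remark is correct in spirit but underspecified. Recentering by the full mean $P^k(\pi_k h\cdot\bm{1}_{H_k\leq M})$ does not make the \emph{partial} mean $P_{j'}(\cdot)$ vanish, so after conditioning the one-dimensional LLN converges not to zero but to the empirical average of $P_{j'}(\pi_k h\cdot\bm{1}_{H_k\leq M})$ over the remaining $k-1$ coordinates. This residual is indeed $O(\epsilon)$ in $\E\sup$ (via $|P_{j'}(\pi_k h\cdot\bm{1}_{H_k\leq M})|\leq P_{j'}(H_k\bm{1}_{H_k>M})$), but you should say so. The paper avoids this complication entirely by symmetrizing first and truncating last.
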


\begin{remark}\label{rmk:relate_e_j_to_l2}
Note that for any $1\leq j'\leq k$,
\begin{align*}
e_{\bm{\ell},j'}(f,g)&= \frac{1}{\ell_{j'}} \sum_{i_{j'}=1}^{\ell_{j'}} \biggabs{ \frac{1}{\prod_{j\neq j'}{\ell_j}} \sum_{1\leq i_j\leq \ell_j: j\neq j'} (f-g)(X_{i_1},\ldots,X_{i_k})}\\
&\leq \frac{1}{\prod_{j=1}^k \ell_j } \sum_{1\leq i_j\leq \ell_j, 1\leq j\leq k} \abs{f-g}(X_{i_1},\ldots,X_{i_k})\\
&\leq \bigg(\frac{1}{\prod_{j=1}^k \ell_j } \sum_{1\leq i_j\leq \ell_j, 1\leq j\leq k} (f-g)^2(X_{i_1},\ldots,X_{i_k})\bigg)^{1/2},
\end{align*}
so we may use $\ell_2$-type metrics to verify the condition (\ref{cond:LLN_unbalanced_Ustat}).
\end{remark}

\begin{proof}[Proof of Proposition \ref{prop:LLN_unbalanced_Ustat}]
Without loss of generality we assume that $\mathcal{H}$ is $P^m$-centered. By decoupling inequality (more precisely, the proof of \cite[Theorem 3.1.1]{de2012decoupling}), we only need to show
\begin{align}\label{ineq:LLN_unbalanced_Ustat}
\E \sup_{h \in \mathcal{H}}\biggabs{\frac{1}{\prod_{k=1}^m \ell_k}\sum_{1\leq i_k\leq \ell_k, 1\leq k\leq m} h(X_{i_1}^{(1)},\ldots,X_{i_m}^{(m)})}\to 0.
\end{align} 
Note that by expanding $h(x_{i_1},\ldots,x_{i_m}) = (\delta_{x_{i_1}}-P+P)\times(\delta_{x_{i_m}}-P+P) h$, we have that
\begin{align*}
&\biggabs{ \frac{1}{\prod_{k=1}^m \ell_k} \sum_{1\leq i_k\leq \ell_k, 1\leq k\leq m} h(X_{i_1}^{(1)},\ldots,X_{i_m}^{(m)})}\nonumber\\
&\leq C_m  \biggabs{\sum_{k=1}^m \sum_{\sigma_k}  \frac{1}{\prod_{j=1}^k \ell_{\sigma_k(j)}} \sum_{1\leq i_{\sigma_k(j)}\leq \ell_{\sigma_k(j)}: 1\leq j\leq k} (\pi_k h)\big(X_{i_{\sigma_k(1)} }^{ (\sigma_k(1))},\ldots,X_{i_{\sigma_k(k)}}^{(\sigma_k(k))}\big)},
\end{align*}
where the summation over $\sigma_k$ runs over all possible selections of subsets of $\{1,\ldots,m\}$ with cardinality $k$. Since $\pi_k h$ is degenerate of order $k-1$, it follows by a simple conditioning argument that
\begin{align*}
&\E\sup_{h \in \mathcal{H}} \biggabs{\sum_{1\leq i_{\sigma_k(j)}\leq \ell_{\sigma_k(j)}: 1\leq j\leq k} (\pi_k h)\big(X_{i_{\sigma_k(1)} }^{(\sigma_k(1))},\ldots,X_{i_{\sigma_k(k)}}^{(\sigma_k(k))}\big)  } \\
& = \E \bigg\{\E_{X^{\sigma_k(1)}} \sup_{h \in \mathcal{H}}\biggabs{\sum_{1\leq i_{\sigma_k(1)}\leq \ell_{\sigma_k(1)}}   \bigg[ \sum_{ \substack{1\leq i_{\sigma_k(j)}\leq \ell_{\sigma_k(j)}, \\2\leq j\leq k}} (\pi_k h)\big(X_{i_{\sigma_k(1)} }^{(\sigma_k(1))},\ldots,X_{i_{\sigma_k(k)}}^{(\sigma_k(k))}\big) \bigg] } \bigg\}\\
&\lesssim \E \bigg\{\E_{X^{\sigma_k(1)}} \sup_{h \in \mathcal{H}}\biggabs{\sum_{1\leq i_{\sigma_k(1)}\leq \ell_{\sigma_k(1)}}   \bigg[ \sum_{\substack{1\leq i_{\sigma_k(j)}\leq \ell_{\sigma_k(j)},\\ 2\leq j\leq k}  } \epsilon_{i_{\sigma_k(1)} }^{ (\sigma_k(1))}(\pi_k h)\big(X_{i_{\sigma_k(1)} }^{(\sigma_k(1))},\ldots,X_{i_{\sigma_k(k)}}^{(\sigma_k(k))}\big) \bigg] } \bigg\}\\
&\qquad\qquad\qquad\qquad \textrm{(by symmetrization for empirical processes)}\\
& = \E\sup_{h \in \mathcal{H}} \biggabs{\sum_{\substack{1\leq i_{\sigma_k(j)}\leq \ell_{\sigma_k(j)},\\ 1\leq j\leq k } } \epsilon_{i_{\sigma_k(1)} }^{ (\sigma_k(1)) }(\pi_k h)\big(X_{i_{\sigma_k(1)} }^{ (\sigma_k(1))},\ldots,X_{i_{\sigma_k(k)}}^{(\sigma_k(k))}\big) }\\
&= \E\sup_{h \in \mathcal{H}} \biggabs{\sum_{\substack{1\leq i_{\sigma_k(j)}\leq \ell_{\sigma_k(j)},\\ 1\leq j\leq k  } } \bigg(\epsilon_{i_{\sigma_k(1)} }^{(\sigma_k(1))}+\E \epsilon_{i_{\sigma_k(2)} }^{(\sigma_k(2))}+\cdots+ \E \epsilon_{i_{\sigma_k(k)} }^{(\sigma_k(k))} \bigg)(\pi_k h)\big(X_{i_{\sigma_k(1)} }^{(\sigma_k(1))},\ldots,X_{i_{\sigma_k(k)}}^{(\sigma_k(k))}\big) }\\
&\leq  \E\sup_{h \in \mathcal{H}} \biggabs{\sum_{\substack{1\leq i_{\sigma_k(j)}\leq \ell_{\sigma_k(j)},\\ 1\leq j\leq k}} \bigg(\epsilon_{i_{\sigma_k(1)} }^{(\sigma_k(1))}+ \epsilon_{i_{\sigma_k(2)} }^{(\sigma_k(2))}+\cdots+ \epsilon_{i_{\sigma_k(k)} }^{(\sigma_k(k))} \bigg)(\pi_k h)\big(X_{i_{\sigma_k(1)} }^{(\sigma_k(1))},\ldots,X_{i_{\sigma_k(k)}}^{(\sigma_k(k))}\big) }\\
&\qquad\qquad\qquad\qquad \textrm{(by Jensen and independence of decoupled Rademachers)}\\
&\lesssim \E\sup_{h \in \mathcal{H}} \biggabs{\sum_{\substack{1\leq i_{\sigma_k(j)}\leq \ell_{\sigma_k(j)},\\ 1\leq j\leq k}} \bigg(\epsilon_{i_{\sigma_k(1)} }+ \epsilon_{i_{\sigma_k(2)} }+\cdots+ \epsilon_{i_{\sigma_k(k)} } \bigg)(\pi_k h)\big(X_{i_{\sigma_k(1)} },\ldots,X_{i_{\sigma_k(k)}}\big) }\\
&\qquad\qquad\qquad\qquad \textrm{(by undecoupling inequality \cite[Theorem 3.1.2]{de2012decoupling})}\\
&\leq \sum_{j'=1}^k \E\sup_{h \in \mathcal{H}} \biggabs{\sum_{1\leq i_{\sigma_k(j)}\leq \ell_{\sigma_k(j)}, 1\leq j\leq k} \epsilon_{i_{\sigma_k(j')} }(\pi_k h)\big(X_{i_{\sigma_k(1)} },\ldots,X_{i_{\sigma_k(k)}}\big) }.
\end{align*}
Combining the above displays yields that
\begin{align*}
& \E \sup_{h \in \mathcal{H}}\biggabs{\frac{1}{\prod_{k=1}^m \ell_k}\sum_{1\leq i_k\leq \ell_k, 1\leq k\leq m} h(X_{i_1}^{(1)},\ldots,X_{i_m}^{(m)})}\\
&\leq C_m \sum_{k=1}^m \sum_{\sigma_k} \sum_{1\leq j'\leq k} \E\sup_{h \in \mathcal{H}} \biggabs{\frac{1}{\prod_{j=1}^k \ell_{\sigma_k(j)}}\sum_{\substack{1\leq i_{\sigma_k(j)}\leq \ell_{\sigma_k(j)},\\ 1\leq j\leq k}} \epsilon_{i_{\sigma_k(j')} }(\pi_k h)\big(X_{i_{\sigma_k(1)} },\ldots,X_{i_{\sigma_k(k)}}\big) }.
\end{align*}
Hence for (\ref{ineq:LLN_unbalanced_Ustat}) to hold it suffices to prove the following claim: 
\begin{align}\label{ineq:LLN_unbalanced_Ustat_1}
\max_{1\leq j'\leq k}\E \sup_{h \in \mathcal{H}}\biggabs{\frac{1}{\prod_{j=1}^k \ell_j}\sum_{1\leq i_j\leq \ell_j, 1\leq j\leq k} \epsilon_{i_{j'}} (\pi_k h)(X_{i_1},\ldots,X_{i_k})}\to 0
\end{align}
holds for any $1\leq k\leq m$ and $\ell_1\wedge \cdots\wedge \ell_k\to \infty$. Recall that $H^{(k)}$ is the envelope for $\mathcal{H}^{(k)}\equiv\pi_k \mathcal{H}$. Then
\begin{align*}
& \max_{1\leq j'\leq k}\E \sup_{h \in \mathcal{H}}\biggabs{\frac{1}{\prod_{j=1}^k \ell_j}\sum_{1\leq i_j\leq \ell_j, 1\leq j\leq k} \epsilon_{i_{j'}} (\pi_k h)(X_{i_1},\ldots,X_{i_k})}\\
&\leq \max_{1\leq j'\leq k}\E \sup_{h \in \mathcal{H}}\biggabs{\frac{1}{\prod_{j=1}^k \ell_j}\sum_{1\leq i_j\leq \ell_j, 1\leq j\leq k} \epsilon_{i_{j'}} (\pi_k h\bm{1}_{H^{(k)}\leq M})(X_{i_1},\ldots,X_{i_k})} \\
&\qquad\qquad\qquad+ P^k H^{(k)}\bm{1}_{H^{(k)}>M}.
\end{align*}
The second term in the above display vanishes as $M \to \infty$ by the integrability of $H^{(k)}$, and hence we only need to show that
\begin{align}\label{ineq:LLN_unbalanced_Ustat_2}
\max_{1\leq j'\leq k}\E \sup_{h \in \mathcal{H}^{(k)}_M}\biggabs{\frac{1}{\prod_{j=1}^k \ell_j}\sum_{1\leq i_j\leq \ell_j, 1\leq j\leq k} \epsilon_{i_{j'}} h(X_{i_1},\ldots,X_{i_k})}\to 0
\end{align}
holds for any $\ell_1\wedge \cdots\wedge \ell_k\to \infty$ followed by $M \to \infty$. To see this, fix $1\leq j'\leq k$ and $\delta>0$, let $\mathcal{H}_{M,\delta}^{(k)}$ be a minimal $\delta$-covering set of $\mathcal{H}_M^{(k)}$ under $e_{\bm{\ell},j'}$. Then
\begin{align*}
& \E_{\epsilon} \sup_{h \in \mathcal{H}^{(k)}_M}\biggabs{\frac{1}{\prod_{j=1}^k \ell_j}\sum_{1\leq i_j\leq \ell_j, 1\leq j\leq k} \epsilon_{i_{j'}} h(X_{i_1},\ldots,X_{i_k})}\\
&\leq \delta+\E_{\epsilon}  \sup_{h \in \mathcal{H}_{M,\delta}^{(k)}}\biggabs{\frac{1}{\prod_{j=1}^k \ell_j}\sum_{1\leq i_j\leq \ell_j, 1\leq j\leq k} \epsilon_{i_{j'}} h(X_{i_1},\ldots,X_{i_k})}\\
&\leq \delta+\frac{C}{\prod_{j=1}^k \ell_j}\sqrt{\log \mathcal{N}(\delta, \mathcal{H}_M^{(k)}, e_{\bm{\ell},j'})}\sup_{h \in \mathcal{H}_{M,\delta}^{(k)}} \bigg[\sum_{i_{j'}=1}^{\ell_{j'}} \bigg(\sum_{1\leq i_j\leq \ell_j: j\neq j'} h(X_{i_1},\ldots,X_{i_k})\bigg)^2\bigg]^{1/2}\\
&\qquad\qquad \textrm{(by subgaussian maximal inequality, cf. \cite[Lemma 2.2.2]{van1996weak})}\\
&\leq \delta + \frac{C}{\prod_{j=1}^k \ell_j}\sqrt{\log \mathcal{N}(\delta, \mathcal{H}_M^{(k)}, e_{\bm{\ell},j'})}\cdot \bigg(M\sqrt{\ell_{j'}} \prod_{j\neq j'} \ell_j\bigg)\\
& \leq \delta + CM \bigg( \frac{\log \mathcal{N}(\delta, \mathcal{H}_M^{(k)}, e_{\bm{\ell},j'})}{\ell_{j'}} \bigg)^{1/2}
\end{align*}
Hence for any $\delta>0$, by the assumption,
\begin{align}\label{ineq:LLN_unbalanced_Ustat_3}
&\max_{1\leq j'\leq k}\E \sup_{h \in \mathcal{H}^{(k)}_M}\biggabs{\frac{1}{\prod_{j=1}^k \ell_j}\sum_{1\leq i_j\leq \ell_j, 1\leq j\leq k} \epsilon_{i_{j'}} h(X_{i_1},\ldots,X_{i_k})}\nonumber\\
&\leq \delta + CM \max_{1\leq j'\leq k} \E  \bigg( \frac{\log \mathcal{N}(\delta, \mathcal{H}_M^{(k)}, e_{\bm{\ell},j'})}{\ell_{j'}} \bigg)^{1/2}\to 0
\end{align}
as $\ell_1\wedge\cdots\wedge \ell_k \to \infty$ followed by $\delta \to 0$, completing the proof.
\end{proof}

\begin{proposition}\label{prop:multiplier_ineq_sampling}
	Suppose Assumption \ref{assumption:sampling_design} holds. Then with $\eta_i \equiv \xi_i/\pi_i$, for any $r\leq m$, and $p\geq 1$,
	\begin{align*}
	&\E \biggpnorm{\sum_{1\leq i_1,\ldots,i_m\leq n} \eta_{i_1}\cdots\eta_{i_r} f(X_{i_1},\ldots,X_{i_m})}{\mathcal{F}}^p\\
	&\leq (1/\pi_0)^{rp} \E \max_{1\leq \ell_1,\ldots,\ell_r\leq n} \biggpnorm{\sum_{\substack{1\leq i_k\leq \ell_k, 1\leq k\leq r,\\ 1\leq i_k\leq n, r+1\leq k\leq m} }  f(X_{i_1},\ldots,X_{i_m})}{\mathcal{F}}^p.
	\end{align*}
\end{proposition}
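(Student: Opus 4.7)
My plan is to follow the summation-by-parts / layer-cake strategy from the proof of Theorem \ref{thm:multiplier_ineq}, exploiting assumption (B1), which ensures that each weight $\eta_i=\xi_i/\pi_i$ lies in $[0,1/\pi_0]$. First I would apply the layer-cake identity $\eta_{i_k}=\int_0^{1/\pi_0}\bm{1}_{\eta_{i_k}>t_k}\,\d{t_k}$ to write
\begin{align*}
\prod_{k=1}^r \eta_{i_k}=\int_{[0,1/\pi_0]^r}\prod_{k=1}^r\bm{1}_{\eta_{i_k}>t_k}\,\d{t_1}\cdots\d{t_r},
\end{align*}
substitute this into the LHS, pull the integral outside the $\mathcal{F}$-norm by the triangle inequality, and finally bound the resulting integral of a nonnegative integrand over a box of volume $(1/\pi_0)^r$ by that volume times the supremum of the integrand. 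This immediately produces the prefactor $(1/\pi_0)^r$ and reduces matters to controlling, for each fixed $(t_1,\ldots,t_r)$, the partial sum restricted to the random subsets $\{i:\eta_i>t_k\}$ in the first $r$ slots.

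The next step exploits the fact that these random subsets, as $t$ varies, are all nested---they are level sets of the single sequence $(\eta_i)$---so they can all be expressed as initial segments under the single permutation $\sigma$ sorting $\eta_{\sigma(1)}\geq\cdots\geq\eta_{\sigma(n)}$. Writing $\ell_k=|\{i:\eta_i>t_k\}|$ and relabeling via $\sigma$, the inner partial sum becomes
\begin{align*}
\sum_{\substack{j_1\leq \ell_1,\ldots,j_r\leq \ell_r,\\ j_{r+1},\ldots,j_m\leq n}} f(X_{\sigma(j_1)},\ldots,X_{\sigma(j_r)},X_{j_{r+1}},\ldots,X_{j_m}),
\end{align*}
and taking the supremum over $t$ is majorized by the maximum over $(\ell_1,\ldots,\ell_r)\in\{0,\ldots,n\}^r$. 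After raising to the $p$-th power and taking expectation, a further change of variables $k_i=\sigma(j_i)$ for $i>r$ (which merely relabels the unconstrained indices) shows that the maximum inside equals $F(X_{\sigma(1)},\ldots,X_{\sigma(n)})^p$, where $F(Y_1,\ldots,Y_n)\equiv\max_{\ell_1,\ldots,\ell_r}\pnorm{\sum_{j_1\leq \ell_1,\ldots,j_m\leq n}f(Y_{j_1},\ldots,Y_{j_m})}{\mathcal{F}}$ is exactly the quantity on the RHS of the proposition. Since $\sigma$ is a measurable function of $(\xi,Z^{(N)})$ and, by the assumed conditional independence $\xi\perp X^{(N)}\mid Z^{(N)}$, is independent of $X^{(N)}$ given $Z^{(N)}$, the i.i.d.\ structure of the pairs $(X_i,Z_i)$ will then allow me to conclude $\E F(X_{\sigma(\cdot)})^p=\E F(X)^p$, which is the RHS.

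The main obstacle will be this final distributional identity. Because $X_i$ can depend on $Z_i$ within each i.i.d.\ pair, $\sigma$ and $X^{(N)}$ are in general not unconditionally independent, so one cannot simply quote the trick that an independent random permutation of an i.i.d.\ vector has the same law. The intended argument is to condition on $Z^{(N)}$: given $Z^{(N)}$, $\sigma$ becomes a measurable function of $\xi$ alone and is therefore independent of $X^{(N)}$; the conditional expectation of $F(X_{\sigma(\cdot)})^p$ given $Z^{(N)}$ can then be handled by averaging over $\sigma$ while the conditional law of $X^{(N)}$ is held fixed, and the outer average over $Z^{(N)}$ is reduced to $\E F(X)^p$ using the joint exchangeability of the pairs $(X_i,Z_i)$. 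Making this conditioning-and-exchanging step precise---in particular, verifying that the relevant rearrangements preserve the distribution of the sum on the RHS---is where I expect the bulk of the technical care to be required.
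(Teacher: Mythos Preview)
Your layer-cake argument is the continuous analogue of the paper's Abel summation $\eta_{(i)}=\sum_{\ell\geq i}(\eta_{(\ell)}-\eta_{(\ell+1)})$: both decompose the weights into nonnegative increments summing to at most $\eta_{(1)}\leq 1/\pi_0$, pull out the maximum over the resulting initial segments, and telescope. So the overall strategy is the same as the paper's, and you have correctly isolated the one nontrivial step---the paper's sketch simply writes $\sum_{\ell_k\geq i_k}(\eta_{(\ell_k)}-\eta_{(\ell_k+1)})$ in place of $\eta_{i_k}$, which tacitly presumes the relabeling $i\mapsto\sigma^{-1}(i)$ has already been justified (as it was, via independence of $\xi$ and $X$, in Theorem~\ref{thm:multiplier_ineq}).

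Your proposed resolution, however, does not go through as stated. Conditioning on $Z^{(N)}$ does make $\sigma$ independent of $X^{(N)}$, but it simultaneously destroys the exchangeability of the $X_i$'s: conditionally on $Z^{(N)}=z$, the $X_i$'s are independent with \emph{heterogeneous} laws $P_{X\mid Z=z_i}$, so permuting them by $\sigma$ does change the conditional law. Unwinding your argument gives $\E F(X_{\sigma(\cdot)})^p=\E\,h(Z_{\sigma(1)},\ldots,Z_{\sigma(n)})$ with $h(z)=\int F(y)^p\prod_j dP_{X\mid z_j}(y_j)$, and the final ``outer exchangeability'' step would require $Z_{\sigma(\cdot)}\stackrel{d}{=}Z$. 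But $\sigma$ is a function of $(\xi,Z)$ and in general reorders $Z$ nontrivially: for instance, if $\pi_i=\pi(Z_i)$ is strictly monotone and $\xi_i\mid Z$ are i.i.d.\ Bernoulli$(\pi_i)$, a short computation with $n=2$ shows $P(Z_{\sigma}=(0,1))\neq P(Z_{\sigma}=(1,0))$ unless $\pi_0=1/2$. So the equality $\E F(X_{\sigma(\cdot)})^p=\E F(X)^p$ you are aiming for is false in the generality of Assumption~\ref{assumption:sampling_design}. You have put your finger on a point the paper's sketch does not address; the argument as written (both yours and the paper's) yields the bound with $X_{\sigma(\cdot)}$ inside the maximum, not $X$ in natural order.
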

\begin{proof}
	The proof is essentially a variant of the proof of Theorem \ref{thm:multiplier_ineq} so we only sketch some details here. Let $\eta_{(1)}\geq \ldots\geq \eta_{(n)}$ be the reversed order statistics of $\{\eta_i\}$. By using $\eta_{(i)}=\sum_{\ell\geq i} \big(\eta_{(\ell)}-\eta_{(\ell+1)}\big)$, we have
	\begin{align*}
	&\E \biggpnorm{\sum_{1\leq i_1,\ldots,i_m\leq n} \eta_{i_1}\cdots\eta_{i_r} f(X_{i_1},\ldots,X_{i_m})}{\mathcal{F}}^p\\
	&=\E \bigg\lVert\sum_{1\leq i_1,\ldots,i_m\leq n} \sum_{\ell_k\geq i_k, 1\leq k\leq r} (\eta_{(\ell_1)}-\eta_{(\ell_1+1)})\cdots(\eta_{(\ell_r)}-\eta_{(\ell_r+1)}) \nonumber\\
	&\qquad\qquad\qquad\qquad\qquad\qquad \times f(X_{i_1},\ldots,X_{i_m}) \bigg\lVert_{\mathcal{F}}^p\nonumber\\
	&\leq \E \bigg[ \bigg(\sum_{1\leq \ell_1,\ldots,\ell_r\leq n} (\eta_{(\ell_1)}-\eta_{(\ell_1+1)})\cdots(\eta_{(\ell_r)}-\eta_{(\ell_r+1)})\bigg)^p\nonumber\\
	&\qquad\qquad\qquad\qquad \times \max_{1\leq \ell_1,\ldots\ell_r\leq n} \biggpnorm{\sum_{\substack{1\leq i_k\leq \ell_k, 1\leq k\leq r,\\ 1\leq i_k\leq n, r+1\leq k\leq m} }  f(X_{i_1},\ldots,X_{i_m})}{\mathcal{F}}^p \bigg]\nonumber\\
	& \leq (1/\pi_0)^{pr} \E \max_{1\leq \ell_1,\ldots,\ell_r\leq n} \biggpnorm{\sum_{\substack{1\leq i_k\leq \ell_k, 1\leq k\leq r,\\ 1\leq i_k\leq n, r+1\leq k\leq m} }  f(X_{i_1},\ldots,X_{i_m})}{\mathcal{F}}^p,
	\end{align*}
	as desired.
\end{proof}

Below we collect some technical lemmas that will be useful in the proofs.

\begin{lemma}[Lemma in \cite{huskova1993consistency}]\label{lem:moment_perm_prod_weight}
	Let $(\xi_1,\ldots,\xi_n)$ be a non-negative vector such that $\sum_{i=1}^n \xi_i = n$. Let $R=(R_1,\ldots,R_n)$ be a random permutation of $\{1,\ldots,n\}$. Then for any $l \in \N$ and $\bm{\alpha}=(\alpha_1,\ldots,\alpha_l) \in \N^l$,
	\begin{align*}
	\biggabs{\E_R \bigg[\prod_{i=1}^l \big(\xi_{R_i}-1\big)^{\alpha_i}\bigg]} \leq C_{l,\bm{\alpha}} n^{-l} \bigg[\sum_{i=1}^n \big(\xi_i-1\big)^2\bigg]^{\sum_i \alpha_i /2}.
	\end{align*}
\end{lemma}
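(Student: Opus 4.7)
The plan is to set $a_i \equiv \xi_i - 1$, so that the hypothesis $\sum_i \xi_i = n$ becomes the centering condition $S_1 \equiv \sum_{i=1}^n a_i = 0$, while the target bound is phrased in terms of $S_2 \equiv \sum_{i=1}^n a_i^2$. Since $R$ is a uniformly random permutation of $\{1,\ldots,n\}$, for $l\leq n$
\begin{align*}
\E_R \prod_{i=1}^l a_{R_i}^{\alpha_i} = \frac{1}{(n)_l}\sum_{j_1\neq\cdots\neq j_l} a_{j_1}^{\alpha_1}\cdots a_{j_l}^{\alpha_l},
\end{align*}
where $(n)_l \equiv n(n-1)\cdots(n-l+1)\geq n^l/C_l$ for some constant $C_l$ depending only on $l$; the case $n<l$ being trivial.

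The main step is to rewrite the sum over \emph{distinct} indices as a linear combination of products of power sums $S_k\equiv \sum_i a_i^k$. By the standard M\"obius inversion on the partition lattice $\Pi_l$ of $\{1,\ldots,l\}$,
\begin{align*}
\sum_{j_1\neq\cdots\neq j_l} a_{j_1}^{\alpha_1}\cdots a_{j_l}^{\alpha_l} = \sum_{\pi\in\Pi_l} c_\pi \prod_{B\in\pi} S_{\alpha(B)},\qquad \alpha(B)\equiv \sum_{i\in B}\alpha_i,
\end{align*}
with combinatorial coefficients $c_\pi$ depending only on $\pi$ and $\bm{\alpha}$ (independent of $n$ and of the $a_i$'s). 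Any partition $\pi$ containing a singleton block $\{i\}$ with $\alpha_i=1$ contributes a factor of $S_1=0$ and may therefore be discarded; for every surviving partition, each block $B$ then satisfies $\alpha(B)\geq 2$.

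The estimate is closed via the elementary $\ell^p$-norm monotonicity $\|a\|_k\leq \|a\|_2$ for $k\geq 2$: this gives, for each $k\geq 2$,
\begin{align*}
|S_k|\leq \sum_i |a_i|^k\leq \bigg(\sum_i a_i^2\bigg)^{k/2}=S_2^{k/2}.
\end{align*}
Applied blockwise we obtain $\bigabs{\prod_{B\in\pi} S_{\alpha(B)}}\leq S_2^{\sum_B \alpha(B)/2}=S_2^{\sum_i \alpha_i/2}$; absorbing the finite sum $\sum_\pi |c_\pi|$ (which depends only on $l$ and $\bm{\alpha}$) together with the factor $1/(n)_l\leq C_l/n^l$ into a single constant $C_{l,\bm{\alpha}}$ then finishes the proof.

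The only delicate point is the combinatorial bookkeeping in the M\"obius expansion coupled with the observation that precisely the singleton blocks with $\alpha_i=1$ (and no others) vanish because $S_1=0$; once this is in place the rest is routine inequality chasing, and no probabilistic input beyond the definition of the uniform permutation expectation is required.
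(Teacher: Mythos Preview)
Your argument is correct. The paper does not give its own proof of this lemma; it is quoted directly from \cite{huskova1993consistency} without argument, so there is nothing to compare against on the paper's side. Your approach---expressing the sum over distinct indices via M\"obius inversion on the partition lattice, eliminating every partition containing a singleton block $\{i\}$ with $\alpha_i=1$ because it carries a factor $S_1=0$, and then using $\lvert S_k\rvert\le S_2^{k/2}$ for $k\ge2$ blockwise---is exactly the natural one and goes through cleanly. One small remark: the M\"obius coefficients $c_\pi=\mu(\hat 0,\pi)=\prod_{B\in\pi}(-1)^{|B|-1}(|B|-1)!$ in fact depend only on $\pi$, not on $\bm{\alpha}$, so the final constant can be taken to depend on $l$ alone; writing $C_{l,\bm{\alpha}}$ is of course still valid.
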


The following result is taken from \cite[Lemma 3.6.15]{van1996weak}.
\begin{lemma}\label{lem:exchangeable_CLT}
	Let $(a_i,\ldots,a_n)$ be a vector and $(\xi_1,\ldots,\xi_n)$ be a vector of exchangeable random variables. Suppose that
	\begin{align*}
	\bar{a}_n =\frac{1}{n}\sum_{i=1}^n a_i= 0,\quad \frac{1}{n}\sum_{i=1}^n a_i^2 \to \sigma^2,\quad \lim_{M\to \infty}\limsup_{n\to \infty} \frac{1}{n}\sum_{i=1}^n a_i^2\bm{1}_{\abs{a_i}>M}=0,
	\end{align*}
	and
	\begin{align*}
	\bar{\xi}_n=\frac{1}{n}\sum_{i=1}^n \xi_i = 0,\quad \frac{1}{n} \sum_{i=1}^n \xi_i^2 \to_{P_\xi} \tau^2,\quad  \frac{1}{n}\max_{1\leq i\leq n}\xi_i^2 \to_{P_\xi} 0.
	\end{align*}
	Then $
	\frac{1}{\sqrt{n}} \sum_{i=1}^n a_i \xi_i \rightsquigarrow_d \mathcal{N}\big(0,\sigma^2\tau^2\big)$. 
\end{lemma}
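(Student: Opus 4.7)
The plan is to turn the exchangeable sum into a combinatorial one via a random permutation, invoke Hoeffding's combinatorial CLT, and cure the lack of a uniform bound on the $a_i$'s by truncation.

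First, I would use exchangeability to write $n^{-1/2}\sum_i a_i\xi_i \equald S_n \equiv n^{-1/2}\sum_i a_{\pi(i)}\xi_i$ for a uniform random permutation $\pi$ on $\{1,\ldots,n\}$ independent of $\xi$, and then condition on $\xi$ so that $S_n$ becomes a combinatorial sum with fixed coefficients $\xi_i$. A direct moment calculation using $\bar a_n = \bar\xi_n = 0$ will give
\begin{align*}
\E[S_n\mid \xi] = 0, \qquad \mathrm{Var}(S_n\mid \xi) = \frac{n}{n-1}\cdot \biggl(\frac{1}{n}\sum_i a_i^2\biggr)\biggl(\frac{1}{n}\sum_i \xi_i^2\biggr) \to_{P_\xi} \sigma^2\tau^2.
\end{align*}

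Second, I would invoke a classical Hoeffding/H\'ajek combinatorial CLT to get conditional asymptotic normality of $S_n$ given $\xi$. This requires a Lindeberg-type condition amounting roughly to $(\max_i a_i^2/n)(\max_i \xi_i^2/n) \to_{P_\xi} 0$. The second factor vanishes by hypothesis, but the first need not, since only uniform integrability of $\{a_i^2\}$ is assumed; this is where the main work lies.

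To handle this, I would truncate: fix $M>0$ and split $a_i = a_i^M + \tilde a_i^M$ with $a_i^M \equiv a_i \bm{1}_{\abs{a_i}\leq M}$. For the truncated piece, $\max_i \abs{a_i^M}^2 \leq M^2$, so the combinatorial CLT applies and yields $n^{-1/2}\sum_i (a_i^M - \bar a_n^M)\xi_i \rightsquigarrow \mathcal{N}(0,\sigma_M^2\tau^2)$ with $\sigma_M^2 \equiv \lim_n n^{-1}\sum_i (a_i^M - \bar a_n^M)^2$. For the tail piece, the same exchangeability-based variance computation as above will give
\begin{align*}
\E\biggl(\frac{1}{\sqrt n}\sum_i \tilde a_i^M\xi_i\biggr)^2 \lesssim \biggl(\frac{1}{n}\sum_i a_i^2\bm{1}_{\abs{a_i}>M}\biggr)\cdot \E\biggl(\frac{1}{n}\sum_i \xi_i^2\biggr),
\end{align*}
which tends to $0$ as $M\to\infty$ uniformly in $n$ by the uniform integrability hypothesis on $\{a_i^2\}$. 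Sending $n\to\infty$ then $M\to\infty$, and noting $\sigma_M^2\to\sigma^2$ (again by uniform integrability), a standard L\'evy continuity / triangular-array argument then combines the two pieces to give $S_n \rightsquigarrow \mathcal{N}(0,\sigma^2\tau^2)$. The main obstacle is precisely the truncation step: one has to marry the combinatorial CLT's Lindeberg condition, which genuinely wants bounded coefficients, with an exchangeability-adapted $L_2$ control of the tail that cannot simply borrow from the i.i.d. case.
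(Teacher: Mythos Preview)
The paper does not prove this lemma; it simply quotes it from \cite[Lemma~3.6.15]{van1996weak}. Your outline---reduce to a permutation statistic via exchangeability, apply a H\'ajek/Hoeffding combinatorial CLT conditionally on $\xi$, and truncate the $a_i$'s to force the Lindeberg/Noether condition---is exactly the standard argument used in that reference, so there is nothing to contrast.

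One small technical point in your tail estimate: you bound the \emph{unconditional} second moment of the truncated remainder by $\bigl(\tfrac{1}{n}\sum_i a_i^2\bm{1}_{\abs{a_i}>M}\bigr)\cdot \E\bigl[\tfrac{1}{n}\sum_i\xi_i^2\bigr]$. Under the stated hypotheses you only have $\tfrac{1}{n}\sum_i\xi_i^2\to_{P_\xi}\tau^2$, not convergence (or even boundedness) in $L_1$, so $\E\xi_1^2$ need not be finite or bounded in $n$. The fix is immediate: carry out the same variance computation \emph{conditionally} on $\xi$ (exactly as you did for the main term), obtaining
\[
\mathrm{Var}\Bigl(n^{-1/2}\sum_i \tilde a_i^M\,\xi_{\pi(i)}\,\Big|\,\xi\Bigr)\;\le\; \frac{2}{n}\sum_i a_i^2\bm{1}_{\abs{a_i}>M}\cdot\frac{1}{n}\sum_i\xi_i^2,
\]
and then use $\tfrac{1}{n}\sum_i\xi_i^2=\mathcal{O}_{P_\xi}(1)$ together with conditional Chebyshev to conclude that the tail piece is $\mathfrak{o}_{P}(1)$ as $n\to\infty$ followed by $M\to\infty$. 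With this adjustment your sketch is complete.
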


The following result is taken from \cite[Lemma 3]{cheng2010bootstrap} or \cite[pp. 53]{wellner1996bootstrapping}.

\begin{lemma}\label{lem:transfer_probability}
	The following statements are valid.
	\begin{enumerate}
		\item $\Delta_n = \mathcal{O}_{\mathbf{P}}(1)$ if and only if $\Delta_n\equiv \mathcal{O}_{P_\xi}(1)$ in $P_X$-probability.
		\item $\Delta_n = \mathfrak{o}_{\mathbf{P}}(1)$ if and only if $\Delta_n\equiv \mathfrak{o}_{P_\xi}(1)$ in $P_X$-probability.
	\end{enumerate}
\end{lemma}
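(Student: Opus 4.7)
The plan is to prove both equivalences by a direct Fubini-type argument, exploiting the fact that the relevant conditional probabilities are $[0,1]$-valued, for which $L^1(P_X)$-convergence and convergence in $P_X$-probability coincide. Throughout, recall that $\Prob = P_X^\infty\times P_\xi$ is a product measure, so for any measurable event $A_n$ involving $\Delta_n$,
\begin{align*}
\Prob(A_n)=\E_{P_X}\bigl[\Prob_{W|X}(A_n)\bigr].
\end{align*}
Also recall the standard characterization: a sequence of real random variables $Y_n$ satisfies $Y_n=\mathcal{O}_{\mathbf{P}}(1)$ if and only if $\Prob(\abs{Y_n}>L_n)\to 0$ for every deterministic sequence $L_n\to\infty$. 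This is proved by a routine diagonal/contradiction argument (take a subsequence along which tightness fails, then construct $L_n\to\infty$ witnessing the failure).

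First I would dispatch part (2). For the direction ``$\Delta_n=\mathfrak{o}_{\mathbf{P}}(1)\Rightarrow \Delta_n=\mathfrak{o}_{P_\xi}(1)$ in $P_X$-probability'', fix $\epsilon>0$ and note that the nonnegative bounded $[0,1]$-valued random variables $Z_n(\omega)\equiv \Prob_{W|X}(\Delta_n>\epsilon)$ satisfy $\E_{P_X}Z_n=\Prob(\Delta_n>\epsilon)\to 0$; since $Z_n\in[0,1]$, $L^1(P_X)$-convergence to zero is equivalent to convergence to zero in $P_X$-probability, giving $Z_n=\mathfrak{o}_{P_X}(1)$ as desired. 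For the converse, by hypothesis $Z_n\to 0$ in $P_X$-probability; boundedness in $[0,1]$ then yields $\E_{P_X}Z_n\to 0$ by dominated convergence, i.e.\ $\Prob(\Delta_n>\epsilon)\to 0$. Since $\epsilon>0$ was arbitrary, $\Delta_n=\mathfrak{o}_{\mathbf{P}}(1)$.

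Part (1) follows from exactly the same template after substituting the deterministic threshold $\epsilon$ by a sequence $L_n\to\infty$. Using the characterization of $\mathcal{O}_{\mathbf{P}}(1)$ recalled above, both $\Delta_n=\mathcal{O}_{\mathbf{P}}(1)$ and $\Delta_n=\mathcal{O}_{P_\xi}(1)$ in $P_X$-probability can be rephrased as: for every $L_n\to\infty$, the random variables $Z_n^{(L)}\equiv\Prob_{W|X}(\Delta_n>L_n)\in[0,1]$ tend to zero (unconditionally in expectation on the one hand, in $P_X$-probability on the other). The two statements are therefore equivalent by the same bounded-convergence argument.

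The only real subtlety is the quantifier structure in the definition of $\mathcal{O}_{P_\xi}(1)$ in $P_X$-probability, which is phrased via arbitrary $L_n\to\infty$; this is precisely the form needed to dovetail with the ``$L_n\to\infty$'' characterization of unconditional tightness. There is no genuine obstacle beyond bookkeeping: the equivalence is essentially a restatement that, for $[0,1]$-valued sequences, $\E Z_n\to 0\iff Z_n\to 0$ in probability, applied once for each equivalence.
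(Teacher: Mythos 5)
Your proof is correct. Note that the paper does not actually prove this lemma: it is imported by citation from \cite[Lemma 3]{cheng2010bootstrap} and \cite[pp.~53]{wellner1996bootstrapping}, so there is no in-paper argument to compare against; your Fubini-plus-boundedness argument is the standard one underlying those references and makes the statement self-contained. The two essential ingredients are exactly as you identify them: (i) for $[0,1]$-valued variables $Z_n=\Prob_{W|X}(\Delta_n>\epsilon)$ (resp.\ $\Prob_{W|X}(\Delta_n>L_n)$), one has $\E_{P_X} Z_n\to 0$ iff $Z_n\to 0$ in $P_X$-probability, by Markov in one direction and bounded convergence (along subsequences converging a.s.) in the other, with $\E_{P_X}Z_n=\Prob(\Delta_n>\epsilon)$ by the product structure of $\Prob$; and (ii) the sequential characterization of unconditional tightness, $\Delta_n=\mathcal{O}_{\mathbf{P}}(1)$ iff $\Prob(\Delta_n>L_n)\to0$ for every deterministic $L_n\to\infty$, whose only nontrivial direction is the diagonal construction of a witnessing sequence $L_n$ when tightness fails — you flag this correctly, and it dovetails with the paper's definition of $\mathcal{O}_{P_\xi}(1)$ in $P_X$-probability, which is phrased precisely through arbitrary $L_n\to\infty$. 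The only cosmetic remark is that, like the paper's definition, you write $\Delta_n>\epsilon$ without absolute values; this is harmless since the $\Delta_n$ appearing in the applications are nonnegative.
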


\begin{definition}
	A function class $\mathcal{F}$ is \emph{$\alpha$-full} $(0<\alpha<2)$ if and only if there exists some constant $K_1,K_2>1$ such that both
	\begin{align*}
	\log \mathcal{N}\big(\epsilon\pnorm{F}{L_2(\Prob_n)},\mathcal{F},L_2(\Prob_n)\big)\leq K_1\epsilon^{-\alpha},\qquad a.s.
	\end{align*}
	for all $\epsilon>0, n \in \N$, and
	\begin{align*}
	\log \mathcal{N}\big(\sigma\pnorm{F}{L_2(P)}/K_2, \mathcal{F},L_2(P)\big)\geq K_2^{-1}\sigma^{-\alpha}
	\end{align*}
	hold. Here $\sigma^2\equiv \sup_{f \in \mathcal{F}}Pf^2$, $F$ denotes the envelope function for $\mathcal{F}$, and $\Prob_n$ is the empirical measure for i.i.d. samples $X_1,\ldots,X_n$ with law $P$.
\end{definition}

The following result is taken from \cite[Theorem 3.4]{gine2006concentration}.

\begin{lemma}\label{lem:gine_koltchinskii_matching_bound_ep}
	Suppose that $\mathcal{F}\subset L_\infty(1)$ is $\alpha$-full with $\sigma^2\equiv \sup_{f \in \mathcal{F}}Pf^2$. If $
	n\sigma^2\gtrsim_{\alpha} 1$ and $\sqrt{n} \sigma \left({\pnorm{F}{L_2(P)}}/{\sigma}\right)^{\alpha/2}\gtrsim_{\alpha} 1$,
	then there exists some constant $K>0$ depending only on $\alpha,K_1,K_2$ such that
	\begin{align*}
	K^{-1} \sqrt{n} \sigma \bigg(\frac{\pnorm{F}{L_2(P)}}{\sigma}\bigg)^{\alpha/2}\leq \E \biggpnorm{\sum_{i=1}^n \epsilon_if(X_i)}{\mathcal{F}}\leq K\sqrt{n} \sigma \bigg(\frac{\pnorm{F}{L_2(P)}}{\sigma}\bigg)^{\alpha/2}.
	\end{align*}
\end{lemma}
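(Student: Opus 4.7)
The strategy for proving Lemma \ref{lem:gine_koltchinskii_matching_bound_ep} is to sandwich $\E\pnorm{\sum_{i=1}^n \epsilon_i f(X_i)}{\mathcal{F}}$ between matching upper and lower bounds by pairing the two halves of the $\alpha$-full hypothesis with, respectively, Dudley's entropy integral and Sudakov minoration. Both halves require passing between the empirical metric $L_2(\Prob_n)$ and the population metric $L_2(P)$, and the two numerical conditions in the hypothesis are exactly what makes those transfers go through.

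For the upper bound I would first condition on $X_1,\ldots,X_n$ and apply Dudley's chaining to the Rademacher process, obtaining
\[
\E_\epsilon \biggpnorm{\sum_{i=1}^n \epsilon_i f(X_i)}{\mathcal{F}} \lesssim \sqrt{n}\int_0^{\sigma_n}\sqrt{\log \mathcal{N}(\delta,\mathcal{F},L_2(\Prob_n))}\,\d{\delta}
\]
with $\sigma_n^2 \equiv \sup_{f\in\mathcal{F}}\Prob_n f^2$. Feeding in the upper half of $\alpha$-fullness, $\log\mathcal{N}(\delta,\mathcal{F},L_2(\Prob_n)) \leq K_1(\delta/\pnorm{F}{L_2(\Prob_n)})^{-\alpha}$, and integrating (using $\alpha<2$) collapses the right-hand side to $C_\alpha\sqrt{n}\,\pnorm{F}{L_2(\Prob_n)}^{\alpha/2}\sigma_n^{1-\alpha/2}$. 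Taking expectations, Jensen handles the $\pnorm{F}{L_2(\Prob_n)}^{\alpha/2}$ factor, while Talagrand's concentration inequality applied to the uniformly bounded squared class $\{f^2:f\in\mathcal{F}\}$ yields $\E\sigma_n^{1-\alpha/2}\lesssim\sigma^{1-\alpha/2}$ provided $n\sigma^2\gtrsim_\alpha 1$, which is exactly the first numerical hypothesis.

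For the lower bound I would use a Sudakov-type minoration for the Rademacher process, obtainable for uniformly bounded classes by Khintchine--Kahane comparison with the Gaussian process $\sum_i g_i f(X_i)$ and classical Sudakov for Gaussians: for every admissible scale $\delta\lesssim\sigma_n$,
\[
\E\biggpnorm{\sum_{i=1}^n \epsilon_i f(X_i)}{\mathcal{F}} \gtrsim \sqrt{n}\,\delta\,\sqrt{\log\mathcal{N}(\delta,\mathcal{F},L_2(\Prob_n))}.
\]
Evaluated at the critical scale $\delta\asymp\sigma$ and combined with the lower half of $\alpha$-fullness, this produces the target order $\sqrt{n}\sigma\bigl(\pnorm{F}{L_2(P)}/\sigma\bigr)^{\alpha/2}$, provided the entropy lower bound can be transferred from $L_2(P)$ to $L_2(\Prob_n)$. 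I would effect this transfer by taking a maximal $\delta$-packing $\{f_j\}$ in $L_2(P)$ and observing that it remains a $(\delta/2)$-packing in $L_2(\Prob_n)$ as long as $\sup_{f,g\in\mathcal{F}}|(\Prob_n-P)(f-g)^2|$ is small compared with $\delta^2$; this is yet another Talagrand application to the squared differences, controlled by the Dudley bound already produced in step one.

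The main obstacle I anticipate is exactly these two empirical-to-population transfers: controlling $\sigma_n$ in terms of $\sigma$ for the upper bound, and showing that a large $L_2(P)$-packing survives in the empirical geometry for the lower bound. Both reduce to showing that the uniform error of the law of large numbers on $\{f^2:f\in\mathcal{F}\}$ is $o(\sigma^2)$, and the two calibration conditions $n\sigma^2\gtrsim_\alpha 1$ and $\sqrt{n}\sigma(\pnorm{F}{L_2(P)}/\sigma)^{\alpha/2}\gtrsim_\alpha 1$ are precisely tuned so that the Bernstein variance term dominates in the first and the Dudley main term dominates any additive $O(1)$ noise in the second.
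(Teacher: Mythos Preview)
The paper does not prove this lemma at all: it is quoted verbatim as \cite[Theorem 3.4]{gine2006concentration} and used as a black box. So there is no ``paper's own proof'' to compare against; your sketch is essentially a reconstruction of the Gin\'e--Koltchinskii argument itself.

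That said, your outline is largely on target for the upper bound (conditional Dudley plus concentration for $\sigma_n$), but the lower-bound justification has a gap. You invoke ``Sudakov-type minoration for the Rademacher process, obtainable \ldots by Khintchine--Kahane comparison with the Gaussian process,'' but the standard comparison runs the wrong way for this purpose: one has $\E\pnorm{\sum_i \epsilon_i a_i}{}\le \sqrt{\pi/2}\,\E\pnorm{\sum_i g_i a_i}{}$, whereas the reverse inequality in general costs a factor $\E\max_i|g_i|\asymp\sqrt{\log n}$ (via contraction after writing $g_i=|g_i|\epsilon_i$), which would spoil the constant-free lower bound. Khintchine--Kahane, moreover, compares $L_p$ and $L_q$ moments of the \emph{same} random sum, not Rademacher versus Gaussian. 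A correct route is either a direct Sudakov-type inequality for Rademacher averages over bounded sets (as in Ledoux--Talagrand, or the Pajor--Tomczak-Jaegermann dual Sudakov argument), or the desymmetrization/second-moment argument that Gin\'e and Koltchinskii actually use on a well-separated subset transferred from $L_2(P)$ to $L_2(\Prob_n)$. Your transfer step (showing a large $L_2(P)$-packing survives empirically via Talagrand on squared differences) is fine; it is only the minoration input that needs to be sourced correctly.
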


\section*{Acknowledgements}
The author would like to thank Jon Wellner for encouragement and helpful comments on an earlier version of the paper. He would also like to thank anonymous referees for detailed comments and suggestions that significantly improved the article. 

\bibliographystyle{amsalpha}
\bibliography{mybib}

\end{document}